\newtheorem{thmm}{Theorem}
\newtheorem*{TheoremD'}{Theorem D'}
\newtheorem*{TheoremE'}{Theorem E'}
\newtheorem{theorem}{Theorem}[section]
\newtheorem{corollary}[theorem]{Corollary}
\newtheorem*{main*}{Main Theorem}
\newtheorem{lemma}[theorem]{Lemma}
\newtheorem{proposition}[theorem]{Proposition}
\theoremstyle{definition}
\newtheorem{definition}[theorem]{Definition}
\newtheorem{remark}[theorem]{Remark}
\newcommand{\F}{\mathcal{F}}
\def\pX{{\partial X}}
\def\RR{{\mathbb R}}
\def\NN{{\mathbb N}}
\def\ZZ{{\mathbb Z}}
\def\inj{{\text{inj}}}
\def\lv{{\underline v}}
\def\lc{{\underline c}}
\def\lw{{\underline w}}
\def\lB{{\underline B}}
\def\lS{{\underline S}}
\def\lm{{\underline m}}
\def\pr{{text{pr}}}
\def\diam{\mathop{\hbox{{\rm diam}}}}
\def\diam{\mathop{\hbox{{\rm diam}}}}
\def\pr{\mathop{\hbox{{\rm pr}}}}
\def\loc{{\mathop{\hbox{\footnotesize  \rm loc}}}}
\def\top{{\mathop{\hbox{\footnotesize \rm top}}}}
\def\a{\alpha}
\def\b{\beta}
\def\c{\gamma}   \def\C{\Gamma}
\def\d{\delta}   
 \def\e{\epsilon}
\def\ae{\text{-a.e.}\ }
\def\bP{\textbf{P}}
\def\bF{\textbf{F}}
\title[Running heading with forty characters or less]
      {Counting closed geodesics on rank one manifolds without focal points}
\author[first-name1 last-name1 and first-name2 last-name2]{Weisheng Wu}
\subjclass{}
 \keywords{}
\address{School of Mathematical Sciences, Xiamen University, Xiamen, 361005, P.R. China}
\email{wuweisheng@math.pku.edu.cn}
\begin{document}

\maketitle
\markboth{Counting closed geodesics on rank one manifolds without focal points}
{W. Wu}
\renewcommand{\sectionmark}[1]{}

\begin{abstract}
In this article, we consider a closed rank one Riemannian manifold $M$ without focal points. Let $P(t)$ be the set of free-homotopy classes containing a closed geodesic on $M$ with length at most $t$, and $\# P(t)$ its cardinality. We obtain the following Margulis-type asymptotic estimates:
\[\lim_{t\to \infty}\#P(t)/\frac{e^{ht}}{ht}=1\]
where $h$ is the topological entropy of the geodesic flow. In the appendix, we also show that the unique measure of maximal entropy of the geodesic flow has the Bernoulli property.
\end{abstract}

\section{Introduction}
Consider a closed Riemannian manifold $(M,g)$. When $M$ has negative sectional curvature everywhere, the geodesic flow defined on the unit tangent bundle $SM$ is a prime example of Anosov flow (cf. \cite{An}, \cite[Section 17.6]{KH}). There is correspondence between closed geodesics on $M$ and periodic orbits under the geodesic flow, and they are important from both geometric and dynamical points of view. The asymptotic growth of the number of closed geodesics is given by the celebrated theorem of Margulis in his 1969 thesis \cite{Mar1, Mar2}:
\begin{equation}\label{e:mar}
\begin{aligned}
\lim_{t\to \infty}\#P(t)/\frac{e^{ht}}{ht}=1
\end{aligned}
\end{equation}
where $P(t)$ is the set of free-homotopy classes containing a closed geodesic with length at most $t$, and $h$ is the topological entropy of the geodesic flow.

The main ingredient in the proof of Margulis's theorem is from ergodic theory. Margulis \cite{Mar2} constructed an invariant measure under the geodesic flow. This measure is mixing, and the conditional measures on stable/unstable manifolds contract/expand with a uniform rate under the flow. Using different methods, Bowen constructed a measure of maximal entropy (MME for short) in \cite{Bo1}, for Axiom A flows. Later in \cite{Bo2}, Bowen proved that MME is unique. It turns out that the measures constructed by Bowen and Margulis are eventually the same one, which is called \emph{Bowen-Margulis measure}. Bowen-Margulis measure has the Bernoulli property \cite{Rat}, an ultimate mixing property for measurable dynamical systems.

An alternative approach to \eqref{e:mar} for Axiom A flows is given by Parry and Pollicott \cite{PP}, based on zeta functions and symbolic dynamics. See the survey by Sharp in \cite{Mar2} for further progress that has been made since Margulis's thesis.

In 1984, A.~Katok \cite{BuKa} conjectured that the geodesic flow on a closed rank one manifold of nonpositive curvature admits a unique MME. On such manifolds, the geodesic flow exhibits nonuniformly hyperbolic behavior. In 1998, Katok's conjecture was finally settled by Knieper \cite{Kn2}. In his proof, Knieper used Patterson-Sullivan measures on the boundary at infinity of the universal cover of $M$ to construct a MME (called \emph{Knieper measure}), and showed that this measure is the unique MME. Recently, Burns, Fisher, Climenhaga and Thompson \cite{BCFT} proved the uniqueness of equilibrium states for certain potentials, which generalized Knieper's result.

Knieper's measure can be used to obtain the following asymptotic estimates \cite{Kn1, Kn3}: there exists $C>0$ such that for $t$ large enough
$$\frac{1}{C}\frac{e^{ht}}{t}\le \#P(t)\le C \frac{e^{ht}}{t}.$$
Since then, efforts are made to improve the above to the Margulis-type asymptotic estimates \eqref{e:mar}. A preprint \cite{Gun}, though not published, contains many inspiring ideas to this problem. A breakthrough was made recently by Ricks \cite{Ri}: \eqref{e:mar} is established for rank one locally CAT$(0)$ spaces, which include rank one manifolds with nonpositve curvature.

Beyond nonpositive curvature, manifolds without focal/conjugate points are studied extensively. Following Knieper's method, Liu, Wang and the author construct the Knieper measure for manifolds without focal points and showed it is
the unique MME \cite{LWW}. Gelfert and Ruggiero \cite{GR} also proved the uniqueness of MME for surfaces without focal points
by different methods. Extending the approach in \cite{BCFT}, Chen, Kao and Park proved the uniqueness of equilibrium states for certain potentials in \cite{CKP1, CKP2}. Manifolds without conjugate points in general are far from being well understood. Remarkable progress was made by Climenhaga, Knieper and War for a class of manifolds (including all surfaces) without conjugate points: the uniqueness of MME is proved in \cite{CKW1} and \eqref{e:mar} is obtained in \cite{CKW2}. Weaver \cite{We} also obtained \eqref{e:mar} for surfaces having negative curvature outside of a collection of radially symmetric caps.

In this paper, we obtain Margulis-type asymptotic estimates \eqref{e:mar} for manifolds without focal points, by combining ideas from \cite{Ri} and \cite{CKW2}. Compared to rank one locally CAT$(0)$ or manifolds of nonpositive curvature in \cite{Ri}, the lack of convexity properties causes some difficulties, and some rigidity and comparison results do not hold any more (see Remarks \ref{lack}, \ref{fact1} and \ref{fact3} below). We will provide full account of details on the angle metric, weak $\pi$-convergence theorem, continuity of Busemann functions in the setting of no focal points. We also adopt the new ideas in \cite{CKW2}: for example, the slice instead of full box is used in the intersection $S\cap \phi^{-t} \c B$ (see Section 2.4 for notations).

As we are dealing with manifolds, we will follow more closely the terminology in \cite{CKW2} than \cite{Ri}. Nevertheless, our setting is quite different from that in \cite{CKW2}. For example, manifolds without focal points do not have uniform visibility property in general, and thus we have to develop weak $\pi$-convergence theorem to prove a type of closing lemma. Moreover in \cite{CKW2}, by Morse Lemma the action induced by a deck transformation on the boundary at infinity has a unique repelling/attracting fixed point (\cite[Remark 2.6]{CKW2}). In our setting, the action has no such property and its behaviour will be analyzed using the properties of no focal points.

In the appendix, we show that the Knieper measure for manifolds without focal points has the Bernoulli property. In nonpositive curvature case, the Knieper measure is proved to be mixing in \cite{Ba} and Bernoulli in \cite{CT}. For manifolds without focal points, Knieper measure is known to be mixing by \cite{LLW} and have the Kolmogorov property by \cite{CKP2}. The method of Call and Thompson \cite{CT}, actually proves the Kolmogorov property of the unique equilibrium states for certain potentials \cite{CT,CKP2}. For Knieper measure, to lift the Kolmogorov property to the Bernoulli property, we adapt the classical argument in \cite{OW1, Rat} for Anosov flows. In fact, we follow closely \cite{CH}, which is for systems with nonuniformly hyperbolic structure.

\subsection{Statement of main results}
Suppose that $(M,g)$ is a $C^{\infty}$ closed $n$-dimensional Riemannian manifold,
where $g$ is a Riemannian metric. Let $\pi: SM\to M$ be the unit tangent bundle over $M$. For each $v\in S_pM$,
we always denote by $c_{v}: \RR\to M$ be the unique geodesic on $M$ satisfying the initial conditions $c_v(0)=p$ and $\dot c_v(0)=v$. The geodesic flow $\phi=(\phi^{t})_{t\in\mathbb{R}}$ (generated by the Riemannian metric $g$) on $SM$ is defined as:
\[
\phi^{t}: SM \rightarrow SM, \qquad (p,v) \mapsto
(c_{v}(t),\dot c_{v}(t)),\ \ \ \ \forall\ t\in \RR .
\]

A vector field $J(t)$ along a geodesic $c:\RR\to M$ is called a \emph{Jacobi field} if it satisfies the \emph{Jacobi equation}:
\[J''+R(J, \dot c)\dot c=0\]
where $R$ is the Riemannian curvature tensor and\ $'$\ denotes the covariant derivative along $c$.
\begin{definition}
Let $c$ be a geodesic on $(M,g)$.
\begin{enumerate}
  \item A pair of distinct points $p=c(t_{1})$ and $q=c(t_{2})$ are called \emph{focal} if there is a Jacobi field $J$ along $c$ such that $J(t_{1})=0$, $J'(t_{1})\neq 0$ and $\frac{d}{dt}\| J(t)\|^{2}\mid_{t=t_{2}}=0$;
  \item $p=c(t_{1})$ and $q=c(t_{2})$ are called \emph{conjugate} if there is a nontrivial Jacobi field $J$ along $c$ such that $J(t_{1})=0=J(t_{2})$.
\end{enumerate}
A compact Riemannian manifold $(M,g)$ is called a manifold \emph{without focal points/without conjugate points} if there is no focal points/conjugate points on any geodesic in $(M,g)$.
\end{definition}

By definition, if a manifold has no focal points then it has no conjugate points. All manifolds of nonpositive curvature always have no focal points.

A Jocobi field $J(t)$ along a geodesic $c(t)$ is called \emph{parallel} if $J'(t)=0$ for all $t\in \RR$. The notion of \emph{rank} is defined as follows.
\begin{definition}
For each $v \in SM$, we define \text{rank}($v$) to be the dimension of the vector space of parallel Jacobi fields along the geodesic $c_{v}$, and \text{rank}($M$):=$\min\{$\text{rank}$(v) \mid v \in SM\}$. For a geodesic $c$ we define \text{rank}($c$)=\text{rank}($\dot c(t)$), $\forall\ t\in \mathbb{R}$.
\end{definition}

In this paper, we always let $M$ be a rank one closed Riemannian manifold without focal points. Then $SM$ splits into two invariant subsets under the geodesic flow: the regular set $\text{Reg}:= \{v\in SM \mid \text{rank}(v)=1\}$, and the singular set $\text{Sing}:= SM \setminus \text{Reg}$.

Recall that $P(t)$ is the set of free-homotopy classes containing a closed geodesic with length at most $t$. If $M$ has negative curvature, each free-homotopy class contains exactly one closed geodesic, so $\#P(t)$ is just the number of closed geodesics with length at most $t$. If $M$ has nonpositive curvature or without focal points, a free-homotopy class may have infinitely many closed geodesics.

The main result of this paper is the following Margulis-type asymptotic estimates of $\#P(t)$.
\begin{thmm}\label{margulis}
Let $M$ be a rank one closed Riemannian manifold without focal points, then
$$\#P(t)\sim\frac{e^{ht}}{ht}$$
where $h=h_{\text{top}}(g)$ denotes the topological entropy of the geodesic flow on $SM$, and the notation $f_1\sim f_2$ means that $\lim_{t\to\infty}\frac{f_1(t)}{f_2(t)}=1$.
\end{thmm}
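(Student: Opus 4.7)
The plan is to adapt the measure-theoretic counting scheme of Margulis, as refined by Knieper \cite{Kn2,Kn3}, Ricks \cite{Ri} and Climenhaga--Knieper--War \cite{CKW2}, using the unique measure of maximal entropy $m$ (the Knieper measure) on $SM$ constructed in \cite{LWW}. Concretely I would (i) realize $m$ as a Patterson--Sullivan--type measure with local product structure whose strong unstable conditionals scale as $e^{-ht}$ under $\phi^{-t}$; (ii) choose a slice $S$ and a flow box $B\supset S$ near a regular vector, and use mixing of $m$ together with the unstable scaling to compute the asymptotics of $m(S\cap\phi^{-t}\gamma B)$ summed over deck transformations $\gamma$; (iii) establish a closing lemma that matches each contributing term to a closed geodesic whose length lies within $O(1)$ of $t$; and (iv) perform an Abel-type summation over length shells to deduce $\#P(t)\sim e^{ht}/(ht)$.

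Before any counting I would lay down the geometric infrastructure on the universal cover $\widetilde M$. Since the distance function need not be convex without focal points, the angle metric on $\partial\widetilde M$, the continuity of Busemann functions in the base point, the behaviour of horospheres and of the stable/unstable foliations, and the flat-strip characterisation of $\mathrm{Sing}$ must all be proved directly rather than quoted from CAT$(0)$ theory. The crucial substitute for uniform visibility is a \emph{weak $\pi$-convergence theorem}: if $p_n\to\xi$ and $q_n\to\eta$ in $\partial\widetilde M$ with $\angle(\xi,\eta)\ge\pi$, then the geodesic segments $[p_n,q_n]$ eventually pass within any prescribed distance of any fixed base point. This theorem will underpin both the closing lemma and the analysis of the deck-transformation action on $\partial\widetilde M$, replacing the Morse-lemma arguments of \cite{CKW2} whose output (unique attracting/repelling fixed point on $\partial\widetilde M$) is not available in our setting.

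With the boundary geometry and Patterson--Sullivan measures $\{\mu_x\}$ on $\partial\widetilde M$ in place, the Knieper measure $m$ admits a local product structure $m|_B\asymp d\mu^u\times d\mu^s\times dt$ on a flow box centred at a regular vector, with $(\phi^t)_*\mu^u=e^{-ht}\mu^u$. Following the key innovation of \cite{CKW2}, I would take a slice $S\subset B$ thickened only in the strong stable and flow directions rather than a full box, so that the intersection $S\cap\phi^{-t}\gamma B$ admits a clean geometric interpretation in terms of near-returns; mixing of $m$ then yields the renewal-type estimate for this quantity. The closing lemma, driven by weak $\pi$-convergence and the continuity of Busemann functions, identifies each near-return with a unique closed geodesic of length $t+O(1)$ in a single free-homotopy class, and summing over length shells $[k,k+1]$ with an Abel summation produces the sharp Margulis asymptotic $\#P(t)\sim e^{ht}/(ht)$.

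The principal obstacle, and the site of the bulk of the technical effort, will be the closing lemma together with the bookkeeping that converts the measure-theoretic estimate into an honest count of distinct free-homotopy classes. Without convexity of distance or uniform visibility one cannot invoke hyperbolic shadowing off the shelf; one must combine the weak $\pi$-convergence theorem with a direct analysis, using the no-focal-points hypothesis, of the dynamics that the relevant deck transformation induces on $\partial\widetilde M$ in the absence of unique attracting/repelling fixed points. A secondary issue is to ensure that the singular set $\mathrm{Sing}$, while of $m$-measure zero by the entropy gap established in \cite{LWW}, contributes negligibly to the count uniformly in $t$, so that the mixing estimate -- which only sees regular orbits -- indeed matches the geometric count of \emph{all} closed geodesics of length $\le t$.
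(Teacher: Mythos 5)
Your outline follows essentially the same route as the paper: the Knieper measure with its Patterson--Sullivan product structure and $e^{-ht}$ unstable scaling, a slice-in-box intersection $S\cap\phi^{-t}\gamma B$ estimated via mixing, a closing lemma driven by a weak $\pi$-convergence theorem, multiplicity and free-homotopy bookkeeping via the flat strip lemma, and a final Riemann-sum over length shells. The one caveat is that the weak $\pi$-convergence theorem actually used is a statement about the boundary action of deck transformations (if $\gamma_i x\to p\in\bF_\rho$ and $\gamma_i^{-1}x\to n\in\bP_\rho$, then $\gamma_i\bF_\theta$ is eventually contained in any neighborhood of $\bF_\rho$), not the uniform-visibility-type statement you formulate, which is precisely the property the paper points out is unavailable for general manifolds without focal points.
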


Recall that a $\phi$-invariant probability measure $\mu$ is called the \emph{measure of maximal entropy} (MME for short) if $h_{\mu}(\phi)\geq h_{\nu}(\phi)$ for any $\phi$-invariant probability measure $\nu$. By the variational principle, $h_{\mu}(\phi)=h$. It is shown in \cite{LLW} that MME is unique and can be constructed via Patterson-Sullivan measures. The unique MME is called \emph{Knieper measure} (see Section 2.3 for more details).

The following is a result of equidistribution of periodic orbits with respect to the Knieper measure.

\begin{thmm}\label{equi}
Suppose that $M$ is a rank one closed Riemannian manifold without focal points, and $\e\in (0, \inj (M)/2$ is fixed where $\inj (M)$ is the injectivity radius of $M$.
For $t > 0$, let $C(t)$ be any maximal set of pairwise non-free-homotopic
closed geodesics with lengths in $(t-\e, t]$, and define the measure
$$\nu_t:=\frac{1}{C(t)}\sum_{c\in C(t)}\frac{Leb_c}{t}$$
where $Leb_c$ is the Lebesgue measure along the curve $\dot c$ in the unit
tangent bundle $SM$.

Then the measures $\nu_t$ converge in the
weak$^*$ topology to the unique measure of maximal entropy as $t\to \infty$.
\end{thmm}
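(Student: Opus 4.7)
The plan is to prove Theorem \ref{equi} by reducing it to a local counting asymptotic in small flow boxes and then approximating continuous test functions, following the Margulis scheme as adapted to the nonuniformly hyperbolic setting by Ricks~\cite{Ri} and by Climenhaga--Knieper--War~\cite{CKW2}. Fix a regular vector $v\in\text{Reg}$, a small radius $r>0$, and form a local product set $S\subset SM$ built from pieces of weak-unstable and strong-stable leaves through $v$; put $\B := S\cdot\phi^{[-\epsilon,\epsilon]}$. The core local statement to establish is
$$\#\{c\in C(t) : \dot c\cap\B\ne\emptyset\}\;\sim\;\mu(\B)\cdot\frac{e^{ht}}{ht}\qquad(t\to\infty),$$
where $\mu$ is the Knieper measure of \cite{LWW,LLW}. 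Given this uniformly over a sufficiently fine cover by such boxes with $\mu$-null boundaries, Theorem \ref{equi} follows from the standard sandwich argument: any continuous $f$ is squeezed between finite linear combinations of indicators of these boxes, and the local count forces $\int f\,d\nu_t\to\int f\,d\mu$.

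The proof of the local asymptotic proceeds by identifying closed geodesics of length in $(t-\epsilon,t]$ that meet $\B$ with vectors in the dynamical intersection $S\cap\phi^{-t}\B$. The Knieper measure admits a local product structure on $\B$ coming from the Patterson--Sullivan measures on $\partial\widetilde M$ constructed in \cite{LWW}, and the conditional measures on weak-unstable slices scale covariantly by $e^{ht}$ under $\phi^t$. Combining this product-and-scaling picture with mixing of $\mu$ (from \cite{LLW}) and the slice-in-place-of-box refinement advertised in the introduction (taken from \cite{CKW2}) yields, for each small $\epsilon>0$, an asymptotic of the form $\mu(S\cap\phi^{-t}\B)\sim c(S,\epsilon)\cdot\mu(\B)\cdot e^{-ht}$. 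The closing lemma, developed earlier in the paper, then converts each ``return vector'' in $S\cap\phi^{-t}\B$ into the axis of a genuine closed geodesic, associating to it a unique free-homotopy class; summing over $t$-windows of size $\epsilon$ and dividing by $t$ produces the asserted count.

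The hardest step will be the closing lemma in the no-focal-points setting, for which neither the CAT$(0)$ convexity exploited in \cite{Ri} nor the Morse Lemma used in \cite{CKW2} (see \cite[Remark 2.6]{CKW2}) is available. The axis must instead be produced via the weak $\pi$-convergence theorem developed in the paper, relying only on continuity of Busemann functions and the angle metric on $\partial\widetilde M$, and the action of deck transformations on $\partial\widetilde M$ must be analyzed despite the possible presence of flat strips carrying multiple attracting or repelling fixed points. A secondary obstacle is overcounting: flat strips can contain several closed geodesics in one free-homotopy class, and purely singular periodic orbits could in principle spoil equidistribution. These are controlled using $\mu(\text{Reg})=1$, the entropy gap $h_{\text{top}}(\phi|_{\text{Sing}})<h$ from \cite{LWW,LLW}, and the choice $\epsilon<\inj(M)/2$, which ensures the closing lemma yields a unique axis per free-homotopy class inside $\B$. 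Finally, Theorem \ref{margulis} drops out as the special case $f\equiv 1$ after summing the local asymptotic over a partition.
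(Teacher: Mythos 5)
Your plan is the classical Margulis equidistribution scheme: establish a local counting asymptotic in each small product flow box and then sandwich continuous test functions between linear combinations of box indicators. This is a genuinely different route from the one the paper takes, and as sketched it has two concrete gaps. First, the sandwich argument needs the local asymptotic for a family of boxes fine enough to approximate arbitrary continuous functions and covering $SM$ up to small $\nu_t$-mass; but product flow boxes exist only around regular vectors (and the construction in Section 2.4 further requires the reference vector $v_0$ to be periodic and regular), so no such box covers a neighborhood of $\mathrm{Sing}$. You would have to prove separately that $\limsup_{t}\nu_t(U)\le \mu(U)$ for small neighborhoods $U$ of $\mathrm{Sing}$, and appealing to $\mu(\mathrm{Reg})=1$ together with an entropy gap only bounds the number of closed geodesics lying entirely near $\mathrm{Sing}$ --- it does not by itself control the proportion of time that long regular closed geodesics spend near $\mathrm{Sing}$. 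Second, your core local statement counts geodesics meeting $\B$ rather than the time they spend there; equidistribution requires $\sum_{c\in C(t)}\mathrm{Leb}_c(\B)\sim t\,\mu(\B)\,\#C(t)$, and converting a crossing count into a time integral needs each crossing to contribute depth $\approx\a$ with controlled multiplicity, which as stated is not addressed (and the normalization $\mu(\B)e^{ht}/(ht)$ does not match a box of depth $\a$).

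The paper's proof sidesteps all of this. It establishes only the lower bound $\#C(t)\ge \frac{\a}{t\nu_t(\lB^\a)}(\#\C^*(t-2\e^2,\a)-Ke^{\frac23 ht})$ from the single box around $v_0$ (Propositions \ref{lower1} and \ref{asymptotic}), which gives $\liminf_{t\to\infty}\frac1t\log\#C(t)\ge h$. It then observes that $\{\dot{\underline c}(0):\underline c\in C(t)\}$ is $(t,\e)$-separated: if two non-free-homotopic closed geodesics in $C(t)$ stayed within $\e<\inj(M)/2$ of each other for all times in $[0,t]$, their lifts would be axes of the same deck transformation, hence bi-asymptotic, hence would bound a flat strip by Lemma \ref{nonincreasing}(4), contradicting that they are not free-homotopic. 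Lemma \ref{equilemma} then shows that every weak$^*$ limit of $\nu_t$ has entropy at least $h$, and uniqueness of the measure of maximal entropy forces that limit to be $m$. In other words, the full local asymptotic over a cover --- the hardest part of your plan --- is never needed; uniqueness of the MME does that work. If you wish to pursue your route you must supply the two missing ingredients above; otherwise the entropy-lemma shortcut is the intended argument.
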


In Section $2$, we recall some geometric and ergodic background of manifolds without foal points, including Busemann functions, construction of the Knieper measure and local product flow boxes. All Sections $3-6$ are devoted to proving Theorem \ref{margulis}. Meanwhile, Theorem \ref{equi} is proved in Section $6$.

In the appendix, we prove the following ultimate mixing property for the Knieper measure. We also recall the necessary definitions of Kolmogorov/Bernoulli properties and known results from the literature.
\begin{thmm}\label{bernoulli}
Let $M$ be a rank one closed Riemannian manifold without focal points, then
the unique measure of maximal entropy is Bernoulli.
\end{thmm}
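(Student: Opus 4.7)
The plan is to upgrade the Kolmogorov property of the Knieper measure $\mu$, known from \cite{CKP2}, to the full Bernoulli property, following the classical scheme of Ornstein--Weiss \cite{OW1} and Ratner \cite{Rat} as adapted to nonuniformly hyperbolic flows in \cite{CH}. The argument splits into three reductions. First, by a theorem of Ornstein--Weiss, a $K$-flow whose time-one map is Bernoulli is itself a Bernoulli flow; hence it suffices to prove that $(\phi^{1},\mu)$ is Bernoulli. Second, by Ornstein's isomorphism theorem, a Kolmogorov automorphism is Bernoulli provided it admits a generating partition satisfying the \emph{very weak Bernoulli} (VWB) condition. Since $(\phi^{1},\mu)$ inherits the Kolmogorov property from the flow, the task reduces to constructing such a partition and verifying the VWB matching estimate.

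To construct the partition, I would work on the regular set $\text{Reg}$, which carries full Knieper measure; the inputs here are the entropy gap $h_{\text{top}}(\text{Sing})<h$ in the rank one no-focal-point setting and the Patterson--Sullivan description of $\mu$ recalled in Section 2.3. Fixing a Pesin block $\Lambda_{\ell}\subset \text{Reg}$ on which local stable and unstable manifolds $W^{s}_{\loc}(v)$, $W^{u}_{\loc}(v)$ have uniform size and the stable holonomy is absolutely continuous, the Patterson--Sullivan conditionals on $W^{u}$ combined with the Lebesgue measure along the flow direction and a transverse measure on $W^{s}$ yield a local product representation of $\mu$ on a set of positive measure. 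A finite measurable partition $\mathcal{P}$ of $SM$ whose atoms are rectangles adapted to this product structure, with boundaries transverse to the flow and with sufficiently small diameters, can be arranged to be generating; this is the Sinai--Bowen--Chernov construction as implemented in \cite{CH}. To verify VWB, one shows that for most pairs $(v,w)$ lying on a common unstable manifold, their $\mathcal{P}$-names under $\phi^{1}$ are close in the Ornstein $\bar d$-metric; this uses the forward contraction of $W^{s}_{\loc}$, backward contraction of $W^{u}_{\loc}$, and absolute continuity of the stable holonomy to push the unstable matchings to a full $\mu$-measure set of orbits.

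The main obstacle will be the nonuniform hyperbolicity on $\text{Reg}$: stable and unstable manifold sizes, expansion rates, and holonomy distortions all degenerate as one approaches $\text{Sing}$, and the usual comparison/convexity arguments available under nonpositive curvature fail (cf.\ Remarks \ref{lack}, \ref{fact1}, \ref{fact3}). Two ingredients mitigate this. First, the strict entropy gap forces $\mu(\text{Sing})=0$, so a small neighborhood of $\text{Sing}$ can be ignored at arbitrarily small $\mu$-cost when verifying the VWB estimate. Second, the Pesin-block formalism adapted to geodesic flows on manifolds without focal points (as developed in \cite{LWW, CKP2}) provides uniform stable/unstable manifold sizes and absolute continuity on each $\Lambda_{\ell}$, which together with the Patterson--Sullivan conditionals supplies the quantitative inputs that the argument of \cite{CH} requires. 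The remaining work, once these geometric facts are in place, is essentially bookkeeping: choose $\ell$ so that $\mu(\Lambda_{\ell})$ is close to $1$, build the rectangle partition on a Rokhlin tower over a cross-section inside $\Lambda_{\ell}$, and apply the $\bar d$-matching argument of \cite{OW1, Rat, CH} verbatim.
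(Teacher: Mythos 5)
Your proposal follows essentially the same route as the paper: start from the Kolmogorov property of \cite{CKP2}, and upgrade it via the Chernov--Haskell adaptation of the Ornstein--Weiss/Ratner VWB argument, with the key technical input being rectangles on Pesin blocks whose product measure (built from the Patterson--Sullivan conditionals on unstables, Lebesgue in the flow direction, and the weak-stable factor) is comparable to the Knieper measure. The paper packages this input as the existence of Chernov--Haskell $\e$-regular coverings and verifies it by an explicit computation of the weak-stable holonomy Jacobian from the continuous Gromov-product density $e^{h\beta_p(\xi,\eta)}$, which is exactly the quantitative holonomy control you invoke.
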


\section{Geometric and ergodic background}
In this section, we present some geometric and ergodic results on rank one manifolds without focal points, which will be used in the following discussions.

\subsection{Geometric properties and boundary at infinity}
Let $M$ be a closed Riemannian manifold without focal points, and $\pr: X\to M$ the universal cover of $M$. Let $\C \simeq \pi_1(M)$ be the group of deck transformations on $X$, so that each $\c\in \C$ acts isometrically on $X$. Since $M=X/\Gamma$ is compact, each $\c\in \Gamma$ is \emph{axial} (cf. \cite{CS} Lemma 2.1), that is, there exists a geodesic $c$ and $t_{0}>0$ such that $\c(c(t)) = c(t+t_{0})$ for every $t\in \mathbb{R}$. Correspondingly $c$ is called an \emph{axis} of $\c$ and we denote $|\c|:=t_0$ where $t_0$ is minimal with the above property.

We still denote by $\pr: SX\to SM$ and $\c: SX\to SX$ the map on unit tangent bundles induced by $\pr$ and $\c\in \C$. From now on, we use an underline to denote objects in $M$ and $SM$, e.g. for a geodesic $c$ in $X$ and $v\in SX$, $\lc:=\pr c$, $\lv:=\pr v$ denote their projections to $M$ and $SM$ respectively.
Denote by $d$ both the distance functions on $M$ and $X$ induced by Riemannian metrics. The Sasaki metrics on $SM$ and $SX$ are also denoted by $d$ if there is no confusion.

Suppose that $c_{1}$ and $c_{2}$ are both geodesics in $X$. We call $c_{1}$ and $c_{2}$ are \emph{positively asymptotic} or just \emph{asymptotic} if there is a positive number $C > 0$ such that
\begin{equation}\label{e1}
d(c_{1}(t),c_{2}(t)) \leq C, ~~\forall~ t \geq 0.
\end{equation}
We say $c_{1}$ and $c_{2}$ are \emph{negatively asymptotic} if \eqref{e1} holds for all $t \leq 0$. $c_{1}$ and $c_{2}$ are said to be \emph{bi-asymptotic} or \emph{parallel} if they are both positively and negatively asymptotic.
The relation of positive/negative asymptoticity is an equivalence relation between geodesics on $X$. The class of geodesics that are positively/negatively asymptotic to a given geodesic $c_v$ is denoted by $c_v(+\infty)$/$c_v(-\infty)$ or $v^+/v^-$ respectively. We call them \emph{points at infinity}. Obviously, $c_{v}(-\infty)=c_{-v}(+\infty)$. We use $\pX$ to denote the set of all points at infinity,
and call it the \emph{boundary at infinity}, or the \emph{ideal boundary}. If $\eta=v^+\in \pX$, we say $v$ \emph{points at $\eta.$}

The following are the fundamental properties of the geometry of manifolds without focal points.
\begin{lemma}(Cf. \cite{OS})\label{nonincreasing}
Let $X$ be a simply connected closed Riemannian manifold without focal points.
\begin{enumerate}
  \item Let $c_1$ and $c_2$ be distinct geodesics with $c_1(0)=c_2(0)$.
Then for $t>0$, both $d(c_1(t), c_2)$ and $d(c_1(t), c_2(t))$ are strictly increasing and tend to infinity as $t\to \infty$.
  \item Let $c_1$ and $c_2$ be asymptotic geodesics. Then both $d(c_1(t), c_2)$
and $d(c_1(t), c_2(t))$ are nonincreasing functions of $t\in \RR$.
\item For any geodesic $c$ and each $p\in X$, there exists a unique geodesic through $p$ and asymptotic to $c$.
\item (Flat Strip Lemma)  If $c_1$ and $c_2$ are bi-asymptotic, then they bound a flat strip, i.e., there is an isometric embedding $\phi: [0,a]\times \RR\to X$ such that $\phi(0,t)=c_1(t)$ and $\phi(a,t)=c_2(t)$ up to parametrization.
\end{enumerate}
\end{lemma}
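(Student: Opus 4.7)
The strategy is to reduce all four parts to one infinitesimal fact about Jacobi fields. The key input is: if $J$ is a nontrivial Jacobi field along a geodesic in $X$ with $J(t_0)=0$, then by the very definition of focal points the function $\frac{d}{dt}\|J(t)\|^2$ cannot vanish for $t>t_0$; combined with the fact that $f(t)=\|J(t)\|^2$ has a strict minimum at $t_0$ (so $f'$ is positive immediately after $t_0$), this forces $\|J(t)\|$ to be strictly increasing on $(t_0,\infty)$, and unbounded. A complementary fact, proved by comparing with reflected Jacobi fields vanishing at large times $T$ and passing $T\to\infty$, is that if a Jacobi field $J$ remains bounded on $[0,\infty)$ then $\|J\|$ is nonincreasing along the entire geodesic.

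For part (1) I would connect $\dot c_1(0)$ to $\dot c_2(0)$ by a smooth path $v(s)\in S_pX$ and study the geodesic variation $c_s(t)=\exp_p(tv(s))$. Each variation field $J_s(t)=\partial_s c_s(t)$ is a Jacobi field along $c_s$ with $J_s(0)=0$, hence $\|J_s(t)\|$ grows strictly by the above. Because no focal points implies no conjugate points, $\exp_p$ is a diffeomorphism, and pulling the Riemannian metric back to $T_pX$ one reads off $d(c_1(t),c_2(t))$ as the length of the minimizing path in this pulled-back metric between $tv(0)$ and $tv(1)$; its strict monotonicity in $t$ follows from the pointwise strict monotonicity of $\|J_s(t)\|$ for each $s$. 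The distance-to-geodesic quantity $d(c_1(t),c_2)$ is handled by the same interpolation, the unique foot of perpendicular on $c_2$—which exists by the no-conjugate-points hypothesis—playing the role of $c_2(t)$.

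Part (2) uses the same interpolating family $c_s$, but now the variation Jacobi fields $J_s$ are bounded on $[0,\infty)$ by asymptoticity, so $\|J_s(t)\|$ is nonincreasing; integrating in $s$ gives the claim for both $d(c_1(t),c_2(t))$ and $d(c_1(t),c_2)$. For part (3), existence of an asymptotic geodesic through $p$ follows from the usual limiting argument: the unique geodesics from $p$ to $c(n)$ have initial directions in the compact set $S_pX$, and any subsequential limit gives a ray whose asymptoticity to $c$ follows from a routine triangle-inequality estimate. Uniqueness is immediate from part (1): two distinct geodesics from $p$ would have $d(\cdot,\cdot)\to\infty$, so they cannot both remain within bounded distance of $c$.

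The main obstacle is part (4), the Flat Strip Lemma. Applying part (2) and its time-reversed analogue shows that $d(c_1(t),c_2(t))$ is both nonincreasing and nondecreasing, hence constant in $t\in\RR$. Interpolating $c_1$ and $c_2$ through an appropriate family of mutually asymptotic geodesics produces a Jacobi field $J$ along $c_1$ with $\|J(t)\|$ constant in $t$; the constancy at all times, together with no focal points, rigidifies $J$ into a parallel field ($J'\equiv 0$). Parallelism together with the Jacobi equation then gives $R(J,\dot c_1)\dot c_1\equiv 0$, so the sectional curvatures of the two-planes spanned by $J$ and $\dot c_1$ vanish identically, and the map $\phi(s,t)=\exp_{c_1(t)}\bigl(sJ(t)/\|J(0)\|\bigr)$ is an isometric embedding of $[0,a]\times\RR$ whose boundary curves are $c_1$ and (after reparametrization) $c_2$, the latter identification using uniqueness from part (3). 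The delicate step is the passage from constant norm to parallel Jacobi field using only the no-focal-points hypothesis—no convexity of the norm is available a priori—and this is precisely where the no-focal-points assumption (stronger than no conjugate points) is essential.
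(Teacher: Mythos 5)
The paper does not actually prove this lemma---it is imported from the literature via the citation to O'Sullivan---so your proposal has to be measured against the classical proofs rather than against an argument in the text. Your infinitesimal starting point is correct and is the standard one: for a Jacobi field with $J(t_0)=0$ and $J'(t_0)\neq 0$, the function $f=\|J\|^2$ satisfies $f''(t_0)=2\|J'(t_0)\|^2>0$, and the absence of focal points forbids $f'$ from vanishing at any $t\neq t_0$, so $f$ is strictly increasing on $(t_0,\infty)$.

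However, three steps are genuinely gapped. (i) ``Strictly increasing, and unbounded'': unboundedness does not follow from strict monotonicity, and with no sign on the curvature there is no comparison argument producing a lower growth rate for $\|J(t)\|$. That $\|J(t)\|\to\infty$ (equivalently, that $d(c_1(t),c_2(t))\to\infty$) is the \emph{divergence property}; it was left open by O'Sullivan and proved later by Goto by a substantially different argument, and it is the deepest part of item (1). Since your uniqueness argument in (3) and the ``bounded Jacobi fields have nonincreasing norm'' fact underlying (2) and (4) both rest on divergence, this gap propagates through the whole proposal. (ii) In part (1), the quantity $\int_0^1\|J_s(t)\|\,ds$ is the length of the specific transversal curve $s\mapsto\exp_p(tv(s))$, hence only an \emph{upper bound} for $d(c_1(t),c_2(t))$; monotonicity of an upper bound says nothing about monotonicity of the distance itself. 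The standard repair is to take the minimizing geodesic joining $c_1(t_2)$ to $c_2(t_2)$ and project it radially toward $p$ onto the sphere $S(p,t_1)$, using the positive semidefiniteness of the shape operators of geodesic spheres (the geometric content of no focal points) to see that this projection does not increase length; one must also verify the minimizer stays outside $B(p,t_1)$. (iii) In part (3), asymptoticity of a subsequential limit of the segments from $p$ to $c(n)$ is not ``a routine triangle-inequality estimate''---the triangle inequality only yields $d(\gamma(t),c)\le t+d(p,c(0))$, a bound growing in $t$. What is needed is exactly the axiom of asymptoticity, which the paper itself treats as a nontrivial theorem of Pesin (Proposition \ref{axiom}). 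Finally, your outline of (4) follows the standard template, but the step you yourself flag as delicate (constant $\|J\|$ forces $J'\equiv 0$) is left unproved; the usual route is through the stable and unstable Riccati solutions $U^s\le 0\le U^u$, since a Jacobi field bounded on all of $\RR$ must satisfy $J'=U^sJ=U^uJ$, whence $\langle U^sJ,J\rangle=\langle U^uJ,J\rangle=0$ and $J'=0$. As written, the proposal does not yet constitute a proof of items (1) and (3).
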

\begin{remark}\label{lack}
If $X$ has nonpositive curvature, both the functions $d(c_1(t), c_2)$ and $d(c_1(t), c_2(t))$ are convex in $t\in \RR$ for any two geodesics $c_1$ and $c_2$. This is not true for general manifolds without focal points. The lack of convexity causes difficulties in many problems for manifolds without focal points.
\end{remark}

We can define the visual topology on $\pX$ following \cite{Eb1} and \cite{EO}. For each $p$, by Lemma \ref{nonincreasing}, there is a bijection $f_p: S_pX\to \pX$ defined by $f_p(v)=v^+, v\in S_pX$. So for each $p\in M$, $f_p$ induces a topology on $\pX$ from the usual topology on $S_pX$. Given $p,q\in X$, let $\psi: S_pX\to S_qX$ be the map such that $\psi(v)$ is the unique vector in $S_qX$ asymptotic to $v\in S_pX$.
\begin{lemma}(\cite[Lemma 2.6]{Wat})
The map $\psi: S_pX\to S_qX$ above is continuous.
\end{lemma}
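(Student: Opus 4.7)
The plan is to use sequential continuity together with compactness of $S_qX$ and the monotonicity statement in Lemma \ref{nonincreasing}(2). Fix $v\in S_pX$ and let $v_n\to v$ in $S_pX$; set $w_n:=\psi(v_n)\in S_qX$. By compactness, it suffices to show that every convergent subsequence of $(w_n)$ has limit $\psi(v)$. So, after passing to a subsequence, assume $w_n\to w\in S_qX$, and the goal becomes $w=\psi(v)$.

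The key observation is that for each $n$, the geodesics $c_{v_n}$ and $c_{w_n}$ are positively asymptotic, so by Lemma \ref{nonincreasing}(2) the function $t\mapsto d(c_{v_n}(t),c_{w_n}(t))$ is nonincreasing on $\RR$. Evaluating at $t=0$ gives
\[
d\bigl(c_{v_n}(t),c_{w_n}(t)\bigr)\;\le\;d(p,q)\qquad\text{for all }t\ge 0.
\]
Smooth dependence of geodesics on initial conditions yields $c_{v_n}(t)\to c_v(t)$ and $c_{w_n}(t)\to c_w(t)$ pointwise in $t$, so passing to the limit gives $d(c_v(t),c_w(t))\le d(p,q)$ for all $t\ge 0$. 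Hence $c_v$ and $c_w$ are positively asymptotic in the sense of \eqref{e1}.

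Since $c_w(0)=q$, Lemma \ref{nonincreasing}(3) (uniqueness of the geodesic through $q$ asymptotic to $c_v$) forces $c_w=c_{\psi(v)}$, and therefore $w=\dot c_w(0)=\psi(v)$. This shows that every convergent subsequence of $(w_n)$ has limit $\psi(v)$, so $w_n\to\psi(v)$ in $S_qX$, which is the desired continuity.

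The argument is essentially a compactness-plus-monotonicity trick; the only place that needs care is checking that the nonincreasing distance estimate survives the limit, which is immediate from the continuous dependence of the geodesic flow on initial data, together with the fact that the Sasaki topology on $S_pX$ coincides with the topology coming from $v\mapsto(c_v(0),\dot c_v(0))$. No convexity of the distance function is invoked, which matters in view of Remark \ref{lack}: only the nonincreasing behavior supplied by the no-focal-points hypothesis (Lemma \ref{nonincreasing}(2)) and uniqueness of asymptotes (Lemma \ref{nonincreasing}(3)) are used, so the proof works in the present generality.
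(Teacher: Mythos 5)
Your argument is correct. The paper itself gives no proof of this lemma (it is quoted from Watkins), but your route --- sequential continuity via compactness of $S_qX$, the uniform bound $d(c_{v_n}(t),c_{w_n}(t))\le d(p,q)$ for $t\ge 0$ from the nonincreasing property in Lemma \ref{nonincreasing}(2), passage to the limit by continuous dependence on initial conditions, and then uniqueness of the asymptote through $q$ from Lemma \ref{nonincreasing}(3) --- is exactly the standard argument and uses only tools available in the no-focal-points setting.
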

It follows that $\psi$ is in fact a homoeomorphism. Hence the topology on $\pX$ induced by $f_p$ is independent of $p\in X$, and is called the \emph{visual topology} on $\pX$.

The above topology on $\pX$ and the manifold topology on $X$ can be extended to $\overline X:= X\cup \pX$ naturally by requiring the map $\varphi$ defined as follows is a homeomorphism. Fix $p\in X$. For each $v\in T_pX$ with $\|v\|\le 1$, define
\[
\varphi(v):=\begin{cases}\exp (\frac{v}{1-\|v\|}) &
   \text{if} \ \|v\|<1; \\
f_p(v) &\text{if} \ \|v\|=1.
\end{cases}
\]
This topology is usually called the \emph{cone topology}. Under this topology, $\overline{X}$ is homeomorphic to the closed unit ball in $\mathbb{R}^{\text{dim}(X)}$, and $\pX$ is homeomorphic to the unit sphere $\mathbb{S}^{\text{dim}(X)-1}$.

The following continuity property is useful.
\begin{lemma}\label{con}(\cite[Lemma 6.4]{Wat})
Let $p, p_n\in X$ with $p_n\to p$, and $x_n,\zeta\in \overline X$ with $x_n\to \zeta$. Then $\dot c_{p_nx_n}(0)\to \dot c_{p\zeta}(0)$, where $c_{qx}$ is the unique geodesic from $q\in X$ and pointing at $x\in \overline X$.
\end{lemma}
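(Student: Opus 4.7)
The plan is to argue by contradiction and compactness. Set $v_n := \dot c_{p_n x_n}(0)$ and $v := \dot c_{p\zeta}(0)$, and suppose $v_n \not\to v$. Since $p_n \to p$, the unit vectors $v_n$ lie in a precompact subset of $SX$, so after passing to a subsequence we may assume $v_n \to w$ for some $w \in S_pX$ with $w \neq v$. It then suffices to show $w = v$, which I would do case by case depending on whether $\zeta \in X$ or $\zeta \in \pX$.

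The first case, $\zeta \in X$, is straightforward: since $X$ is open in $\overline X$ (cone topology), $x_n \in X$ for $n$ large, and $L_n := d(p_n, x_n) \to L := d(p, \zeta)$. Continuity of the geodesic flow together with $v_n \to w$ and $L_n \to L$ gives $c_w(L) = \lim c_{v_n}(L_n) = \lim x_n = \zeta$, so $c_w|_{[0,L]}$ is a geodesic segment from $p$ to $\zeta$. Since $X$ is simply connected and has no conjugate points, $\exp_p : T_pX \to X$ is a diffeomorphism, and such a segment is unique, forcing $w = v$.

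For $\zeta \in \pX$, the aim is to show $w^+ = \zeta$, whence $w = v$ by Lemma \ref{nonincreasing}(3). For each $n$, let $u_n \in S_pX$ be the unique unit vector at $p$ pointing at $x_n$ (so $u_n = f_p^{-1}(x_n)$ if $x_n \in \pX$, and $u_n = \dot c_{p x_n}(0)$ if $x_n \in X$). The definition of the cone topology via the homeomorphism $\varphi$ based at $p$ yields $u_n \to v$. Now both $c_{u_n}$ starting at $p$ and $c_{v_n}$ starting at $p_n$ point at the same $x_n$; applying the preceding continuity lemma (Watkins) for the asymptote map $S_{p_n}X \to S_pX$ and using $p_n \to p$, one concludes that the vector at $p$ asymptotic to $v_n$ agrees with $u_n$ in the limit, so $w^+ = \lim u_n^+ = \zeta$.

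The main obstacle is this last step. In nonpositive curvature, convexity of $t \mapsto d(c_1(t), c_2(t))$ would give an immediate comparison between the geodesic from $p$ and from $p_n$ pointing at $x_n$; here, that convexity fails (Remark \ref{lack}). One must instead combine the nonincreasing distance between asymptotic geodesics in Lemma \ref{nonincreasing}(2) and the Flat Strip Lemma in Lemma \ref{nonincreasing}(4) with Watkins' continuity of the asymptote map in order to transfer the ``pointing at'' information from base $p_n$ to base $p$. The subcase $x_n \in X$ with $d(p_n, x_n) \to \infty$ is especially delicate, since one cannot directly compare the two minimizing segments and must pass to the visual limit at infinity.
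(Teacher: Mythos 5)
The paper does not prove this lemma at all: it is quoted verbatim from Watkins \cite[Lemma 6.4]{Wat}, so there is no in-paper argument to compare against. Judged on its own terms, your write-up is not yet a proof. The reduction by compactness and the case $\zeta\in X$ are fine (precompactness of $\{v_n\}$, continuity of the exponential map, and uniqueness of the geodesic segment from $p$ to $\zeta$ since $\exp_p$ is a diffeomorphism). The observation that $u_n\to v$ in $S_pX$ also follows correctly from the definition of the cone topology based at $p$. But the entire content of the lemma sits in the step you yourself flag as ``the main obstacle'': showing that the limit $w$ of the vectors $v_n\in S_{p_n}X$ pointing at $x_n$ satisfies $w^+=\zeta$. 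You do not carry this out; you only list tools that ``one must combine'' (Lemma \ref{nonincreasing}(2),(4) and Watkins' continuity of the asymptote map). None of these applies directly: the asymptote map $\psi\colon S_pX\to S_qX$ is continuous for \emph{fixed} base points and only acts on vectors, whereas here the base point varies ($p_n\to p$) and the targets $x_n$ may be finite points, so ``asymptotic to $v_n$'' is not even defined for the relevant objects. Leaving this step as an acknowledged difficulty means the proof is incomplete precisely where it matters.

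The missing ingredient is already in the paper: the \emph{axiom of asymptoticity} (Proposition \ref{axiom}, due to Pesin), which states that if $p_n\to p$, $u_n\to v$ in $SX$ and $t_n\to\infty$, then the geodesic segments from $p_n$ to $c_{u_n}(t_n)$ converge to an asymptote of $c_v$. In the subcase $\zeta\in\pX$ with $x_n\in X$, write $x_n=c_{u_n}(t_n)$ with $u_n\in S_pX$, $t_n=d(p,x_n)\to\infty$; you have already shown $u_n\to v$, so the segments $[p_n,x_n]$, whose initial vectors are $v_n$, converge to an asymptote of $c_v$ based at $p$, which by uniqueness of asymptotes through a point (Lemma \ref{nonincreasing}(3)) is $c_v$ itself; hence $w=v$, contradicting $w\neq v$. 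The remaining subcase $x_n\in\pX$ then follows by a diagonal approximation of each $x_n$ by finite points, again controlled by Lemma \ref{con}'s fixed-endpoint cases and the axiom of asymptoticity. Without invoking something of this strength (or reproducing Watkins' argument), the transfer of the ``pointing at'' information from base $p_n$ to base $p$ does not go through, exactly because the convexity available in nonpositive curvature is absent here.
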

\subsection{Continuity of Busemann function}
For each pair of points $(p,q)\in X \times X$ and each point at infinity $\xi \in \pX$, the \emph{Busemann function based at $\xi$ and normalized by $p$} is
$$b_{\xi}(q,p):=\lim_{t\rightarrow +\infty}\big(d(q,c_{p,\xi}(t))-t\big),$$
where $c_{p,\xi}$ is the unique geodesic from $p$ and pointing at $\xi$.
The Busemann function $b_{\xi}(q,p)$ is well-defined since the function $t \mapsto d(q,c_{p,\xi}(t))-t$
is bounded from above by $d(p,q)$, and decreasing in $t$ (this can be checked by using the triangle inequality). If $v\in S_pX$ points at $\xi\in \pX$, we also write $b_v(q):=b_{\xi}(q,p).$

The level sets of the Busemann function $b_{\xi}(q,p)$ are called the \emph{horospheres} centered at $\xi$. The horosphere through $p$ based at $\xi\in \pX$, is denoted by $H_p(\xi)$.
For more details of the Busemann functions and horospheres, please see ~\cite{DPS,Ru1,Ru2}.
Here we are concerned with the continuity property of the horospheres and Busemann functions.

We say that the manifold $M$ satisfies the \emph{axiom of asymptoticity} if for every $v\in SX$
the following statement holds:
For any choice of $x_n, x\in X, v_n\in SX$, $x_n\to x$, $v_n\to v$ and a sequence of numbers $t_n\to \infty$, the sequence $c_n$ of geodesic segments joining $x_n$ to $c_{v_n}(t_n)$ converges to an asymptote of $c_v$.

\begin{proposition}(Cf. \cite[Theorem 6.1]{Pe2} \cite[Lemma 1.2]{Ru1})\label{horosphere}
Let $M$ be a closed Riemannian manifold without conjugate points that satisfies the the axiom of asymptoticity. Then for every $p\in X$, the map $\xi\mapsto H_\xi(p)$ is continuous in the following sense: if $\xi_n\to \xi \in \pX$ and $K\subset X$ is compact, then $H_{\xi_n}(p)\cap K\to H_\xi(p)\cap K$ uniformly in the Hausdorff topology.
\end{proposition}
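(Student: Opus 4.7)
My plan is to reduce Proposition \ref{horosphere} to the joint continuity of the Busemann function $(\xi, q) \mapsto b_\xi(q,p)$ on $\partial X \times X$, and then to establish that continuity by means of the axiom of asymptoticity. Since $q \mapsto b_\xi(q,p)$ is $1$-Lipschitz, it is enough to prove $b_{\xi_n}(q,p) \to b_\xi(q,p)$ for fixed $q,p$ whenever $\xi_n \to \xi$ in the cone topology. The upper bound $\limsup_n b_{\xi_n}(q,p) \le b_\xi(q,p)$ follows from the monotonicity $b_\xi(q,p) = \inf_{T>0}[d(q, c_{p,\xi}(T)) - T]$ (triangle inequality) combined with the pointwise continuity $c_{p,\xi_n}(T) \to c_{p,\xi}(T)$ at fixed $T$ supplied by Lemma \ref{con}; one lets first $n \to \infty$ and then $T \to \infty$.

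The heart of the argument is the lower bound $\liminf_n b_{\xi_n}(q,p) \ge b_\xi(q,p)$, and this is where the axiom of asymptoticity does the essential work. Along a subsequence with $b_{\xi_n}(q,p) \to L$, I would choose $T_n \to \infty$ so that $d(q, c_{p,\xi_n}(T_n)) - T_n \to L$. The axiom of asymptoticity, applied with $x_n = q$ and $v_n = \dot c_{p,\xi_n}(0) \to \dot c_{p,\xi}(0)$, forces the geodesic segments $\alpha_n$ from $q$ to $c_{p,\xi_n}(T_n)$ to converge on every compact time interval to the unique asymptote $c_q$ of $c_{p,\xi}$ through $q$ (uniqueness from Lemma \ref{nonincreasing}(3)). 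Using the nonincreasing behaviour of $d(c_1(t), c_2(t))$ between asymptotic geodesics (Lemma \ref{nonincreasing}(2)), I would then compare $d(q, c_{p,\xi_n}(T_n)) - T_n$ with $d(q, c_{p,\xi}(T_n')) - T_n'$ for a suitable $T_n'$ (the time at which $c_{p,\xi}$ and $c_q$ lie on matching horospheres) and pass to the limit to conclude $L \ge b_\xi(q,p)$. This is the main obstacle: without convexity of the distance function (cf.\ Remark \ref{lack}), one must ensure that the possible flat strips between asymptotes provided by Lemma \ref{nonincreasing}(4) do not open a gap between the two Busemann values, and it is exactly this that the axiom of asymptoticity rules out.

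With joint continuity of $(\xi,q) \mapsto b_\xi(q,p)$ in hand, the Hausdorff convergence $H_{\xi_n}(p) \cap K \to H_\xi(p) \cap K$ on any compact $K \subset X$ follows in both directions. If $q_n \in H_{\xi_n}(p) \cap K$ and $q_n \to q$, then $0 = b_{\xi_n}(q_n, p) \to b_\xi(q,p)$, so $q \in H_\xi(p) \cap K$. Conversely, for $q \in H_\xi(p) \cap K$, let $c_q^n$ be the unique asymptote of $c_{p,\xi_n}$ through $q$ and set $q_n := c_q^n(t_n)$ with $t_n := b_{\xi_n}(q,p)$; the cocycle identity $b_{\xi_n}(c_q^n(t),p) = b_{\xi_n}(q,p) - t$ gives $q_n \in H_{\xi_n}(p)$, while by Lemma \ref{con} the initial directions $\dot c_q^n(0) \to \dot c_q(0)$ and by the first part $t_n \to b_\xi(q,p) = 0$, so $q_n \to q$. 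Combining the two directions yields the claimed uniform Hausdorff convergence on $K$.
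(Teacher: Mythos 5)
The paper does not actually prove Proposition \ref{horosphere}: it is quoted from Pesin \cite[Theorem 6.1]{Pe2} and Ruggiero \cite[Lemma 1.2]{Ru1}, so there is no in-paper argument to compare yours against. Your reduction to continuity of $\xi\mapsto b_\xi(q,p)$ (together with the uniform $1$-Lipschitz property in $q$), your upper bound via $b_\xi(q,p)=\inf_T[d(q,c_{p,\xi}(T))-T]$, and the first direction of the Hausdorff convergence are all sound and follow the standard route. But the two steps you yourself identify as the heart of the matter are left at the level of intention, and as written neither closes.

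First, the lower bound. The axiom of asymptoticity gives convergence of the segments $\alpha_n$ from $q$ to $c_{p,\xi_n}(T_n)$ to the asymptote $c_{q,\xi}$ only uniformly on compact time intervals, whereas the quantity $d(q,c_{p,\xi_n}(T_n))-T_n$ is determined by the far endpoint $\alpha_n(L_n)$ with $L_n\to\infty$; so one cannot ``pass to the limit'' in the comparison you describe, and the proposed matching time $T_n'$ is never pinned down. The estimate one gets from truncating at a fixed $s$, namely $d(q,c_{p,\xi_n}(T_n))-T_n\ge s+b_{\xi_n}(\alpha_n(s),p)$, merely reproduces the monotonicity already used for the upper bound. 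A correct completion sets $x_n:=c_{p,\xi_n}(T_n)$, observes that $x_n\to\xi$ in the cone topology and that $d(q,x_n)-T_n=b_{x_n}(q,p)$, and invokes the extension of $b$ to $\overline X$ (Lemma \ref{continuity1}); the axiom of asymptoticity is exactly what makes that extension continuous, but the argument has to be run in that form. Second, in the reverse Hausdorff inclusion you use the identity $b_{\xi_n}(c_q^n(t),p)=b_{\xi_n}(q,p)-t$ along the asymptote through $q$. Only the inequality $\ge$ is free (from the Lipschitz property); the inequality $\le$, i.e.\ that asymptotic rays through points other than $p$ are calibrated for $b_{\xi_n}(\cdot,p)$, is itself a nontrivial consequence of the axiom of asymptoticity, of essentially the same depth as the proposition, and cannot be quoted as a ``cocycle identity.'' (A minor scope remark: Lemmas \ref{nonincreasing} and \ref{con} are stated for manifolds without focal points, while the proposition is asserted for manifolds without conjugate points satisfying the axiom of asymptoticity; the uniqueness of the limiting asymptote should be extracted from the axiom itself rather than from Lemma \ref{nonincreasing}(3).)
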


\begin{proposition}(Cf. \cite[Theorem 5.2]{Pe2})\label{axiom}
If the manifold $M$ has no focal points, then it satisfies the the axiom of asymptoticity.
\end{proposition}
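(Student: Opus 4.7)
The plan is to extract a convergent subsequence of initial directions for the segments $\sigma_n$ joining $x_n$ to $c_{v_n}(t_n)$, and then use the strict monotonicity in Lemma \ref{nonincreasing}(1), applied in reverse time from the common endpoint, to bound the distance between the candidate limit geodesic and $c_v$. Parametrize $\sigma_n$ by unit speed, with initial vector $w_n \in S_{x_n}X$ and length $L_n := d(x_n, c_{v_n}(t_n))$. The triangle inequality gives $|L_n - t_n| \leq d(x_n, c_{v_n}(0)) =: D_n$, and $D_n \to D := d(x, c_v(0))$. By compactness of the unit tangent bundle over a compact neighborhood of $x$, we extract a subsequence along which $w_n \to w \in S_xX$; continuous dependence of the geodesic flow on initial conditions then gives $\sigma_n \to c_w$ uniformly on compact time intervals. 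It suffices to show that $c_w$ is asymptotic to $c_v$: by Lemma \ref{nonincreasing}(3) the asymptote to $c_v$ from $x$ is unique, so $w$ will then be independent of the subsequence and the full sequence $w_n$ will converge.

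\textbf{Reversal and key estimate.} Set $\bar\sigma_n(s) := \sigma_n(L_n - s)$ and $\bar c_{v_n}(s) := c_{v_n}(t_n - s)$, so that both are geodesics starting from the common point $c_{v_n}(t_n)$ at $s = 0$. Assuming the two are distinct (otherwise $\sigma_n$ lies along $c_{v_n}$ and the conclusion is immediate), Lemma \ref{nonincreasing}(1) yields that $s \mapsto d(\bar\sigma_n(s), \bar c_{v_n}(s))$ is strictly increasing on $[0, L_n]$. Its maximum on this interval is therefore attained at $s = L_n$ and equals $d(x_n, c_{v_n}(t_n - L_n)) \leq D_n + |t_n - L_n| \leq 2 D_n$. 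For any fixed $T \geq 0$ and $n$ large enough that $L_n > T$, evaluating at $s = L_n - T$ and then comparing $c_{v_n}(t_n - L_n + T)$ to $c_{v_n}(T)$ via $|t_n - L_n| \leq D_n$ gives $d(\sigma_n(T), c_{v_n}(T)) \leq 3 D_n$.

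\textbf{Limit and main obstacle.} For each fixed $T$, as $n \to \infty$ we have $\sigma_n(T) \to c_w(T)$, $c_{v_n}(T) \to c_v(T)$, and $D_n \to D$, so $d(c_w(T), c_v(T)) \leq 3 D$ for every $T \geq 0$, meaning $c_w$ is asymptotic to $c_v$ as desired. The main obstacle is the absence of convexity of $t \mapsto d(c_1(t), c_2(t))$ for general manifolds without focal points (cf.\ Remark \ref{lack}), which prevents direct two-endpoint control of distance between segments in the style available under nonpositive curvature. The fix above is to propagate the estimate backward from the coinciding terminal endpoint $c_{v_n}(t_n)$, exploiting the strict monotonicity of Lemma \ref{nonincreasing}(1) so that the bounded initial gap $D_n$ at the far end of the reversal controls distances uniformly on the whole segment.
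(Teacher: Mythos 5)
The paper does not actually prove Proposition \ref{axiom}; it is imported wholesale from \cite[Theorem 5.2]{Pe2}, so there is no in-paper argument to compare against, and your proof must be judged on its own. It is correct and self-contained. You correctly identify the obstruction (no convexity of $t\mapsto d(c_1(t),c_2(t))$, Remark \ref{lack}) and the correct workaround: reverse time so that $\bar\sigma_n$ and $\bar c_{v_n}$ emanate from the \emph{common} endpoint $c_{v_n}(t_n)$, and use the strict divergence of geodesics from a common point (Lemma \ref{nonincreasing}(1)) to propagate the far-end bound backwards over the whole segment. The bookkeeping checks out: $|L_n-t_n|\le D_n$ gives $d(\bar\sigma_n(L_n),\bar c_{v_n}(L_n))\le 2D_n$, monotonicity transfers this to every $s\in[0,L_n]$, the reparametrization shift costs one more $D_n$, so $d(\sigma_n(T),c_{v_n}(T))\le 3D_n$ and in the limit $d(c_w(T),c_v(T))\le 3D$ for all $T\ge 0$, i.e.\ $c_w$ is an asymptote of $c_v$ (when $D=0$ one takes any positive constant). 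The upgrade from subsequential to full convergence via uniqueness of the asymptote through $x$ is legitimate within the paper's logical order, since Lemma \ref{nonincreasing}(3) is quoted from O'Sullivan independently of the axiom of asymptoticity; you are right to use it only for uniqueness, as the existence half of that statement is, in some treatments, proved by exactly the limiting construction you are carrying out. In substance this is the classical argument behind the cited result rather than a genuinely different route, but having it written out explicitly is a useful addition given that the paper only cites it.
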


By Proposition \ref{axiom}, Proposition \ref{horosphere} applies to manifolds without focal points. So we have the following corollary (see also \cite[Proposition 9]{LWW}):
\begin{corollary}\label{continuous}
The functions $(v,q)\mapsto b_v(q)$ and $(\xi,p,q)\mapsto b_\xi(p,q)$ are continuous on $SX\times X$ and $\pX\times X\times X$ respectively.
\end{corollary}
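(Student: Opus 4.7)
The plan is to reduce the second continuity statement to the first and then prove the first by sandwiching $b_v(q)$ between monotone approximations controlled via Propositions \ref{horosphere} and \ref{axiom}. For the reduction, given $(\xi,p)\in\pX\times X$, Lemma \ref{nonincreasing}(3) produces a unique $v(\xi,p)\in S_pX$ with $v(\xi,p)^+=\xi$, and Lemma \ref{con} (applied with $p_n\to p$ and $x_n=\xi_n\to \xi$) makes $(\xi,p)\mapsto v(\xi,p)$ continuous from $\pX\times X$ to $SX$. Since $b_\xi(q,p)=b_{v(\xi,p)}(q)$ by definition, continuity of $(v,q)\mapsto b_v(q)$ on $SX\times X$ immediately implies continuity of $(\xi,p,q)\mapsto b_\xi(q,p)$ on $\pX\times X\times X$.

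To prove continuity of $(v,q)\mapsto b_v(q)$, fix $(v_n,q_n)\to(v_0,q_0)$ in $SX\times X$ and set $f^t(v,q):=d(q,c_v(t))-t$. Each $f^t$ is continuous on $SX\times X$ by continuity of the geodesic flow, and $t\mapsto f^t(v,q)$ is monotonically decreasing with limit $b_v(q)\ge -d(\pi(v),q)$ by the triangle inequality. The upper bound is then immediate: for any $T>0$,
\[
\limsup_{n\to\infty} b_{v_n}(q_n)\le \limsup_{n\to\infty} f^T(v_n,q_n)=f^T(v_0,q_0),
\]
so letting $T\to\infty$ yields $\limsup_n b_{v_n}(q_n)\le b_{v_0}(q_0)$. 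For the matching lower bound I would exploit the horosphere characterization $b_v(q)=s$ iff $q\in H_{v^+}(c_v(-s))$, a consequence of $b_{v^+}(c_v(\tau),\pi(v))=-\tau$ together with the cocycle identity $b_\xi(q,p_1)=b_\xi(q,p_2)+b_\xi(p_2,p_1)$. Given $\ep>0$, pick $s_0$ with $f^{s_0}(v_0,q_0)<b_{v_0}(q_0)+\ep/2$ and set $y_n:=c_{v_n}(s_0)$, so $y_n\to y_0:=c_{v_0}(s_0)$. Proposition \ref{axiom} supplies the axiom of asymptoticity, so Proposition \ref{horosphere} delivers Hausdorff convergence $H_{v_n^+}(y_n)\cap K\to H_{v_0^+}(y_0)\cap K$ on any compact $K$ containing $q_0$. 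Combined with the cocycle identity in the form $b_{v_n}(q_n)=b_{v_n^+}(q_n,y_n)-s_0$, this should force $\liminf_n b_{v_n}(q_n)\ge b_{v_0}(q_0)-\ep$, and $\ep\to 0$ finishes the proof.

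The main obstacle is precisely the absence of convexity highlighted in Remark \ref{lack}: in nonpositive curvature, convexity of $f^t(v,\cdot)$ yields uniform-on-compacts convergence $f^t\to b_v$ on a neighborhood of $v_0$, from which the lower bound is immediate. Without focal points this shortcut disappears, so the proof must be routed through the asymptotic-geometry input of Propositions \ref{axiom} and \ref{horosphere}. The slightly delicate point I would need to verify carefully is that the basepoints $y_n$ themselves vary with $n$, which is why the Hausdorff convergence of horospheres, rather than mere pointwise convergence of Busemann functions at a single fixed basepoint, is the correct tool.
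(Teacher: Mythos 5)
The paper offers no written proof of this corollary: it merely notes that Proposition \ref{axiom} makes Proposition \ref{horosphere} applicable and defers the details to \cite[Proposition 9]{LWW}, so your strategy of routing the argument through horosphere continuity is consistent with the paper's intent. Your reduction of the second statement to the first and your upper-semicontinuity argument (a decreasing family of continuous functions has upper semicontinuous infimum) are correct and complete. The genuine gap is in the lower bound. First, Proposition \ref{horosphere} is stated for a \emph{fixed} base point $p$, whereas you invoke it for the moving base points $y_n=c_{v_n}(s_0)$; you flag this but do not resolve it, and the resolution is not free (one must show $H_{v_n^+}(y_n)$ and $H_{v_n^+}(y_0)$ are uniformly Hausdorff-close on compacta when $d(y_n,y_0)\to 0$, which already uses the unit-gradient/signed-distance property of Busemann functions). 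Second, and more importantly, Hausdorff convergence of the zero-level sets $H_{v_n^+}(y_n)\cap K$ does not by itself control the \emph{values} $b_{v_n^+}(q_n,y_n)$: to pass from sets to values you need $b_{v_n^+}(\cdot,y_n)$ to be the signed distance to its horosphere and you must track on which side of that horosphere $q_n$ lies; neither point is addressed, and ``this should force'' is exactly where the proof stops. Moreover, the choice of $s_0$ via $f^{s_0}(v_0,q_0)<b_{v_0}(q_0)+\ep/2$ plays no visible role in the horosphere argument, which suggests the lower-bound half has not been fully thought through.

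There is a much shorter way to close the gap using only tools already quoted in the paper. Lemma \ref{continuity1} gives $b_\xi(q,p)=\lim_{n}\bigl(d(q,x_n)-d(p,x_n)\bigr)$ for any $x_n\to\xi$, whence the antisymmetry $b_\xi(q,p)=-b_\xi(p,q)$ and, writing $w(v,q)\in S_qX$ for the vector asymptotic to $v$ (continuous in $(v,q)$ by Lemma \ref{con}), the identity $b_v(q)=-b_{w(v,q)}(\pi v)$. Applying your upper-semicontinuity bound to the right-hand side converts it into the required lower semicontinuity of the left-hand side, and the corollary follows with no appeal to Hausdorff convergence of horospheres at all.
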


We would like to obtain certain equicontinuity of Busemann function $v\mapsto b_v(q)$. The idea is to extend the function to the compact space $X\cup \pX$.
Given $x,p,q\in X$, define
$$b_x(q,p):=d(q,x)-d(p,x).$$

\begin{lemma}\label{continuity1}(\cite[Proposition 10]{LWW})
For each pair of points $p,q\in X$, if there is a sequence $\{x_{n}\}\subset X$ with $\lim_{n\rightarrow +\infty}x_{n}=\xi\in \pX$, then $$\lim_{n\rightarrow +\infty}b_{x_n}(q,p)=b_{\xi}(q,p).$$
\end{lemma}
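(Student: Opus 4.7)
The plan is to sandwich $b_{x_n}(q,p) = d(q,x_n) - d(p,x_n)$ between matching upper and lower bounds that both converge to $b_\xi(q,p)$, exploiting the monotonicity of $t\mapsto d(q,c(t))-t$ along any geodesic $c$ (immediate from the triangle inequality).

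First I would set up the relevant geodesics. Let $v_n := \dot c_{p,x_n}(0)\in S_pX$ be the initial unit vector of the geodesic from $p$ to $x_n$, write $t_n := d(p,x_n)$ so that $x_n = c_{v_n}(t_n)$, and set $v := \dot c_{p,\xi}(0)$. By Lemma \ref{con} (applied with the constant sequence $p_n=p$), $v_n\to v$ in $S_pX$, while the description of the cone topology forces $t_n\to\infty$. Let $\eta_n := c_{v_n}(+\infty)$; since $f_p:S_pX\to\pX$ is a homeomorphism, $v_n\to v$ entails $\eta_n\to\xi$. Define $f_n(t) := d(q,c_{v_n}(t)) - t$; this is non-increasing in $t$, and by the definition of the Busemann function for the geodesic $c_{v_n}=c_{p,\eta_n}$ one has $\lim_{t\to\infty} f_n(t)=b_{\eta_n}(q,p)$ and $f_n(t_n)=b_{x_n}(q,p)$.

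For the upper bound, I would fix $T>0$, pass from $v_n\to v$ to $f_n(T)\to f(T):= d(q,c_v(T))-T$ via continuous dependence of geodesics on initial conditions, observe that $t_n\ge T$ eventually so $f_n(t_n)\le f_n(T)$ by monotonicity, and then let $n\to\infty$ followed by $T\to\infty$ to obtain $\limsup_n b_{x_n}(q,p)\le b_\xi(q,p)$. For the lower bound, monotonicity in the opposite direction gives
$$b_{x_n}(q,p)=f_n(t_n)\ \ge\ \lim_{t\to\infty}f_n(t)=b_{\eta_n}(q,p),$$
and Corollary \ref{continuous} applied to $\eta_n\to\xi$ yields $b_{\eta_n}(q,p)\to b_\xi(q,p)$, so $\liminf_n b_{x_n}(q,p)\ge b_\xi(q,p)$. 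The two inequalities complete the proof.

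The main obstacle is the lower bound: whereas the upper bound needs only the monotone decrease of $f_n$ together with a fixed-$T$ continuity argument, the lower bound requires identifying the infimum of $f_n$ as a Busemann value at the \emph{moving} boundary point $\eta_n$ and then appealing to continuity of $\eta\mapsto b_\eta(q,p)$ on $\pX$—a nontrivial ingredient (Corollary \ref{continuous}) that ultimately rests on the axiom of asymptoticity specific to manifolds without focal points (Proposition \ref{axiom}).
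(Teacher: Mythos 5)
Your argument is correct. The paper itself does not reproduce a proof of this lemma --- it simply cites \cite[Proposition 10]{LWW} --- so there is nothing internal to compare against; judged on its own, your sandwich argument is sound. The upper bound is the elementary half: monotonicity of $t\mapsto d(q,c_{v_n}(t))-t$ plus continuous dependence of $c_{v_n}(T)$ on $v_n$ for fixed $T$, with $v_n\to v$ supplied by Lemma \ref{con} and $t_n=d(p,x_n)\to\infty$ forced by the cone topology. The lower bound correctly identifies $b_{x_n}(q,p)=f_n(t_n)\ge\lim_{t\to\infty}f_n(t)=b_{\eta_n}(q,p)$ with $\eta_n=v_n^+\to\xi$ in the visual topology, and then invokes Corollary \ref{continuous}; this is legitimate within the paper's logical order, since Corollary \ref{continuous} is established beforehand (via Propositions \ref{horosphere} and \ref{axiom}, i.e.\ the axiom of asymptoticity) and independently of the present lemma, so there is no circularity. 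You are also right to flag that the lower bound is where the geometry of no focal points actually enters: without continuity of $\eta\mapsto b_\eta(q,p)$ on $\pX$ the infimum of $f_n$ could drop discontinuously as $\eta_n\to\xi$, and the triangle-inequality monotonicity alone would only give the $\limsup$ half.
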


\begin{lemma}\label{equicon1}
Let $p\in X$, $A\subset S_pX$ be closed, and $B\subset X$ be such
that $A^+:= \{v^+: v\in A\}$ and $B^\infty := \{\lim_n q_n \in \pX: q_n\in B\}$ are
disjoint subsets of $\pX$. Then the family of functions $A\to \RR$ indexed
by $B$ and given by $v\mapsto b_v(q)$ are equicontinuous: for every $\e>0$ there
exists $\d >0$ such that if $\angle_p(v,w)<\d$, then $|b_v(q)-b_w(q)|<\e$ for every $q\in B$.
\end{lemma}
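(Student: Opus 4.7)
The plan is to argue by contradiction, combining compactness of $A$ and of the cone compactification $\overline X$ with the continuity of Busemann functions (Corollary~\ref{continuous}) and their boundary approximation (Lemma~\ref{continuity1}). Suppose the conclusion fails: then there exist $\e_0 > 0$ and sequences $v_n, w_n \in A$ with $\angle_p(v_n,w_n) \to 0$ together with $q_n \in B$ satisfying $|b_{v_n}(q_n) - b_{w_n}(q_n)| \ge \e_0$. Since $A$ is closed in the compact sphere $S_pX$, after passing to a subsequence I may assume $v_n \to v \in A$ and $w_n \to v$ (same limit, by $\angle_p(v_n,w_n) \to 0$). Since the closure of $B$ in $\overline X$ is compact, a further subsequence gives $q_n \to q_* \in X \cup B^\infty$.

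If $q_* \in X$, then Corollary~\ref{continuous} immediately yields $b_{v_n}(q_n), b_{w_n}(q_n) \to b_v(q_*)$, contradicting the lower bound $\e_0$. So the problem reduces to the case $q_* = \eta \in B^\infty$; the disjointness hypothesis $A^+ \cap B^\infty = \emptyset$ then forces $\eta \ne v^+$. To treat this boundary case I would introduce the auxiliary functions $F_n(y) := d(y,q_n) - d(p,q_n)$. These are $1$-Lipschitz with $F_n(p) = 0$, hence equicontinuous and equibounded on compact sets; by Lemma~\ref{continuity1} and Arzel\`a--Ascoli they converge to $b_\eta(\cdot, p)$ uniformly on every compact subset of $X$. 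Rewriting
\[ b_{v_n}(q_n) - b_{w_n}(q_n) = \lim_{t \to \infty}\bigl[F_n(c_{v_n}(t)) - F_n(c_{w_n}(t))\bigr], \]
and using that for each fixed $T$ one has $c_{v_n}(T), c_{w_n}(T) \to c_v(T)$ in $X$, the uniform convergence of $F_n$ on compact sets implies that the bracketed expression at $t = T$ tends to $0$ as $n \to \infty$ for every fixed $T$.

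The main obstacle is exchanging the limits $\lim_t$ and $\lim_n$ in the display. My plan is to exploit the monotonicity that $t \mapsto d(c_v(t), q) - t$ is nonincreasing: by Lemma~\ref{nonincreasing}(2) applied to $c_v$ together with the asymptote to $c_v$ through $q$ provided by Lemma~\ref{nonincreasing}(3), this function decreases to $b_v(q)$. Thus $d(c_{v_n}(t), q_n) - t \searrow b_{v_n}(q_n)$, and the approximation error $E_n(T) := d(c_{v_n}(T), q_n) - T - b_{v_n}(q_n)$ is nonnegative and monotonically vanishes in $T$. Using that $\eta \ne v^+$, together with the axiom of asymptoticity (Proposition~\ref{axiom}), I expect the asymptote from $q_n$ to $v_n^+$ to stay in a bounded neighborhood of $c_v$ near $p$ uniformly in $n$, which should yield uniform-in-$n$ bounds $E_n(T) \to 0$ as $T \to \infty$ (and the analogous bound with $w_n$ in place of $v_n$). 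Combined with the pointwise-in-$T$ convergence $F_n(c_{v_n}(T)) - F_n(c_{w_n}(T)) \to 0$, a standard $\e/2$ argument then upgrades to $b_{v_n}(q_n) - b_{w_n}(q_n) \to 0$, yielding the desired contradiction.
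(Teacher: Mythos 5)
The paper does not actually write out a proof of this lemma: it defers verbatim to the proof of \cite[Lemma A.1]{CKW2}, only substituting Lemma \ref{continuity1} for the boundary-convergence statement used there. So your argument has to stand on its own. Its first half is fine and matches the expected strategy: the contradiction setup, compactness of $A$ and of $\overline X$, the disposal of the case $q_*\in X$ via Corollary \ref{continuous}, and the pointwise-in-$T$ convergence $F_n(c_{v_n}(T))-F_n(c_{w_n}(T))\to 0$. But this is the easy part of the lemma.

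The genuine gap is the interchange of limits, i.e.\ the claim that $E_n(T)=d(c_{v_n}(T),q_n)-T-b_{v_n}(q_n)\to 0$ as $T\to\infty$ \emph{uniformly in $n$}. You do not prove this: you write that you ``expect'' the asymptote from $q_n$ to $v_n^+$ to stay uniformly close to $c_v$ and that this ``should yield'' the bound, and neither assertion follows from the tools you invoke (monotonicity of $t\mapsto d(q,c_v(t))-t$, Arzel\`a--Ascoli, Lemma \ref{continuity1}, the axiom of asymptoticity). This is exactly where all the difficulty sits, because the needed uniformity is a visibility-type statement that fails in flat regions: if $c_{v_n}$ and $q_n$ lie in a totally geodesic flat piece, a direct computation gives $E_n(T)\approx d(p,q_n)^2\sin^2\theta_n/(2T)$, which for fixed $T$ blows up as $d(p,q_n)\to\infty$; correspondingly $b_v(q)-b_w(q)$ grows linearly in $d(p,q)$ there and equicontinuity itself is violated. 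Rank one manifolds without focal points may contain flat half-planes or higher flats, and the mere disjointness of $A^+$ and $B^\infty$ does not exclude this configuration, so no soft compactness argument can close the gap: one must feed in genuine geometric control on how geodesics pointing into $A^+$ interact with points of $B$ accumulating on $B^\infty$ (in the paper's application this control comes from Proposition \ref{crucial}, which produces rank one geodesics joining $\bP_\theta$ to $\bF_\theta$ through a fixed compact set). As written, your argument establishes the routine reductions and leaves the essential step unproved.
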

\begin{proof}
The proof is a repetition of that of \cite[Lemma A.1]{CKW2} and thus omitted. Notice that in our setting, Lemma \ref{continuity1} should be used instead of \cite[Corollary 2.18]{CKW1}.
\end{proof}

\subsection{Patterson-Sullivan measure and Knieper measure}
We recall the construction of Patterson-Sullivan measure and Knieper measure in this section. Let us start with the definition of  Busemann density.
\begin{definition}\label{density}
Let $X$ be a simply connected manifold without focal points and
$\Gamma$ a discrete subgroup of $\text{Iso}(X)$, the group of isometries of $X$. For a given constant $r>0$, a family of finite Borel measures $\{\mu_{p}\}_{p\in X}$
on $\pX$ is called an $r$-dimensional \emph{Busemann density} if
\begin{enumerate}
\item For any $p,\ q \in X$ and $\mu_{p}$-a.e. $\xi\in \pX$,
$$\frac{d\mu_{q}}{d \mu_{p}}(\xi)=e^{-r \cdot b_{\xi}(q,p)}$$
 where $b_{\xi}(q,p)$ is the Busemann function.
\item $\{\mu_{p}\}_{p\in X}$ is $\Gamma$-equivariant, i.e., for all Borel sets $A \subset \pX$ and for any $\c \in \Gamma$,
we have $$\mu_{\c p}(\c A) = \mu_{p}(A).$$
\end{enumerate}
\end{definition}
The construction of such a Busemann density is due to Patterson \cite{Pat} in the case of Fuchsian groups, and generalized by Sullivan \cite{Sul} for hyperbolic spaces.

Extending the techniques in \cite{Kn2} to manifolds without focal points, we constructed Busemann density via Patterson-Sullivan construction and showed the following in \cite{LWW}.
\begin{theorem}\label{unique1}(\cite[Theorem B]{LWW})
Let $M=X/\Gamma$ be a compact rank $1$ manifold without focal points, then up to a multiplicative constant, the Busemann density is unique, i.e., the Patterson-Sullivan measure is the unique Busemann density.
\end{theorem}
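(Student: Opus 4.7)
The plan is to follow the Patterson-Sullivan framework, establishing existence via Patterson's construction and uniqueness via a Hopf-style ergodicity argument, adapted to the no-focal-points setting where convexity fails. I will sketch both halves.

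For existence, I would form the Poincar\'e series $P(s,p) := \sum_{\gamma \in \Gamma} e^{-s\, d(p,\gamma p)}$ and let $\delta$ denote its abscissa of convergence. Since $M$ is compact, a standard volume-counting argument identifies $\delta$ with the topological entropy $h$. Patterson's construction (with a slowly-varying weight if the series converges at $s=\delta$, to force divergence) then produces a family of measures $\{\mu_p^{\mathrm{PS}}\}$ on $\pX$ as weak-$\ast$ limits of normalized atomic measures on $\Gamma$-orbits. The continuity of Busemann functions from Corollary \ref{continuous} ensures that the conformality and $\Gamma$-equivariance conditions of Definition \ref{density} pass to the limit with exponent $r=h$.

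For uniqueness, the key technical step is a Sullivan-type shadow lemma: for any $r$-dimensional Busemann density $\{\mu_p\}$, there exist $R_0, C > 0$ such that for every $q \in X$ and $R \geq R_0$, the $\mu_p$-mass of the set of $\xi \in \pX$ whose geodesic from $p$ to $\xi$ passes through the ball $B_R(q)$ is comparable, up to the factor $C$, to $e^{-r\, d(p,q)}$. Applied to $\gamma$-orbits and compared with the Poincar\'e series at $s=\delta$, this forces $r=\delta=h$. Given two $h$-dimensional Busemann densities $\{\mu_p\}$ and $\{\nu_p\}$, the Radon-Nikodym derivative $f := d\mu_p/d\nu_p$ is independent of $p$ by the conformal transformation rule and $\Gamma$-invariant by the equivariance. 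It then suffices to prove ergodicity of the $\Gamma$-action on $\pX$ with respect to $\nu_p$. For this I would build the Knieper measure $m$ on $SM$ from $\nu_p \otimes \nu_p$ via the Hopf parametrization (restricted to pairs joined by a rank-one geodesic), show it is a $\phi^t$-invariant probability measure supported on $\overline{\mathrm{Reg}}$, and prove it is mixing. The classical Hopf correspondence then transfers ergodicity of $\phi^t$ on $SM$ to ergodicity of the diagonal $\Gamma$-action on $\pX \times \pX$, hence on $\pX$, so that $f$ is $\nu_p$-a.e.\ constant.

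The principal obstacle is proving the shadow lemma in the absence of convexity. In the CAT(0) setting one uses convexity of horoballs to get $|b_\xi(q,p) - d(p,q)| \leq 2R$ for $\xi$ in the shadow of $B_R(q)$; by Remark \ref{lack}, such convexity fails here. I would instead combine the monotonicity of $d(c_1(t), c_2(t))$ for asymptotic geodesics (Lemma \ref{nonincreasing}(2)), the Flat Strip Lemma (Lemma \ref{nonincreasing}(4)) to handle bi-asymptotic geodesics, and the axiom of asymptoticity (Proposition \ref{axiom}) together with equicontinuity of Busemann functions (Lemma \ref{equicon1}) to control the geometry near regular geodesics and the singular stratum. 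Once the shadow lemma is established, mixing of the Knieper measure---and thus ergodicity of the boundary action---follows by standard Hopf-type arguments that rely only on the weak $\pi$-convergence theorem developed later in the paper.
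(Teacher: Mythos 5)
You should first note that the paper does not prove this statement at all: it is quoted verbatim as \cite[Theorem B]{LWW}, and the present article only uses it as imported input. So there is no internal proof to compare against; the relevant benchmark is the argument of Knieper \cite{Kn2} as extended in \cite{LWW}, and your sketch does follow that framework in its broad outline (Patterson construction, identification of the critical exponent with $h$ via \cite{FrMa}, a Sullivan-type shadow lemma, and a boundary-ergodicity argument for uniqueness).

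Two points in your uniqueness half are shakier than you present them. First, you propose to build a Knieper-type measure from an arbitrary Busemann density $\{\nu_p\}$ and prove it is \emph{mixing} in order to deduce ergodicity of the $\Gamma$-action; mixing is far stronger than what is needed, is itself a difficult theorem (it is the content of \cite{LLW}, proved \emph{after} uniqueness of the density is available), and the ``standard Hopf-type arguments relying only on the weak $\pi$-convergence theorem'' do not exist off the shelf in this setting --- the $\pi$-convergence machinery of Section 3 is designed for the closing lemma, not for boundary ergodicity. The route actually used is lighter: the shadow lemma forces $r=\delta=h$ and divergence of the Poincar\'e series at $s=h$, and then a Hopf--Tsuji--Sullivan/covering argument (comparability of any two $h$-densities on shadows, which form a differentiation basis, plus $\Gamma$-equivariance) yields proportionality; note also that writing $f=d\mu_p/d\nu_p$ presupposes absolute continuity, which must itself be extracted from the shadow lemma (or circumvented by applying ergodicity to the Busemann density $\mu+\nu$). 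Second, your diagnosis of the main obstacle in the shadow lemma is slightly misplaced: the Busemann estimate $|b_\xi(q,p)+d(p,q)|\le 2R$ on shadows follows from the triangle inequality and the monotonicity of $t\mapsto d(q,c_{p,\xi}(t))-t$, with or without convexity; the genuinely delicate point in rank one is the uniform \emph{lower} bound on the measure of shadows of balls centered on the orbit, which in \cite{Kn2,LWW} comes from the rank-one transversality statement of Proposition \ref{crucial} and compactness, not from the Flat Strip Lemma. With these two steps repaired, your outline matches the cited proof.
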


Let $\{\mu_{p}\}_{p\in X}$  be the Patterson-Sullivan measure, which is the unique Busemann density with dimension $h=h_{\top}(g)$.
Let $P:SX \to \pX \times \pX$ be the projection given by $P(v)=(v^-,v^+)$. Denote by $\mathcal{I}^{P}:=P(SX)=\{P(v)\mid v \in SX\}$ the subset of pairs in $\pX \times \pX$
which can be connected by a geodesic. Note that the connecting geodesic may not be unique and moreover, not every pair $\xi\neq \eta$ in $\pX$ can be connected by a geodesic.

Fix a point $p\in X$, we can define a $\Gamma$-invariant measure $\overline{\mu}$
on $\mathcal{I}^{P}$ by the following formula:
\begin{equation}\label{mme1}
d \overline{\mu}(\xi,\eta) := e^{h\cdot \beta_{p}(\xi,\eta)}d\mu_{p}(\xi) d\mu_{p}(\eta),
\end{equation}
where $\beta_{p}(\xi,\eta):=-\{b_{\xi}(q, p)+b_{\eta}(q, p)\}$ is the Gromov product, and $q$ is any point on a geodesic $c$ connecting $\xi$ and $\eta$. It's easy to see that the function $\beta_{p}(\xi,\eta)$ does not depend on the choice of $c$ and $q$.
In geometric language, the Gromov product $\beta_{p}(\xi,\eta)$ is the length of the part of a geodesic
$c$ between the horospheres $H_{\xi}(p)$ and $H_{\eta}(p)$.

Then $\overline{\mu}$ induces a $\phi$-invariant measure $m$ on $SX$ with
\begin{equation}\label{mme2}
m(A):=\int_{\mathcal{I}^{P}} \text{Vol}\{\pi(P^{-1}(\xi,\eta)\cap A)\}d \overline{\mu}(\xi,\eta),
\end{equation}
for all Borel sets $A\subset SX$. Here $\pi : SX \rightarrow X$ is the standard projection map and
Vol is the induced volume form on $\pi(P^{-1}(\xi,\eta))$.
If there are more than one geodesics connecting $\xi$ and $\eta$,
and one of them has rank $k\geq 1$, then by the flat strip theorem, all of these connecting geodesics have rank $k$.
Thus, $\pi(P^{-1}(\xi,\eta))$ is exactly the $k$-flat submanifold connecting $\xi$ and $\eta$,
which consists of all the rank $k$ geodesics connecting $\xi$ and $\eta$.

For any Borel set $A\subset SX$ and $t\in \mathbb{R}$,
$\text{Vol}\{\pi(P^{-1}(\xi,\eta)\cap \phi^{t}A)\}=\text{Vol}\{\pi(P^{-1}(\xi,\eta)\cap A)\}$.
Therefore $m$ is $\phi$-invariant. Moreover, $\Gamma$-invariance of $\overline{\mu}$ leads to the $\Gamma$-invariance of $m$.
Thus $m$ induced a $\phi$-invariant measure on $SM$ by
\begin{equation*}
m(A)=\int_{SM} \#(\text{pr}^{-1}(\lv)\cap A)d\lm(\lv).
\end{equation*}
It is proved in \cite{LWW} that $\lm$ is unique MME, which is called Knieper measure.
Furthermore, $\lm$ is proved to be mixing in \cite{LLW} and Kolmogorov in \cite{CKP2}. In the appendix, we prove that $\lm$ is Bernoulli.

\subsection{Local Product flow boxes}
From now on we fix $v_0\in SX$ such that $\lv_0:=\pr v_0$ is a periodic and regular vector in $SM$. Let $p:=\pi(v_0)$, which will be the reference point in the following discussions. For simplicity, we will suppress $v_0$ and $p$ from the notation. We also fix a scale $\e\in (0, \min\{\frac{1}{8}, \frac{\inj (M)}{4}\})$.

The \emph{Hopf map} $H: SX\to \pX\times \pX\times \RR$ for $p\in X$ is defined as
$$H(v):=(v^-, v^+, s(v)), \text{\ where\ }s(v):=b_{v^-}(\pi v, p).$$
From definition, we see
$s(\phi^t v)=s(v)+t$
for any $v\in SX$ and $t\in \RR$. $s$ is continuous by Corollary \ref{continuous}.

Following \cite{CKW2}, we define for each $\theta>0$ and $0<\a<\frac{3}{2}\e$,
\begin{equation*}
\begin{aligned}
&\bP=\bP_\theta:=\{w^-: w\in S_pX \text{\ and\ }\angle_p(w, v_0)\le \theta\},\\
&\bF=\bF_\theta:=\{w^+: w\in S_pX \text{\ and\ }\angle_p(w, v_0)\le \theta\},\\
&B=B_\theta^\a:=H^{-1}(\bP\times \bF\times [0,\a]),\\
&S=S_\theta:=B_\theta^{\e^2}=H^{-1}(\bP\times \bF\times [0,\e^2]).
\end{aligned}
\end{equation*}
$B=B_\theta^\a$ is called a flow \emph{box} with depth $\a$, and $S=S_\theta$ is a \emph{slice} with depth $\e^2$. We will consider $\theta>0$ small enough, which will be specified in the following.

The following lemma is a variation of \cite[Lemma 1]{Ri}, which was crucial in constructing Knieper measure.
\begin{proposition}\label{crucial}(\cite[Propositions 6 and 7]{LWW})
Let $X$ be a simply connected manifold without focal points and $v_0\in SX$ is regular. Then for any $\e>0$, there is an $\theta_1 > 0$ such that, for any $\xi \in \bP_{\theta_1}$ and $\eta \in \bF_{\theta_1}$,
there is a unique geodesic $c_{\xi,\eta}$ connecting $\xi$ and $\eta$, i.e.,
$c_{\xi,\eta}(-\infty)=\xi$ and $c_{\xi,\eta}(+\infty)=\eta$.

Moreover, the geodesic $c_{\xi,\eta}$ is regular and $d(c_{v}(0), c_{\xi,\eta})<\epsilon/10$.
\end{proposition}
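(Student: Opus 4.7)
The plan is to argue by contradiction, using a compactness argument in which the rank-one hypothesis on $v_0$ plays the role that uniform visibility plays in the CAT$(0)$ setting of \cite{Ri}. If the conclusion failed, one could extract sequences $\theta_n\downarrow 0$, $\xi_n\in\bP_{\theta_n}$ and $\eta_n\in\bF_{\theta_n}$ for which at least one of the three assertions (existence of $c_{\xi_n,\eta_n}$, uniqueness, or regularity together with the distance estimate) fails. Since $\xi_n\to v_0^-$ and $\eta_n\to v_0^+$ in the visual topology, the common target is to manufacture in the limit either a geodesic line bi-asymptotic to $c_{v_0}$ but distinct from it, or a unit parallel Jacobi field along $c_{v_0}$ perpendicular to $\dot c_{v_0}$; either conclusion contradicts $\text{rank}(v_0)=1$ via the Flat Strip Lemma (Lemma~\ref{nonincreasing}(4)) or directly by the definition of rank.

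For existence and closeness to $c_{v_0}$, I would pick points $q_k^-:=c_{p,\xi_n}(T_k)$ and $q_k^+:=c_{p,\eta_n}(T_k)$ along the rays from $p$ with $T_k\uparrow\infty$, and consider the geodesic segments $c_k$ from $q_k^-$ to $q_k^+$, reparametrized so that $c_k(0)$ realizes the closest point to $p$ on the segment. Lemma~\ref{con} ensures that the initial directions of the rays $c_{p,q_k^\pm}$ converge to $\dot c_{p,\xi_n}(0)$ and $\dot c_{p,\eta_n}(0)$, so the angle at $p$ between the two half-rays is close to $\pi$; combined with the monotonicity in Lemma~\ref{nonincreasing}(1)--(2) this forces $d(p,c_k(0))$ to stay bounded. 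An Arzel\`a--Ascoli extraction then yields a limiting geodesic line, whose endpoints at infinity are identified as $\xi_n$ and $\eta_n$ by a second application of Lemma~\ref{con}. Running the same extraction once more with $\theta_n\downarrow 0$, and using the uniqueness in $S_pX$ of the vectors pointing at $v_0^\pm$, forces $c_{\xi_n,\eta_n}\to c_{v_0}$ uniformly on compact sets, which delivers the bound $d(c_{v_0}(0),c_{\xi_n,\eta_n})<\e/10$ once $\theta_n$ is small enough.

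For uniqueness and regularity, suppose either that two distinct geodesics $c^1_n\ne c^2_n$ both join $\xi_n$ to $\eta_n$, or that some $c^1_n=c_{\xi_n,\eta_n}$ fails to be regular. In the first case the Flat Strip Lemma gives an isometric embedding $\phi_n:[0,a_n]\times\RR\to X$ with $a_n>0$ whose boundary is $c^1_n$, so the coordinate field $\partial/\partial s$ restricts to a unit parallel Jacobi field along $c^1_n$ perpendicular to $\dot c^1_n$; in the second case such a Jacobi field exists by the definition of rank $\ge 2$. Since $c^1_n\to c_{v_0}$ by the previous step, passing to a subsequential limit (using equiboundedness of these parallel unit fields and standard ODE compactness for the Jacobi equation) produces a unit parallel Jacobi field along $c_{v_0}$ perpendicular to $\dot c_{v_0}$, contradicting $\text{rank}(v_0)=1$.

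The main obstacle is the existence step. Without uniform visibility (Remark~\ref{lack}) there is no a priori reason for the segments $c_k$ to stay in a bounded region of $X$, and this is where the rank-one hypothesis on $v_0$ is indispensable: it is precisely the absence of flat strips or transverse parallel fields along $c_{v_0}$ that prevents a hypothetical runaway subsequence of $c_k$ from being absorbed by a forbidden limiting configuration. The same mechanism then closes the uniqueness and regularity arguments, so the whole proof hinges on combining the continuity results of Lemmas~\ref{con} and~\ref{nonincreasing} with the rigidity encoded in $\text{rank}(v_0)=1$.
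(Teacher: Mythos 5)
First, note that the paper does not prove this proposition at all: it is imported verbatim from \cite[Propositions 6 and 7]{LWW}, so there is no in-paper argument to compare against. Your overall strategy (contradiction along $\theta_n\downarrow 0$, extraction of a limiting geodesic, and a flat-strip/parallel-Jacobi-field contradiction with $\mathrm{rank}(v_0)=1$ for uniqueness and regularity) is indeed the standard Knieper-type strategy that \cite{LWW} follows, and your uniqueness/regularity paragraph is essentially sound \emph{once} the existence and closeness statements are in hand.

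The genuine gap is exactly where you locate it but do not fill it: the boundedness of the connecting segments. In your second paragraph you assert that the angle at $p$ between the rays to $q_k^-$ and $q_k^+$ being close to $\pi$, ``combined with the monotonicity in Lemma~\ref{nonincreasing}(1)--(2)'', forces $d(p,c_k(0))$ to stay bounded. Neither part of Lemma~\ref{nonincreasing} gives this: part (1) concerns two geodesics issuing from a common point, and part (2) concerns asymptotic geodesics; the segment from $q_k^-$ to $q_k^+$ is neither, and without convexity of $t\mapsto d(c_k(t),c_{v_0})$ (which fails outside nonpositive curvature, cf.\ Remark~\ref{lack}) a large angle at $p$ does not control the distance from $p$ to the chord. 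Your final paragraph then concedes that ``there is no a priori reason for the segments $c_k$ to stay in a bounded region,'' contradicting your earlier claim, and the proposed repair --- that a runaway subsequence would be ``absorbed by a forbidden limiting configuration'' --- is not an argument: a sequence of segments escaping to infinity has no limit in $X$, so there is no limiting flat strip or parallel field to contradict rank one unless one recenters (e.g.\ by isometries, which the bare hypothesis on $X$ does not provide) or invokes the quantitative divergence/asymptoticity machinery that \cite{LWW} actually develops for manifolds without focal points. As written, the existence step and the estimate $d(c_{v_0}(0),c_{\xi,\eta})<\e/10$ are therefore not established, and since the uniqueness and regularity arguments both presuppose $c^1_n\to c_{v_0}$ on compact sets, the whole proof rests on the missing step.
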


Based on Proposition \ref{crucial}, we have the following result.
\begin{lemma}\label{diameter}
Let $v_0, p, \e$ be as above and $\theta_1$ be given in Proposition \ref{crucial}. Then for any $0<\theta<\theta_1$,
\begin{enumerate}
  \item $\text{diam\ } \pi H^{-1}(\bP\times \bF\times \{0\})<\frac{\e}{2}$;
  \item $H^{-1}(\bP\times \bF\times \{0\})\subset SX$ is compact;
  \item $\text{diam\ } \pi B_\theta^\a <4\e$ for any $0<\a\le \frac{3\e}{2}$.
\end{enumerate}
\end{lemma}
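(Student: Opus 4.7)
The plan is to derive all three items from Proposition \ref{crucial}, which gives, for each $(\xi,\eta) \in \bP_{\theta_1} \times \bF_{\theta_1}$, a unique regular geodesic $c_{\xi,\eta}$ passing within $\e/10$ of $p$. For item (1), I would observe that any $v \in H^{-1}(\bP \times \bF \times \{0\})$ is tangent to $c_{v^-, v^+}$, and that $\pi v$ is the unique point on this geodesic with $b_{v^-}(\pi v, p) = 0$. Picking $q$ on $c_{v^-,v^+}$ with $d(q,p) < \e/10$, the map $s \mapsto b_{v^-}(c_{v^-,v^+}(s), p)$ has slope $-1$ (immediate from the cocycle identity for Busemann functions along their own asymptote), so $\pi v$ lies at signed distance $|b_{v^-}(q,p)| \le d(q,p) < \e/10$ from $q$ along the geodesic. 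The triangle inequality then gives $d(\pi v, p) < \e/5$, whence $\diam \pi H^{-1}(\bP \times \bF \times \{0\}) < 2\e/5 < \e/2$.

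For item (2), I would prove compactness by combining closedness with the bound from item (1). The angular ball $\{w \in S_pX : \angle_p(w, v_0) \le \theta\}$ is compact, and $\bP, \bF$ are continuous images of it in the visual topology on $\pX$ (via the homeomorphism $f_p$ and $w \mapsto -w$), hence compact. The Hopf map $H$ is continuous because $v \mapsto v^\pm$ is continuous on $SX$ and $s(v)=b_{v^-}(\pi v, p)$ is continuous by Corollary \ref{continuous}; therefore $H^{-1}(\bP \times \bF \times \{0\})$ is closed. By item (1) this preimage sits inside $\pi^{-1}(\overline{B}_{\e/5}(p))$, which is compact since closed balls in the complete simply connected manifold $X$ are compact, and compactness follows.

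For item (3), I would use the identity $s(\phi^t v) = s(v) + t$ to write any $v \in B_\theta^\a$ as $v = \phi^{s(v)} w$, where $w := \phi^{-s(v)} v$ lies in $H^{-1}(\bP \times \bF \times \{0\})$ and $s(v) \in [0, \a]$. Since the geodesic flow moves base points at unit speed, $d(\pi v, \pi w) \le s(v) \le \a \le 3\e/2$. Combining with item (1), for any $v_1, v_2 \in B_\theta^\a$ with associated $w_1, w_2 \in H^{-1}(\bP \times \bF \times \{0\})$, the triangle inequality yields
\[
d(\pi v_1, \pi v_2) \le d(\pi v_1, \pi w_1) + d(\pi w_1, \pi w_2) + d(\pi w_2, \pi v_2) < \tfrac{3\e}{2} + \tfrac{\e}{2} + \tfrac{3\e}{2} = \tfrac{7\e}{2} < 4\e.
\]

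I do not foresee a genuine obstacle; the argument is essentially bookkeeping once Proposition \ref{crucial} is in hand. The two small verifications worth keeping in mind are the slope $-1$ behaviour of $b_{v^-}$ along its asymptote (a direct consequence of the definition), and the continuity of the visual projections $v \mapsto v^\pm$ on $SX$, which is built into the visual/cone topology (or can be extracted from Lemma \ref{con}).
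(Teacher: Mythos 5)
Your proofs of items (2) and (3) coincide with the paper's: closedness of $H^{-1}(\bP\times\bF\times\{0\})$ from continuity of $H$ plus boundedness from (1) gives compactness, and flowing by at most $\a$ plus the triangle inequality gives $\diam \pi B_\theta^\a\le \frac{\e}{2}+2\a<4\e$. For item (1), however, you take a genuinely different and in fact cleaner route. The paper argues by contradiction and compactness: it supposes $\theta_n\to 0$ and $v_n,w_n$ with $d(\pi v_n,\pi w_n)\ge\e/2$, projects $p$ onto the geodesics $c_{v_n}$, $c_{w_n}$, passes to limit vectors which must be $\phi^t v_0$ and $\phi^s v_0$ with $|t|,|s|\le\e/10$, and derives $\e/5<d(\pi v,\pi w)\le\e/5$. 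You instead bound $d(\pi v,p)<\e/5$ directly for every $v$ in the set: Proposition \ref{crucial} puts a point $q$ of $c_{v^-,v^+}$ within $\e/10$ of $p$; the restriction of $b_{v^-}(\cdot,p)$ to $c_{v^-,v^+}$ is affine with slope $\pm1$ (in this setting that follows from the cocycle identity $b_\xi(x,p)=b_\xi(x,q)+b_\xi(q,p)$, which is a consequence of Lemma \ref{continuity1}); and $|b_{v^-}(q,p)|\le d(q,p)$, so the zero of $b_{v^-}(\cdot,p)$ along the geodesic, which is exactly $\pi v$, lies within $\e/10$ of $q$. This yields $\diam\le 2\e/5$, slightly sharper than $\e/2$, and it establishes the statement for every $\theta<\theta_1$ at once, whereas the paper's limiting argument, read literally, only yields the bound for all sufficiently small $\theta$: the sets $H^{-1}(\bP_\theta\times\bF_\theta\times\{0\})$ increase with $\theta$, so a failure at one $\theta$ does not produce failures along a sequence $\theta_n\to 0$. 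Both approaches ultimately rest on Proposition \ref{crucial}; yours trades the compactness machinery for the elementary Busemann-function bookkeeping you flag at the end, and the two small verifications you single out (the unit slope and the continuity of $v\mapsto v^{\pm}$) do hold in the no-focal-points setting.
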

\begin{proof}
Assume the contrary of (1), that is, there exist $\theta_n\to 0$, $v_n,w_n\in H^{-1}(\bP_{\theta_n}\times \bF_{\theta_n}\times \{0\})$ such that $d(\pi v_n,\pi w_n)\ge \frac{\e}{2}$.

By Proposition \ref{crucial}, for $n$ large enough, we know that $d(p, c_{v_n})<\frac{\e}{8}$ and $d(p, c_{w_n})<\frac{\e}{8}$. Let $v_n'=\phi^{t_n}v_n$ and $w_n'=\phi^{s_n}w_n$ be such that $\pi v'_n$ and $\pi w'_n$ are the projections of $p$ onto $c_{v_n}$ and $c_{w_n}$ respectively. So we have
\begin{equation*}
\begin{aligned}
&|t_n|=|b_{v_n^{-}}(\pi v_n',p)|\le d(\pi v_n',p)<\frac{\e}{10},\\
&|s_n|=|b_{w_n^{-}}(\pi w_n',p)|\le d(\pi w_n',p)<\frac{\e}{10}.
\end{aligned}
\end{equation*}
By passing to a subsequence, we can assume that $v_n'\to v, w_n'\to w$. Notice that as $\theta_n\to 0$, we know that $v^+=w^+=(v_0)^+$ and  $v^-=w^-=(v_0)^-$. Thus by Proposition \ref{crucial} $v=\phi^tv_0$ and $w=\phi^sv_0$ for some $t,s\in \RR$ with $0\le |s|,|t|\le \frac{\e}{10}$. So $d(\pi v, \pi w)\le \frac{\e}{5}$.

On the other hand, we know by the triangle inequality that
\begin{equation*}
\begin{aligned}
&d(\pi v,\pi w)=\lim_{n\to \infty}d(\pi v'_n,\pi w'_n)\\
\ge &\lim_{n\to \infty}(d(\pi v_n,\pi w_n)-d(\pi v_n,\pi v'_n)-d(\pi w_n,\pi w'_n))\\
\ge &\lim_{n\to \infty}(\frac{\e}{2}-|t_n|-|s_n|)\ge \frac{\e}{2}-\frac{\e}{10}-\frac{\e}{10}>\frac{\e}{5}.
\end{aligned}
\end{equation*}
We arrive at a contradiction. So $\text{diam\ }\pi H^{-1}(\bP\times \bF\times \{0\})\le \frac{\e}{2}$.

By the continuity of $H$, $H^{-1}(\bP\times \bF\times \{0\})\subset SX$ is closed. As the diameter of its projection under $\pi$ is no more than $\frac{\e}{2}$, we know $H^{-1}(\bP\times \bF\times \{0\})\subset SX$ is bounded. Thus it is compact.

Finally, by the triangle inequality, for any $0<\a\le \frac{\e}{2}$, we have $\text{diam\ } \pi B_\theta^\a <\frac{\e}{2}+2\a<4\e$.
\end{proof}

At last, we also assume that the choice of $\theta\in (0, \theta_1)$ satisfies the following properties. Since $\theta\mapsto \bar\mu(\bP_\theta\times \bF_\theta)$ is nondecreasing and hence has at most countably many discontinuities, we always choose $\theta$ to be the continuity point of this function, i.e.,
$$\lim_{\rho\to \theta}\bar\mu(\bP_\rho\times \bF_\rho)=\bar\mu(\bP_\theta\times \bF_\theta).$$
Furthermore, $\theta$ is chosen so that $\bar\mu(\partial\bP_\theta\times \partial\bF_\theta)=0.$ By the product structure of $B$ and $S$ and the definition of $m$, we have for any $\a\in (0,\frac{3\e}{2})$,
\begin{equation}\label{e:choice}
\begin{aligned}
\lim_{\rho\to \theta} m(S_\rho)=m(S_\theta), \quad \lim_{\rho\to \theta} m(B^\a_\rho)=m(B^\a_\theta),
\text{\ \ and\ \ }m(\partial B^\a_\theta)=0.
\end{aligned}
\end{equation}

The following is a direct corollary of Lemma \ref{equicon1}.
\begin{corollary}\label{equicon}
Given $v_0, p, \e>0$ as above, there exists $\theta_2>0$ such that for any $0<\theta<\theta_2$, if $\xi,\eta\in \bP_\theta$ and any $q$ lying within $4\e$ of $\pi H^{-1}(\bP_\theta\times \bF_\theta\times [0,\infty))$, we have $|b_\xi(q,p)-b_\eta(q,p)|<\e^2$. Similar result holds if the roles of $\bP_\theta$ and $\bF_\theta$ are reversed.
\end{corollary}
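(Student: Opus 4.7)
The strategy is to deduce the corollary as a direct application of Lemma~\ref{equicon1}, identifying suitable $A \subset S_pX$ and $B \subset X$ and then verifying the disjointness hypothesis $A^+ \cap B^\infty = \emptyset$; this disjointness is what requires care in the no-focal-points setting.

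First I would fix an auxiliary angle $\theta_2' \in (0, \theta_1)$ so that $\bP_{\theta_2'}$ and $\bF_{\theta_2'}$ are disjoint closed subsets of $\pX$. This is possible because regularity of $v_0$ gives $v_0^- \neq v_0^+$, and the visual topology on $\pX$ is Hausdorff, so sufficiently small conical neighbourhoods at $p$ of $v_0$ and $-v_0$ have disjoint forward/backward traces. Set $A := \{u \in S_pX : u^+ \in \bP_{\theta_2'}\}$, which is closed in $S_pX$ and satisfies $A^+ = \bP_{\theta_2'}$, and let $B$ denote the closed $4\e$-neighbourhood in $X$ of $\pi H^{-1}(\bP_{\theta_2'} \times \bF_{\theta_2'} \times [0,\infty))$.

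The key step is the claim $B^\infty \subset \bF_{\theta_2'}$. Given $q_n \in B$ with $q_n \to \zeta \in \pX$, choose $v_n \in H^{-1}(\bP_{\theta_2'} \times \bF_{\theta_2'} \times [0,\infty))$ with $d(q_n, \pi v_n) \le 4\e$; since a bounded perturbation does not affect limits in $\pX$, also $\pi v_n \to \zeta$ and in particular $d(\pi v_n, p) \to \infty$. By Proposition~\ref{crucial} the foot of perpendicular from $p$ to $c_{v_n}$ lies within $\e/10$ of $p$; parametrizing $c_{v_n}$ so that $c_{v_n}(0) = \pi v_n$ and using the standard identity $b_{v_n^-}(c_{v_n}(t), c_{v_n}(0)) = t$, the sign condition $s(v_n) \ge 0$ together with $d(\pi v_n, p) \to \infty$ forces $\pi v_n$ to travel arbitrarily far along $c_{v_n}$ in the forward direction, so $\pi v_n$ accumulates only at $v_n^+$. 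A subsequential argument using compactness of $\bF_{\theta_2'}$ and the continuity of Lemma~\ref{con} then identifies $\zeta \in \bF_{\theta_2'}$.

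Granted the disjointness, Lemma~\ref{equicon1} yields $\delta_0 > 0$ such that $u, u' \in A$ with $\angle_p(u, u') < \delta_0$ and $q \in B$ imply $|b_u(q) - b_{u'}(q)| < \e^2$. I would then set $\theta_2 := \min(\theta_2', \delta_0/2)$. For any $0 < \theta < \theta_2$ and $\xi, \eta \in \bP_\theta$, the unique $u_1, u_2 \in S_pX$ with $u_1^+ = \xi$ and $u_2^+ = \eta$ satisfy $\angle_p(u_i, -v_0) \le \theta$ and hence $\angle_p(u_1, u_2) \le 2\theta < \delta_0$; moreover any $q$ within $4\e$ of $\pi H^{-1}(\bP_\theta \times \bF_\theta \times [0,\infty))$ automatically lies in $B$, giving the required bound $|b_\xi(q,p) - b_\eta(q,p)| < \e^2$. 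The analogous statement with $\bP$ and $\bF$ swapped follows by repeating the argument with $-v_0$ in place of $v_0$. The main obstacle will be verifying $B^\infty \subset \bF_{\theta_2'}$: in nonpositive curvature this would be routine from convexity of the distance to a geodesic, but in our setting one must rely on the weaker monotonicity of Lemma~\ref{nonincreasing}(2) together with Lemma~\ref{con} to rule out a ``sideways'' escape to $\pX$.
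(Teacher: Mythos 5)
Your proposal is correct and follows exactly the route the paper intends: the paper states Corollary \ref{equicon} as a ``direct corollary'' of Lemma \ref{equicon1} with no further argument, and your choices of $A$ (the closed cone at $p$ over $\bP_{\theta_2'}$) and $B$ (the closed $4\e$-neighbourhood of $\pi H^{-1}(\bP_{\theta_2'}\times\bF_{\theta_2'}\times[0,\infty))$), together with the verification via Proposition \ref{crucial} and Lemma \ref{con} that $B^\infty\subset\bF_{\theta_2'}$ is disjoint from $A^+=\bP_{\theta_2'}$, supply precisely the details the paper leaves implicit. The only point worth polishing is the step ``$\pi v_n$ accumulates only at $v_n^+$,'' which for a sequence of geodesics should be justified uniformly, e.g.\ by the axiom of asymptoticity (Proposition \ref{axiom}) or by applying Lemma \ref{con} at the footpoints near $p$ as you indicate.
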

Let $\theta_0:=\min\{\theta_1, \theta_2\}$, where $\theta_1$ is given in Lemma \ref{diameter}, $\theta_2$ is given in Corollary \ref{equicon}. In the following, we always suppose that $0<\theta<\theta_0$ and \eqref{e:choice} is satisfied.

\section{Closing lemma}
The aim of this section is to establish a type of closing lemma \ref{closing}. We need study the properties of angle metric and prove a weak $\pi$-convergence theorem \ref{piconvergence}.

\subsection{Angle metric}
The \emph{angle metric} on $\partial X$ is defined as
$$\angle(\xi,\eta):=\sup_{x\in X}\angle_x(\xi,\eta), \quad \forall \xi, \eta\in \pX.$$
Then the angle metric defines a path metric $d_T$ on $\pX$, called the \emph{Tits metric}. Clearly, $d_T\ge \angle$.
The angle metric induces a topology on $\partial X$ finer than the visual topology.

Since $\angle$ is a supremum of a continuous function, it is lower semicontinuous:

\begin{lemma}(Cf. \cite[Proposition 2.7]{Wat})\label{lsc}
The angle metric is lower semicontinuous, i.e., if $\xi_n\to \xi$ and $\eta_n\to \eta$ in the visual topology, then
$$\angle(\xi,\eta)\leq \liminf_{n\to \infty}\angle(\xi_n,\eta_n).$$
\end{lemma}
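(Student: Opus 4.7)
The plan is to reduce the lower semicontinuity statement to a pointwise continuity statement at each basepoint $x\in X$, and then take the supremum.

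First I would fix an arbitrary point $x\in X$ and show that the function $(\xi,\eta)\mapsto \angle_x(\xi,\eta)$ is continuous on $\pX\times\pX$ in the visual topology. For this I would invoke Lemma \ref{con} (the continuity of $\dot c_{p\zeta}(0)$ in the pair $(p,\zeta)\in X\times\overline X$): specializing to $p_n=p=x$, $\zeta_n=\xi_n\to\xi$ gives $v_n:=\dot c_{x,\xi_n}(0)\to v:=\dot c_{x,\xi}(0)$ in $S_xX$, and likewise $w_n:=\dot c_{x,\eta_n}(0)\to w:=\dot c_{x,\eta}(0)$. Since the Riemannian angle on $S_xX\times S_xX$ is continuous, it follows that
\[
\angle_x(\xi_n,\eta_n)=\angle(v_n,w_n)\ \longrightarrow\ \angle(v,w)=\angle_x(\xi,\eta).
\]

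Next, for any fixed $x\in X$, the definition of $\angle$ as a supremum immediately yields $\angle_x(\xi_n,\eta_n)\le \angle(\xi_n,\eta_n)$ for every $n$. Passing to $\liminf$ and using the pointwise limit established above,
\[
\angle_x(\xi,\eta)=\lim_{n\to\infty}\angle_x(\xi_n,\eta_n)\le \liminf_{n\to\infty}\angle(\xi_n,\eta_n).
\]
Finally I would take the supremum over $x\in X$ on the left-hand side. Since the right-hand side does not depend on $x$, this gives
\[
\angle(\xi,\eta)=\sup_{x\in X}\angle_x(\xi,\eta)\le \liminf_{n\to\infty}\angle(\xi_n,\eta_n),
\]
which is exactly the claim.

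The only point that requires any care is the first step, namely deducing that $\dot c_{x,\xi_n}(0)\to\dot c_{x,\xi}(0)$ in $S_xX$ from $\xi_n\to\xi$ in the visual topology; this is precisely what Lemma \ref{con} provides, so no additional obstacle arises. The rest is a formal manipulation of supremum and $\liminf$, which is the standard way supremums of continuous functions become lower semicontinuous.
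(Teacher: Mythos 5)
Your proof is correct and is precisely the argument the paper indicates in the one-line remark preceding the lemma ("Since $\angle$ is a supremum of a continuous function, it is lower semicontinuous"): continuity of each $\angle_x(\cdot,\cdot)$ via Lemma \ref{con}, followed by the standard supremum/liminf manipulation. No gaps.
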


Using Lemma \ref{nonincreasing} and the first variation formula, we get the following fact, see \cite[Lemma 1.1]{DF} for another proof.
\begin{lemma}(Cf. \cite[Lemma 2.9]{Wat} and \cite[Lemma 1.1]{DF})\label{angle}
Let $v\in SX$ point at $\xi\in \pX$, and let $\eta\in \pX$. Then
$$t\mapsto \angle_{c_v(t)}(\xi,\eta)$$
is a nondecreasing function. Equivalently,
$$\angle_{c_v(t_1)}(c(\infty),\eta)+\angle_{c_v(t_2)}(c(-\infty),\eta)\le \pi$$
for any $t_1<t_2$.
\end{lemma}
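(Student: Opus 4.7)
First I observe that the two formulations are equivalent: at each point $c_v(t)$ the unit vectors pointing at $\xi=c_v(+\infty)$ and at $c_v(-\infty)$ are $\pm\dot c_v(t)$, hence antipodal on $S_{c_v(t)}X$, so $\angle_{c_v(t)}(\xi,\eta)+\angle_{c_v(t)}(c_v(-\infty),\eta)=\pi$. Writing $\alpha(t):=\angle_{c_v(t)}(\xi,\eta)$, the displayed inequality becomes $\alpha(t_1)\le\alpha(t_2)$, so it suffices to prove monotonicity of $\alpha$. The degenerate cases $\eta=\xi$ and $\eta=c_v(-\infty)$ give $\alpha\equiv 0$ or $\pi$ respectively, so I shall assume $\alpha(t)\in(0,\pi)$ throughout.

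My plan is to express $\alpha'(t)$ via a Jacobi field along a geodesic pointing at $\eta$ and then use the no focal points hypothesis to control its sign. By Lemma \ref{nonincreasing}(3), for each $t$ there is a unique geodesic $\sigma_t$ with $\sigma_t(0)=c_v(t)$ and $\sigma_t(+\infty)=\eta$; Lemma \ref{con} and Proposition \ref{axiom} make the family $t\mapsto\dot\sigma_t(0)$ continuous (and, on the regular set, smooth enough for the computation below). Fix $t_0$ and consider the variation $(t,s)\mapsto\sigma_t(s)$; the associated Jacobi field $W(s):=\partial_t\sigma_t(s)|_{t=t_0}$ along $\sigma_{t_0}$ satisfies $W(0)=\dot c_v(t_0)$. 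Differentiating $\cos\alpha(t)=\langle\dot c_v(t),\dot\sigma_t(0)\rangle$ at $t_0$, using $D_t\dot c_v\equiv 0$ and the symmetry $D_t\partial_s\sigma=D_s\partial_t\sigma$, gives
\[
-\sin\alpha(t_0)\cdot\alpha'(t_0)=\langle W(0),W'(0)\rangle=\tfrac{1}{2}(|W|^2)'(0).
\]
Since $\sin\alpha(t_0)>0$, the claim $\alpha'(t_0)\ge 0$ reduces to $(|W|^2)'(0)\le 0$.

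To produce this sign, I first bound $W$: applying Lemma \ref{nonincreasing}(2) to the asymptotic pair $\sigma_{t_0}$ and $\sigma_{t_0+\ep}$ gives $d(\sigma_{t_0}(s),\sigma_{t_0+\ep}(s))\le d(c_v(t_0),c_v(t_0+\ep))=|\ep|$ for every $s\ge 0$, whence $|W(s)|\le 1$ on $[0,\infty)$. The main obstacle is to upgrade this boundedness to the statement that $|W|^2$ is nonincreasing on $[0,\infty)$, and this is precisely where the no focal points hypothesis enters crucially. I would argue via uniqueness of bounded Jacobi fields with prescribed initial value: any two such would differ by a nontrivial bounded Jacobi field vanishing at some positive time, contradicting the no focal points characterization that a nontrivial Jacobi field vanishing at a point has strictly increasing squared length on the positive side. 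Hence $W$ coincides with the stable Jacobi field at $W(0)$, whose squared norm is nonincreasing by a standard Riccati-equation argument. Granting this, $(|W|^2)'(0)\le 0$, so $\alpha'(t_0)\ge 0$, completing the monotonicity.
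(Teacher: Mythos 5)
Your strategy --- differentiate $\cos\alpha(t)=\langle\dot c_v(t),\dot\sigma_t(0)\rangle$, identify the resulting variation field $W$ of the family of asymptotic rays with the stable Jacobi field, and use that stable Jacobi fields have nonincreasing norm on a manifold without focal points --- is genuinely different from the route the paper relies on. The paper defers to \cite[Lemma 2.9]{Wat} and \cite[Lemma 1.1]{DF}, whose arguments approximate $\eta$ by finite points $q_n\to\eta$, apply the first variation formula to the smooth distance functions $s\mapsto d(c_v(s),q_n)$ together with the monotonicity/convexity consequences of the no-focal-points hypothesis recorded in Lemma \ref{nonincreasing}, and only then pass to the limit using Lemma \ref{con}. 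The point of that route is that nothing attached to the ideal point $\eta$ is ever differentiated. Your reduction of the two formulations to one another, and the formal identity $-\sin\alpha(t_0)\,\alpha'(t_0)=\tfrac12(\|W\|^2)'(0)$, are correct as computations.

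The genuine gap is the assumed differentiability of $t\mapsto\dot\sigma_t(0)$. In this setting only continuity of this family is available (Lemma \ref{con}, via the axiom of asymptoticity), and Busemann functions are at best $C^{1,1}$; the $C^2$ regularity of $b_\eta$ (equivalently, $C^1$ dependence of the asymptotic ray on its foot point) that you need for $W$ to exist as a classical variation field and to be a Jacobi field is not known for manifolds without focal points. The parenthetical appeal to smoothness "on the regular set" does not help: the lemma is stated for arbitrary $v$ and $\eta$, and neither $c_v$ nor the rays $\sigma_t$ need be regular. Without this regularity, $\alpha'(t_0)$, $W$ and $W'(0)$ are undefined; and even granting differentiability almost everywhere, one would still need absolute continuity of $\alpha$ to integrate $\alpha'\ge 0$ a.e.\ into monotonicity. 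A second, smaller flaw: your uniqueness argument for bounded Jacobi fields is a non sequitur. The difference of two bounded fields agreeing at $s=0$ vanishes at $s=0$ (not "at some positive time"), and the no-focal-points condition gives that $\|J(s)\|^2$ is strictly increasing for $s>0$ --- but a strictly increasing function can be bounded, so no contradiction follows as stated; one needs the stronger divergence estimate for Jacobi fields vanishing at a point ($\|J(s)\|\ge (s/a)\|J(a)\|$ for $s\ge a>0$, due to O'Sullivan) to rule out boundedness. The conclusions you invoke are true facts about manifolds without focal points, but the proof as written does not establish the lemma; the finite-point first-variation argument avoids all of these issues.
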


\begin{proposition}\label{limit}
Let $v\in SX$ point at $\xi\in \pX$, and let $\eta\in \pX$. Then
$$\angle(\xi,\eta)=\lim_{t\to\infty} \angle_{c_v(t)}(\xi,\eta).$$
\end{proposition}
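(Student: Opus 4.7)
The plan is to reduce the statement in two steps and then close it out with a Busemann-function computation. By Lemma \ref{angle} the function $\alpha(t):=\angle_{c_v(t)}(\xi,\eta)$ is nondecreasing, so $L:=\lim_{t\to\infty}\alpha(t)$ exists in $[0,\pi]$, and the definition $\angle(\xi,\eta)=\sup_{y\in X}\angle_y(\xi,\eta)$ gives $L\le\angle(\xi,\eta)$ for free. Pick any $x\in X$; let $w\in S_xX$ be the unique unit vector pointing at $\xi$ (Lemma \ref{nonincreasing}(3)), and set $\alpha_w(t):=\angle_{c_w(t)}(\xi,\eta)$. Applying the monotonicity lemma to $c_w$ gives $\angle_x(\xi,\eta)\le L_w:=\lim_{t\to\infty}\alpha_w(t)$. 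Thus everything reduces to showing $L_w=L$ for any two asymptotes $c_v$ and $c_w$ of $\xi$.

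For that equality, set $p:=\pi v$ and define $f(t):=b_\eta(c_v(t),p)$, $g(t):=b_\eta(c_w(t),p)$. For each fixed $T>0$, the smooth function $s\mapsto d(c_v(s),c_{p\eta}(T))$ has first variation equal to $-\cos\angle_{c_v(s)}(\dot c_v(s),u^T_s)$, where $u^T_s\in S_{c_v(s)}X$ points from $c_v(s)$ to $c_{p\eta}(T)$. Integrating this identity over $[t,t+h]$, subtracting $T$ on each side, and letting $T\to\infty$, Lemma \ref{con} gives $u^T_s\to u_\eta(c_v(s))$ uniformly for $s$ in the compact interval $[t,t+h]$, and bounded convergence yields
\begin{equation*}
f(t+h)-f(t)=-\int_t^{t+h}\cos\alpha(s)\,ds,
\end{equation*}
and similarly with $\alpha_w$ for $g$. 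Since $\alpha(s)\to L$ and $\alpha_w(s)\to L_w$, Ces\`aro averaging gives $f(t)/t\to -\cos L$ and $g(t)/t\to -\cos L_w$. By Lemma \ref{nonincreasing}(2), $d(c_v(t),c_w(t))$ is nonincreasing and hence bounded by $D:=d(p,x)$, so the $1$-Lipschitz property of $q\mapsto b_\eta(q,p)$ forces $|f(t)-g(t)|\le D$. Dividing by $t$ and letting $t\to\infty$ gives $\cos L=\cos L_w$, and hence $L=L_w$ since both lie in $[0,\pi]$. Combined with the reduction of the first paragraph, this yields $\angle_x(\xi,\eta)\le L$ for every $x\in X$, so $\angle(\xi,\eta)\le L$ and equality follows.

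The main technical delicacy is the interchange of limits in $T$ and $s$ when deriving the integral formula for $f(t+h)-f(t)$: it rests on the \emph{uniform} version of Lemma \ref{con} along the compact arc $\{c_v(s):s\in[t,t+h]\}$, which in our no-focal-points setting takes the place of the convexity arguments available in the CAT$(0)$ case (cf.\ Remark \ref{lack}). Everything else is bookkeeping around the $1$-Lipschitz property of $b_\eta$ and the monotonicity statements already isolated in Lemmas \ref{nonincreasing} and \ref{angle}.
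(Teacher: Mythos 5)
Your proof is correct, but it takes a genuinely different route from the paper's. The paper fixes an arbitrary $q\in X$, looks at the geodesic from $q$ to $c_v(t_n)$, and proves that its terminal direction asymptotically aligns with $\dot c_v(t_n)$; that alignment claim is established by contradiction using the cocompactness of the $\Gamma$-action (pulling everything back to a fundamental domain) together with Lemma \ref{nonincreasing}(1), after which Lemma \ref{angle} applied along the connecting geodesic gives $\angle_q(\xi,\eta)\le\lim_t\angle_{c_v(t)}(\xi,\eta)$. You instead prove that the limit angle is the same along \emph{any} two asymptotes of $\xi$, via the first-variation identity $b_\eta(c_v(t),p)=-\int_0^t\cos\angle_{c_v(s)}(\xi,\eta)\,ds$ (which is legitimate: the distance functions are smooth off the diagonal since $X$ has no conjugate points, the integrand is bounded by $1$ so dominated convergence suffices and the uniformity you worry about is not even needed, and Lemma \ref{con} identifies the pointwise limit of the directions $u^T_s$), combined with the uniform bound $|b_\eta(c_v(t),p)-b_\eta(c_w(t),p)|\le d(c_v(t),c_w(t))\le d(p,x)$ from the $1$-Lipschitz property and Lemma \ref{nonincreasing}(2). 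The trade-off is notable: the paper's remark explicitly flags that its argument uses compactness of $X/\Gamma$ (Watkins' original version instead assumes recurrence of $v$), whereas your Busemann-derivative argument uses neither, so it would establish the proposition for an arbitrary simply connected manifold without focal points and arbitrary $v$. The paper's approach, on the other hand, stays entirely within the angle-comparison toolkit already set up and avoids any calculus on Busemann functions.
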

\begin{proof}
The above limit exists by Lemma \ref{angle}. Let $q\in X$ and $t_n\to \infty$ as $n\to \infty$.
Denote by $v_n:=g^{t_n}v, p_n:=\pi v_n$. Let $c_n$ be the unique geodesic from $q$ to $p_n$. Write $v_n'=\dot c_n(d(q, p_n))$.

We claim that $\angle_{p_n}(v_n,v_n')\to 0$. Assuming the claim, then by Lemmas \ref{angle} and \ref{con},
\begin{equation*}
\begin{aligned}
&\lim_{n\to \infty}\angle_{p_n}(\eta,v_n)=\lim_{n\to \infty}\angle_{p_n}(\eta,v'_n)\\
\ge &\lim_{n\to \infty}\angle_q(\eta, \dot c_n(0))=\angle_q(\eta, \xi).
\end{aligned}
\end{equation*}
Since $q$ is arbitrary, the proposition follows.

It remains to prove the claim. Assume the contrary for contradiction, i.e., $\angle_{p_n}(v_n,v_n')\ge \a$ for some $\a>0$ by passing to a subsequence. Fix a fundamental domain $\F\subset X$ containing $\pi v$.
Let $\c_n\in \Gamma$ be such that $\c_n p_n\in \F$. Since $\overline \F$ is compact, by passing to a subsequence, we assume that $\c_n p_n\to p\in \overline \F$. Define $\sigma_n$ to be the geodesic from $\c_np_n$ to $\c_n \pi v$, and $\tau_n$ to be the geodesic from $\c_np_n$ to $\c_n q$. Denote $s_n:=d(\c_np_n, \c_n q)$ and recall that $t_n=d(\c_np_n, \c_n \pi v)$. By the triangle inequality, $|s_n-t_n|\le d(\c_n \pi v, \c_n q)=d(\pi v,q)$. So $\lim_{n\to \infty}s_n=\infty$. Without loss of generality, assume $s_n:=\min\{s_n,t_n\}$. For any $0\le t\le s_n$, by Lemma \ref{nonincreasing}(1),
$$d(\sigma_n(t), \tau_n(t))\le d(\sigma_n(s_n), \tau_n(s_n))\le 2d(\pi v,q).$$
Now by passing to a subsequence, suppose that $\dot\sigma_n(0)\to \dot\sigma(0)$ and $\dot\tau_n(0)\to \dot\tau(0)$. Then $\sigma$ and $\tau$ are two geodesics starting from $p$ with $\angle_p(\dot\sigma(0), \dot\tau(0))\ge\a>0$ and $d(\sigma(t), \tau(t))\le 2d(\pi v,q)$ for any $t>0$, which is a contradiction to Lemma \ref{nonincreasing}(1). This proves the claim and the proposition.
\end{proof}
\begin{remark}
If $X/ \Gamma$ is not compact, the above proposition holds if $v\in SX$ is assumed to be recurrent (cf. \cite[Corollary 2.11]{Wat}). Our proof above uses the compactness of $X/\Gamma$ and works for arbitrary $v\in SX$.
\end{remark}

The following lemma is a rigidity counterpart of Lemma \ref{angle}. It is a fundamental difference from the nonpositive curvature case that the geodesic considered in the lemma is required to be recurrent. We do not know if the recurrence assumption can be removed.

\begin{lemma}(\cite[Corollary 6.8]{Wat})\label{rigidity}
Let $c$ be a recurrent geodesic, and suppose there exists $\xi\in \pX$ such that
$\angle_{c(t)}(\xi, c(\infty))=\theta$ for all $t$ and some $0<\theta < \pi$. Then $c$ is the boundary of a flat half plane.
\end{lemma}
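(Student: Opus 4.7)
The plan is to use recurrence of $c$, together with the constant-angle hypothesis, to produce a boundary point $\eta\ne c(\pm\infty)$ that is seen from every point of $c$ at the same angle $\theta$, and then to build a flat half plane out of the pencil of asymptotes from $c$ to $\eta$.

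First, using recurrence of $\dot c$ in $SM$, I would pick times $t_n\to\infty$ and deck transformations $\gamma_n\in\Gamma$ with $d\gamma_n(\dot c(t_n))\to\dot c(0)$. Since each $\gamma_n$ is an isometry, the hypothesis rewrites as
\[
\angle_{\gamma_n c(t_n)}\bigl(\gamma_n\xi,\gamma_n c(\infty)\bigr)=\theta\quad\text{for every }n,
\]
and by compactness of the visual boundary we may, after passing to a subsequence, assume $\gamma_n\xi\to\eta\in\partial X$. The shifted geodesic $\tilde c_n(s):=\gamma_n c(s+t_n)$ has $\dot{\tilde c}_n(0)\to\dot c(0)$, so $\tilde c_n\to c$ uniformly on compact sets and $\gamma_n c(\pm\infty)\to c(\pm\infty)$ in the cone topology. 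By Lemma \ref{con} the map $(x,\zeta)\mapsto\dot c_{x\zeta}(0)$ is continuous on $X\times\overline X$, so the angle $\angle_x(\zeta_1,\zeta_2)$ is jointly continuous. Passing to the limit in the displayed equation yields $\angle_{c(0)}(\eta,c(\infty))=\theta$, and since $\theta\in(0,\pi)$ this excludes both $\eta=c(\infty)$ (which would give angle $0$) and $\eta=c(-\infty)$ (which would give angle $\pi$). Repeating the argument with $t_n$ replaced by $t_n+s$ (using that deck transformations commute with the geodesic flow) upgrades this to $\angle_{c(s)}(\eta,c(\infty))=\theta$ for every $s\in\RR$, so $\eta$ inherits the full constant-angle rigidity that $\xi$ had.

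With $\eta\notin\{c(\pm\infty)\}$ in hand I would build the half plane from the asymptotic pencil. For each $s\in\RR$, Lemma \ref{nonincreasing}(3) provides a unique geodesic $\sigma_s$ from $c(s)$ asymptotic to $\eta$, and by construction $\angle_{c(s)}(\dot\sigma_s(0),\dot c(s))=\theta$. The map $(s,u)\mapsto\sigma_s(u)$ with $u\ge 0$ is the candidate half plane; to promote it to an isometric embedding I would show that any two $\sigma_{s_1},\sigma_{s_2}$ are bi-asymptotic and then apply the flat strip lemma (Lemma \ref{nonincreasing}(4)) to obtain flat strips that patch into a half plane. Positive asymptoticity is automatic since both geodesics end at $\eta$, so the core task is bounding $d(\sigma_{s_1}(t),\sigma_{s_2}(t))$ as $t\to -\infty$; the idea is to push the forward-time monotonicity of Lemma \ref{nonincreasing}(2) around the recurrence loop, using the isometries $\gamma_n$ to transfer the $c(t_n)$-end of the configuration back near $c(0)$ so that forward boundedness of distances on the translated pencil of asymptotes to $\xi$ becomes backward boundedness on the pencil of asymptotes to $\eta$.

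The hard part will be this last bi-asymptoticity step. In the nonpositive curvature case convexity of $t\mapsto d(\sigma_{s_1}(t),\sigma_{s_2}(t))$ makes the two-sided bound automatic; without focal points we only have monotonicity (Remark \ref{lack}), so backward control has to be squeezed out of recurrence via the isometries $\gamma_n$, which is exactly why the recurrence hypothesis on $c$ is indispensable in the statement.
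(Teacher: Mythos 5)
The paper does not actually prove this lemma: it is quoted verbatim from Watkins \cite[Corollary 6.8]{Wat} (the companion statement \cite[Lemma 6.7]{Wat} even appears in the source as a commented-out block), so there is no in-paper argument to compare against. Judged on its own terms, the first half of your proposal is sound and matches the standard setup: using recurrence to get $d\gamma_n(\dot c(t_n))\to\dot c(0)$, extracting $\eta=\lim\gamma_n\xi$, using joint continuity of the angle (Lemma \ref{con}) to get $\angle_{c(s)}(\eta,c(\infty))=\theta$ for all $s$, and ruling out $\eta=c(\pm\infty)$ because $\theta\in(0,\pi)$ — all of that is correct.

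The genuine gap is the second half, and it is exactly the step that Remark \ref{fact1} warns is hard without nonpositive curvature. You never establish that the asymptotes $\sigma_{s_1},\sigma_{s_2}$ to $\eta$ are \emph{negatively} asymptotic, and the mechanism you propose cannot deliver it: Lemma \ref{nonincreasing}(2) says that for two asymptotic geodesics the distance $d(\sigma^{\xi}_{t_n+s_1}(u),\sigma^{\xi}_{t_n+s_2}(u))$ is nonincreasing in $u$ on all of $\RR$, so knowing its value $|s_2-s_1|$ at $u=0$ bounds it only for $u\ge 0$; pulling the configuration back by $\gamma_n$ preserves the time parameter, so the quantity you need, $d(\sigma_{s_1}(-T),\sigma_{s_2}(-T))=\lim_n d(\sigma^{\xi}_{t_n+s_1}(-T),\sigma^{\xi}_{t_n+s_2}(-T))$, sits on the side where monotonicity gives a lower bound, not an upper bound. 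In nonpositive curvature one closes this with convexity or with the flat-triangle rigidity of Remark \ref{fact1} ($\angle_{c(t_1)}(\xi,c(\infty))+\angle_{c(t_2)}(\xi,c(-\infty))=\pi$ forces a flat infinite triangle), but the paper explicitly notes that this rigidity fails without focal points, which is precisely why Watkins' proof is nontrivial and why the paper imports it rather than reproving it. Even granting bi-asymptoticity, patching the resulting flat strips (which run in the $\sigma$-direction, not along $c$) into a half plane bounded by $c$ requires an additional limiting argument that is also absent. So the proposal correctly identifies where the difficulty lies but does not overcome it.
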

\begin{remark}\label{fact1}
In nonpositive curvature case, there is a much easier proof of the above lemma (without the recurrence assumption) using the following rigidity fact: If $t_1<t_2$ and $$\angle_{c(t_1)}(\xi, c(\infty))+\angle_{c(t_2)}(\xi, c(-\infty))=\pi,$$
then the infinite triangle with vertices $c(t_1), c(t_1), x$ is flat. This rigidity fact does not hold for general manifolds without foal points.
\end{remark}

The following proposition on Tits metric will not be used in subsequent discussions, but may have independent interest.

\begin{proposition}\label{pi}
Let $c$ be a recurrent geodesic, not the boundary of a flat half plane. Then
$$d_T(c(-\infty),c(\infty))>\pi.$$
\end{proposition}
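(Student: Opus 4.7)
The plan is to argue by contradiction via the flat-half-plane rigidity in Lemma \ref{rigidity}. Since $\angle_{c(t)}(c(+\infty), c(-\infty)) = \pi$ for every $t$, Proposition \ref{limit} applied to $v = \dot c(0)$ gives $\angle(c(-\infty), c(+\infty)) = \pi$, so $d_T(c(-\infty), c(+\infty)) \geq \pi$ trivially, since $d_T \geq \angle$. I then assume for contradiction that $d_T(c(-\infty), c(+\infty)) = \pi$.

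The first step is to produce $\xi \in \pX$ with $\angle(\xi, c(+\infty)) = \angle(\xi, c(-\infty)) = \pi/2$. Since $(\pX, d_T)$ is a length space by construction, a standard approximate-midpoint argument in length spaces produces points $\xi_n \in \pX$ with $d_T(c(\pm\infty), \xi_n) \leq \pi/2 + 1/n$. By compactness of $\pX$ in the visual topology, we may pass to a subsequence and assume $\xi_n \to \xi$. Since $\angle \leq d_T$, this gives $\angle(c(\pm\infty), \xi_n) \leq \pi/2 + 1/n$, and lower semicontinuity of $\angle$ (Lemma \ref{lsc}) yields $\angle(c(\pm\infty), \xi) \leq \pi/2$. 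The triangle inequality
$$\pi = \angle(c(-\infty), c(+\infty)) \leq \angle(c(-\infty), \xi) + \angle(\xi, c(+\infty)) \leq \pi$$
then forces both terms to be exactly $\pi/2$; in particular $\xi \notin \{c(+\infty), c(-\infty)\}$.

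The second step transfers this to angles at each interior point $c(t)$. Proposition \ref{limit} applied to $\dot c(0)$ and $-\dot c(0)$ together with the monotonicity in Lemma \ref{angle} shows that $t \mapsto \angle_{c(t)}(c(+\infty), \xi)$ is nondecreasing with supremum $\pi/2$ (achieved in the limit $t \to +\infty$), while $t \mapsto \angle_{c(t)}(c(-\infty), \xi)$ is nonincreasing with supremum $\pi/2$ (achieved in the limit $t \to -\infty$). Both are therefore bounded above by $\pi/2$ for every $t$, so their sum is at most $\pi$. The local triangle inequality at $c(t)$, combined with $\angle_{c(t)}(c(+\infty), c(-\infty)) = \pi$, simultaneously forces the sum to be at least $\pi$. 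Equality throughout gives $\angle_{c(t)}(\xi, c(+\infty)) \equiv \pi/2$ on all of $\RR$, and since $c$ is recurrent and $0 < \pi/2 < \pi$, Lemma \ref{rigidity} then implies that $c$ bounds a flat half plane, a contradiction.

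The one technical point requiring care is the approximate midpoint step: unlike the CAT$(0)$ setting there is no convexity of the distance function at hand, so I would invoke only the generic fact that any length space admits approximate midpoints, proved by refining an $\epsilon$-minimizing chain until all its edges have length at most $\epsilon$ and then selecting a partial sum within $\epsilon$ of half the total. Everything else is a direct manipulation of the angle monotonicity, boundary continuity, and recurrence-based rigidity results already established in the preceding subsections.
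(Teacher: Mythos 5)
Your proof is correct and follows essentially the same route as the paper's: approximate midpoints in the length space $(\pX,d_T)$, a visual-topology limit point combined with lower semicontinuity of $\angle$ and the triangle inequality to get a point $\xi$ at angle exactly $\pi/2$ from both endpoints, then the constancy of $\angle_{c(t)}(\xi,c(\infty))$ and Lemma \ref{rigidity}. The only cosmetic difference is that you derive the bound $\angle_{c(t)}(\xi,c(\pm\infty))\le\pi/2$ via monotonicity and Proposition \ref{limit}, whereas the paper uses directly that $\angle_{c(t)}\le\angle$ by the definition of $\angle$ as a supremum.
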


\begin{proof}
At first, we have
$$d_T(c(-\infty),c(\infty))\ge \angle(c(-\infty),c(\infty))\ge \angle_{c(0)}(c(-\infty),c(\infty))=\pi.$$

Assume for contradiction that
$d_T(c(-\infty),c(\infty))=\pi.$ Recall that $(\pX, d_T)$ is a length space. It follows that there exist points $\eta_i\in \pX$ such that $\lim_{i\to \infty}d_T(\eta_i, c(-\infty))=\frac{\pi}{2}$ and $\lim_{i\to \infty}d_T(\eta_i, c(\infty))=\frac{\pi}{2}$. The sequence $\eta_i$ has a limit point $\eta\in \pX$ in the visual topology. By Lemma \ref{lsc}, we have
$\angle(\eta, c(-\infty))\le \frac{\pi}{2}$ and $\angle(\eta, c(-\infty))\le\frac{\pi}{2}$,
and hence by the triangle inequality,
$$\angle(\eta, c(-\infty))=\angle(\eta, c(-\infty))=\frac{\pi}{2}.$$
For any $t\in \RR$, we have
$$\angle_{c(t)}(\eta, c(-\infty))+\angle_{c(t)}(\eta, c(\infty))\ge \angle_{c(t)}(c(\infty), c(-\infty))=\pi.$$
On the other hand,

\begin{equation*}
\begin{aligned}
&\angle_{c(t)}(\eta, c(-\infty))\le \angle(\eta, c(-\infty))=\frac{\pi}{2},\\
&\angle_{c(t)}(\eta, c(\infty))\le \angle(\eta, c(\infty))=\frac{\pi}{2}.
\end{aligned}
\end{equation*}
It follows that $\angle_{c(t)}(\eta, c(\infty))=\frac{\pi}{2}$ for any $t\in \RR$. Applying Lemma \ref{rigidity}, we know that $c$ is the boundary of a half plane, a contradiction. This proves the lemma.
\end{proof}

\begin{remark}\label{fact3}
In nonpositive curvature case, we have $d_T(\xi,\eta)=\angle(\xi,\eta)$ if there is no geodesic connecting $\xi$ and $\eta$, and thus $$\angle(\xi,\eta)=\min\{d_T(\xi,\eta),\pi\}.$$
The proof uses comparison results due to the nonpositive curvature. We do not know how to prove the above fact for manifolds without focal points, since the comparison results were absent.
\end{remark}

\subsection{Weak $\pi$-convergence theorem}
The following is a weak form of $\pi$-convergence theorem for manifolds without focal points. It is not known if the $\pi$-convergence theorem \cite[Lemma 19]{PS} for CAT$(0)$ space can be extended to manifolds without focal points, since many properties of Tits metric (like the one discussed in Remark \ref{fact3}) are absent.
\begin{theorem}[Weak $\pi$-convergence theorem]\label{piconvergence}
Let $X$ be a simply connected manifold without focal points, $v_0, p, \e$ be fixed as in Section $2.4$, and $\theta_1$ be given in Proposition \ref{crucial}. Fix any $0<\rho<\theta<\theta_1$.

Suppose that $x\in X$, and $\c_i\in \Gamma$ such that $\c_i(x)\to p\in \bF_\rho$ and $\c^{-1}_i (x)\to n \in \bP_\rho$ as $i\to \infty$. Then for any open set $U$ with $U\supset \bF_\rho$, $\c_i(\bF_\theta) \subset U$ for all $i$ sufficiently large.
\end{theorem}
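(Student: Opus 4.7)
The plan is a contradiction argument centered on the uniqueness of connecting geodesics from Proposition \ref{crucial}. First one reduces to $x=\pi v_0$: if $\c_i(x)\to p$ for some $x\in X$, then $\c_i(\pi v_0)\to p$ as well, since $d(\c_i(x),\c_i(\pi v_0))=d(x,\pi v_0)$ is bounded and bounded perturbations preserve cone-topology limits in this setting (a consequence of Lemma \ref{nonincreasing}(1): geodesics from $\pi v_0$ with distinct initial directions diverge, forcing the angle at $\pi v_0$ subtended by two points at bounded distance but going to infinity to shrink to zero). Assume the conclusion fails; by compactness of $\bF_\theta$ and of $\pX\setminus U$ in the visual topology, extract a subsequence with $\eta_i\in \bF_\theta$, $\eta_i\to\eta\in\bF_\theta$, and $\c_i(\eta_i)\to\zeta\in\pX\setminus U$. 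Since $p\in\bF_\rho\subset U$ (where $p$ refers to the boundary point in the hypothesis), $\zeta\ne p$.

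Let $c_i$ denote the geodesic ray from $\pi v_0$ to $\c_i(\eta_i)$, so by Lemma \ref{con} the initial velocity satisfies $\dot c_i(0)\to \dot c_{\pi v_0,\zeta}(0)$. The pullback $c_i':=\c_i^{-1}(c_i)$ is the ray from $\c_i^{-1}(\pi v_0)\to n\in\bP_\rho$ to $\eta_i\to\eta\in\bF_\theta$. Proposition \ref{crucial} supplies the unique geodesic $c^*$ from $n$ to $\eta$, with $d(\pi v_0,c^*)<\e/10$. The crux of the argument, and the step I expect to be the main obstacle, is the approximation
\[
\dist(\pi v_0,c_i')\le \e/10+o(1) \quad \text{as } i\to\infty,
\]
producing $q_i\in c_i'$ with $d(\pi v_0,q_i)<\e/10+o(1)$. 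The strategy is to let $y_i\in c_i'$ be a closest point to $\pi v_0$ and extract subsequential limits $y_i\to y\in X$ and $\dot c_i'(y_i)\to v\in S_yX$ by compactness, then identify the limit geodesic $c_v$ with $c^*$ via the uniqueness in Proposition \ref{crucial}. The forward endpoint $c_v(+\infty)=\eta$ follows readily from continuity of Busemann functions (Corollary \ref{continuous}) applied to the linear decay of $b_{\eta_i}$ along $c_i'$. The delicate point is the backward endpoint: since $c_i'$ is a ray whose starting point $\c_i^{-1}(\pi v_0)$ escapes to $n\in\pX$ rather than to a point of $X$, one has to verify that the bi-infinite extension of $c_i'$ has backward endpoint tending to $n$, exploiting the near-antipodality at any $q^*\in c^*$ of the directions $\dot c_{q^*,\c_i^{-1}(\pi v_0)}(0)\to -\dot c^*(q^*)$ and $\dot c_{q^*,\eta_i}(0)\to +\dot c^*(q^*)$ furnished by Lemma \ref{con}. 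Boundedness of $\{y_i\}$ itself is ensured by the uniform bound $d(\pi v_0,c_{\xi,\eta'})<\e/10$ on the compact set $\bP_{\theta_1}\times\bF_{\theta_1}$ from Proposition \ref{crucial}, together with continuity of the connecting-geodesic map there.

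Granting the approximation, $\c_i(q_i)\in c_i$ and $d(\c_i(q_i),\c_i(\pi v_0))=d(q_i,\pi v_0)$ stays bounded; combined with $\c_i(\pi v_0)\to p$ in the cone topology, the bounded-perturbation principle yields $\c_i(q_i)\to p$. Writing $\c_i(q_i)=c_i(t_i)$ with $t_i=d(\pi v_0,\c_i(q_i))\to\infty$, cone-topology convergence $c_i(t_i)\to p$ says precisely that the initial direction at $\pi v_0$ of the geodesic segment from $\pi v_0$ to $c_i(t_i)$ --- namely $\dot c_i(0)$ --- converges to $\dot c_{\pi v_0,p}(0)$. Comparing with $\dot c_i(0)\to \dot c_{\pi v_0,\zeta}(0)$ from Lemma \ref{con} forces $\zeta=p$, contradicting $\zeta\notin U$.
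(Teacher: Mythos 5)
Your overall strategy differs from the paper's: the paper never tries to show that the pulled-back rays pass near a base point; instead it works entirely with angles, using Lemma \ref{pi2} to find a point $y$ on $[x,n)$ at which every $c\in\bF_\theta$ subtends an angle close to $\pi$ with $n$, transporting this by $\c_i$, and then using the monotonicity of angles along geodesics (Lemma \ref{angle}) together with Proposition \ref{limit} to force $\angle_u(p,\c_i c)$ to be small at points $u$ near $p$. Your route would be fine if its central claim were available, but it is not, and this is a genuine gap rather than a routine verification.

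The gap is exactly where you flag it: the bound $\dist(\pi v_0,c_i')\le \e/10+o(1)$, or even just the boundedness of $\dist(\pi v_0,c_i')$, which is all the rest of your argument actually uses. Your derivation is circular. To extract a limit geodesic from the rays $c_i'=[\c_i^{-1}(\pi v_0),\eta_i)$ and identify it with $c^*=c_{n,\eta}$, you first need the closest points $y_i$ to lie in a compact set; but that is precisely the statement being proved. The appeal to Proposition \ref{crucial} and ``continuity of the connecting-geodesic map'' on $\bP_{\theta_1}\times\bF_{\theta_1}$ does not close the circle, because $c_i'$ is not a geodesic joining two ideal points of that set: its backward endpoint is the finite point $a_i=\c_i^{-1}(\pi v_0)$, and Proposition \ref{crucial} as stated says nothing about rays emanating from points of $X$ that merely converge to $n$ in the cone topology. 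The passage from ``$a_i\to n$ and $\eta_i\to\eta$'' to ``the rays $[a_i,\eta_i)$ accumulate on $c_{n,\eta}$'' is a visibility-type statement that fails without further input: in the flat plane, rays $[a_i,\eta)$ with $a_i\to n$ antipodal to $\eta$ can escape every compact set if $a_i$ drifts transversally while still converging to $n$. What saves the day in the rank one situation is a Ballmann-type lemma (the full strength of \cite[Proposition 7]{LWW}, which allows endpoints in a neighborhood of $(n,\eta)$ in $\overline X\times\overline X$, not just in $\pX\times\pX$), or else the angle-monotonicity argument the paper actually uses --- precisely because, as the paper emphasizes, convexity and uniform visibility are unavailable without focal points. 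Until you supply a proof of this compactness/visibility step from the stated hypotheses, the argument does not go through. The remaining steps (the bounded-perturbation principle via Lemma \ref{nonincreasing}(1), the identification of the forward and backward endpoints of the limit geodesic via Lemma \ref{con}, and the final comparison of $\dot c_i(0)$ with $\dot c_{\pi v_0,p}(0)$ and $\dot c_{\pi v_0,\zeta}(0)$) are sound.
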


To prove the theorem, we need the following lemma. Denote $[x,n):=c_0([0,\infty))$ where $c_0$ is a geodesic ray from $x$ and pointing at $n$, and similarly $(p, x]:=c_1((-\infty,0])$ where $c_1$ is a geodesic ray from $x$ and pointing at $p$.
\begin{lemma}\label{pi2}
For any $\tau>0$, there exists a point $y\in [x,n)$ such that $\angle_y(n,c)>\pi-\tau$ for all $c\in \bF_\theta$.
\end{lemma}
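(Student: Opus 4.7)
The plan is to first show that for each \emph{fixed} $c\in \bF_\theta$, the angle $\angle_{c_0(t)}(n,c)$ tends to $\pi$ as $t\to\infty$, where $c_0$ denotes the geodesic ray from $x$ pointing at $n$, and then to upgrade this pointwise convergence to uniformity over $c\in \bF_\theta$ via a compactness argument. Choosing $y=c_0(T)$ for sufficiently large $T$ will then yield the desired conclusion.

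\textbf{Pointwise limit.} Since $n\in \bP_\rho\subset \bP_{\theta_1}$ and $c\in \bF_\theta\subset \bF_{\theta_1}$, Proposition \ref{crucial} gives a (unique) geodesic $\gamma_{n,c}$ connecting $n$ and $c$. At any interior point $q$ of $\gamma_{n,c}$ one has $\angle_q(n,c)=\pi$, so $\angle(n,c)=\pi$. Applying Proposition \ref{limit} to the initial vector $v$ of $c_0$ (which points at $n$) gives
\[
\lim_{t\to\infty} \angle_{c_0(t)}(n,c)=\angle(n,c)=\pi.
\]

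\textbf{Uniformization via compactness.} Suppose for contradiction that the lemma fails for some $\tau>0$; then there exist $t_i\to\infty$ and $c_i\in \bF_\theta$ with $\angle_{c_0(t_i)}(n,c_i)\le \pi-\tau$. Because $\bF_\theta$ is the image of the compact set $\{w\in S_pX: \angle_p(w,v_0)\le \theta\}$ under the visual-topology homeomorphism $f_p$, it is compact; passing to a subsequence we may assume $c_i\to c_\infty\in \bF_\theta$. By the pointwise step applied to $c_\infty$, choose $T$ with $\angle_{c_0(T)}(n,c_\infty)>\pi-\tau/2$. Lemma \ref{con} (continuity of initial vectors in the endpoints) implies that at the fixed base point $c_0(T)$ the angle $\angle_{c_0(T)}(n,\cdot)$ is continuous in the visual topology, so $\angle_{c_0(T)}(n,c_i)\to \angle_{c_0(T)}(n,c_\infty)>\pi-\tau/2$ and in particular $\angle_{c_0(T)}(n,c_i)>\pi-\tau$ for all $i$ large. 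Finally, Lemma \ref{angle} (monotonicity of $t\mapsto \angle_{c_0(t)}(n,c_i)$) gives, for $i$ large enough that $t_i\ge T$,
\[
\angle_{c_0(t_i)}(n,c_i)\;\ge\;\angle_{c_0(T)}(n,c_i)\;>\;\pi-\tau,
\]
contradicting our assumption.

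\textbf{Main obstacle.} The pointwise identity $\angle(n,c)=\pi$ is immediate from the existence of a connecting geodesic (Proposition \ref{crucial}) together with Proposition \ref{limit}. The subtler point is the uniform control: one must interchange the supremum over $c\in \bF_\theta$ with the limit in $t$. This is handled by pairing the compactness of $\bF_\theta$ in the visual topology with the continuity of the geodesic-initial-vector map (Lemma \ref{con}); the monotonicity in Lemma \ref{angle} is what lets one push $t_i$ beyond a single common threshold $T$ and then conclude. No use of Tits-metric rigidity (which is unavailable without convexity, cf.\ Remarks \ref{fact1}, \ref{fact3}) is required.
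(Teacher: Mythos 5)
Your proof is correct and uses exactly the same ingredients as the paper's (compactness of $\bF_\theta$, the monotonicity in Lemma \ref{angle}, continuity of geodesic rays, Proposition \ref{limit}, and the connecting geodesic from Proposition \ref{crucial}), arranged in mirror order: the paper pulls the bad angle bound backward along the ray to a fixed point and then lets the basepoint go to infinity, while you push a good bound forward from a fixed threshold $T$. This is essentially the same argument.
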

\begin{proof}
Assume the contrary. Then there is a monotone sequence $\{y_i\}\subset [x,n)$ and a sequence of points $\{c_i\}\subset \bF_\theta$ such that $y_i\to n$ with $\angle_{y_i}(n,c_i)\le \pi-\tau$. By Lemma \ref{angle}, we know $\angle_{y_i}(n,c_k)\le \pi-\tau$ for all $k\ge i$. Since $\bF_\theta$ is compact, by passing to a subsequence, we assume that $c_k\to c\in \bF_\theta$. For any fixed $i$, the sequence of geodesic rays $[y_i,c_k)$ converge to $[y_i,c)$ as $n\to \infty$. Thus  $\angle_{y_i}(n,c_k)\to  \angle_{y_i}(n,c)\le \pi-\tau$. Since this is true for any $y_i\to n$, it follows from Proposition \ref{limit} that $\angle(n,c)\le \pi-\tau$. On the other hand, by Proposition \ref{crucial}, $c$ and $n$ are connected by a geodesic, and thus $\angle(n,c)\ge \pi$. A contradiction and the lemma is proved.
\end{proof}

\begin{proof}[Proof of Theorem \ref{piconvergence}]
Let $U\supset \bF_\rho$ be open in $\pX$. Fix $x\in X$. For any $\tau>0$, Lemma \ref{pi2} tells that there exists $y\in [x,n)$ such that  $\angle_y(n,c)>\pi-\tau$ for all $c\in \bF_\theta$.

Note that the geodesic segments $[x,\c_i^{-1}x]$ converge to the geodesic ray $[x,n)$. For each $i\in \NN$ let $y_i$ be the projection of $y$ on $[x,\c_i^{-1}x]$. Notice that $\c_i^{-1}x \to n$ as $i\to \infty$. So we may assume that $d(y_i, \c_i^{-1}(x))\ge 1$. Let $z_i\in [y_i,\c_i^{-1}x]$ such that $d(y_i,z_i)=1$. It is clear that $z_i\to z\in [y,n)$ with $d(y,z)=1$ (see \cite[p. 569]{PS}) for the figure).

By Lemma \ref{con} and uniform continuity, we have for any $c\in \bF_\theta$,
$$\angle_{y_i}(z_i,c)\to \angle_y(n,c)>\pi-\tau.$$
So for $i$ sufficiently large, $\angle_{y_i}(z_i,c)>\pi-\tau$ for any $c\in \bF_\theta$.

Now applying the action of $\c_i$ to the above geodesic segments, we have $\c_i([x,\c_i^{-1}x])=[\c_ix, x]\to (p,x]$ as $i\to \infty$. Moreover, for $i$ sufficiently large, $\angle_{\c_iy_i}(\c_iz_i,\c_ic)=\angle_{y_i}(z_i,c)>\pi-\tau$ for any $c\in \bF_\theta$.

Let $u\in (p, x]$, and for each $i$ let $u_i$ be the projection of $u$ to $[\c_i(x), x]$.
For $i$ sufficiently large, $u_i\in [\c_i(x), x]$ and $d(u_i, \c_iz_i)>1$. So there exists $v_i\in [\c_iz_i, u_i]$ such that $d(v_i, u_i)=1$. By Lemma \ref{angle},
$$\angle_{\c_iy_i}(\c_iz_i,\c_ic)+\angle_{u_i}(v_i,\c_ic)\le \pi$$
for all $c\in \bF_\theta$ and $i$ sufficiently large. It follows that
$$\angle_{u_i}(v_i,\c_ic)< \tau$$
for all $c\in \bF_\theta$ and $i$ sufficiently large.

Note that $v_i\to v$ for some $v\in (p,u]$ with $d(v,u)=1$. By Lemma \ref{con}, for $i$ sufficiently large,
$$\angle_{u}(p,\c_ic)=\angle_{u}(v,\c_ic)< \tau.$$
That is, for any $\tau>0$ and any $u\in (p,x]$, there is $i_0\in \NN$ such that for all $i\ge i_0$ and all $c\in \bF_\theta$, $\angle_{u}(p,\c_ic)< \tau.$

We claim that $\c_i\bF_\theta\subset U$ for all $i$ sufficiently large. Assume the contrary. Then by passing to a subsequence, for each $i$ there exists $c_i\in \bF_\theta$ such that $\c_ic_i\notin U$. Again by passing to a subsequence, we may assume that $\c_ic_i\to \hat c\in \pX\setminus U$. So $\angle(p, \hat c)>0$. Pick any $0<\tau<\angle(p, \hat c)$. By Lemma \ref{limit}, there exists $w\in (p,x]$ such that $\angle_w(p, \hat c)>\tau$. However, by the last paragraph, $\angle_{w}(p,\c_ic_i)< \tau$ for $i$ large enough. By continuity in Lemma \ref{con}, we arrive at a contradiction. This proves the claim and the theorem.
\end{proof}

\subsection{Closing lemma}
In the next section, we will count the number of elements in certain subsets of $\Gamma$. Let us collect the definitions here for convenience.

\begin{equation*}
\begin{aligned}
\C(t,\a)=\C_\theta(t,\a)&:=\{\c\in \C: S_\theta\cap \phi^{-t}\c_* B_\theta^\a\neq \emptyset\},\\
\C^*=\C^*_\theta&:=\{\c\in \C: \c\bF_\theta \subset \bF_\theta \text{\ and\ }\c^{-1}\bP_\theta\subset \bP_\theta\},\\
\C^*(t,\a)&:=\C^*\cap \C(t,\a),\\
\C'(t,\a)&:=\{\c\in \C^*(t,\a): \c\neq \b^n \text{\ for any\ } \b\in \C, n\ge 2\}.
\end{aligned}
\end{equation*}

\begin{lemma}[Closing lemma]\label{closing}
Let $X$ be a simply connected manifold without focal points, $v_0, p, \e$ be fixed as in Section $2.4$, and $\theta_1$ be given in Proposition \ref{crucial}. Then for every $0<\rho<\theta<\theta_1$, there exists some $t_0> 0$ such that for all $t\ge t_0$, we have $\C_\rho(t,\a)\subset \C_\theta^*$.
\end{lemma}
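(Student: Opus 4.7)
The plan is to argue by contradiction using the weak $\pi$-convergence theorem (Theorem \ref{piconvergence}). Assume the lemma fails; then there exist $t_i\to\infty$ and $\c_i \in \C_\rho(t_i,\a)\setminus \C_\theta^*$. For each $i$, choose $v_i\in S_\rho$ with $\phi^{t_i}v_i \in \c_{i,*}B_\rho^\a$, and set $w_i := \c_i^{-1}\phi^{t_i}v_i \in B_\rho^\a$. Since $S_\rho$ and $B_\rho^\a$ are compact (Lemma \ref{diameter}(2) together with continuity of $\phi$), after passing to a subsequence we may assume $v_i\to v\in S_\rho$ and $w_i\to w\in B_\rho^\a$; continuity of the Hopf map and closedness of $\bF_\rho,\bP_\rho$ give $v^+\in\bF_\rho$ and $w^-\in\bP_\rho$. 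Lemma \ref{diameter}(3) yields $d(\c_i p, c_{v_i}(t_i))<4\e$ and $d(\c_i^{-1}p, c_{w_i}(-t_i))<4\e$, by applying the isometries $\c_i^{\pm 1}$ to the inclusions $\pi v_i,\pi w_i\in \pi B_\rho^{3\e/2}$ and using that $p=\pi v_0\in \pi B_\rho^{3\e/2}$.

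The central step is to show that $\c_i(p)\to v^+$ and $\c_i^{-1}(p)\to w^-$ in the cone topology on $\overline X$. The axiom of asymptoticity (Propositions \ref{horosphere} and \ref{axiom}), applied with $x_n\equiv p$, $v_n=v_i\to v$ and $t_n=t_i\to\infty$, shows that the geodesic segments $[p, c_{v_i}(t_i)]$ converge to the ray $[p,v^+)$, i.e.\ $c_{v_i}(t_i)\to v^+$ in the cone topology; the analogous application to the reversed vectors gives $c_{w_i}(-t_i)\to w^-$. To transfer these convergences across the uniform $4\e$-bound, suppose along a subsequence $\c_ip\to\eta\in\overline X$; by Lemma \ref{con} the initial tangents $\dot c_{p,\c_ip}(0)$ converge in $S_pX$ to $\dot c_{p,\eta}(0)$. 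If $\eta\neq v^+$, then Lemma \ref{nonincreasing}(1) forces the distinct rays from $p$ to $\eta$ and to $v^+$ to diverge linearly, which combined with continuity of the geodesic flow and Lemma \ref{nonincreasing}(1) again gives $d(\c_ip, c_{v_i}(t_i))\to\infty$, contradicting the $4\e$-bound. Hence $\eta=v^+$; the argument for $\c_i^{-1}p\to w^-$ is symmetric.

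With both convergences in hand, Theorem \ref{piconvergence} applies with $x=p$: since $\c_ip\to v^+\in\bF_\rho$ and $\c_i^{-1}p\to w^-\in\bP_\rho$, for any open $U\subset\pX$ with $U\supset\bF_\rho$ we have $\c_i\bF_\theta\subset U$ for all large $i$. Because $\rho<\theta$, the set $U:=\{u^+:u\in S_pX,\ \angle_p(u,v_0)<\theta\}$ is open in the visual topology and satisfies $\bF_\rho\subset U\subset\bF_\theta$, which yields $\c_i\bF_\theta\subset\bF_\theta$ for $i$ large. The symmetric version of Theorem \ref{piconvergence}, obtained by interchanging the roles of $\bP$ and $\bF$ (its proof uses only the symmetric structure of Proposition \ref{crucial} and the angle monotonicity in Lemma \ref{angle}), applied to the sequence $\c_i^{-1}$, whose attracting and repelling boundary limits lie in $\bP_\rho$ and $\bF_\rho$ respectively, likewise gives $\c_i^{-1}\bP_\theta\subset\bP_\theta$ for $i$ large. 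Hence $\c_i\in\C_\theta^*$ eventually, contradicting our choice.

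The principal obstacle I expect is the \emph{bounded-distance preserves cone limit} step: in nonpositive curvature it is immediate from convexity of the distance function between geodesics, but in the no-focal-points setting convexity fails (Remark \ref{lack}), so we must replace it by the strict monotonicity and unboundedness of $d(c_1(t),c_2(t))$ for distinct rays from $p$ given by Lemma \ref{nonincreasing}(1), combined with Lemma \ref{con} and a compactness argument on $S_pX$ as sketched above. A minor secondary point is verifying that Theorem \ref{piconvergence} remains valid after swapping $\bP$ and $\bF$, which is a direct inspection of its proof.
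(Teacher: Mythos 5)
Your proposal is correct and follows essentially the same route as the paper: extract convergent subsequences $v_i\to v\in S_\rho$, $w_i=\c_i^{-1}\phi^{t_i}v_i\to w\in B_\rho^\a$ using compactness from Lemma \ref{diameter}, identify the boundary limits of $\c_i(\cdot)$ and $\c_i^{-1}(\cdot)$ as points of $\bF_\rho$ and $\bP_\rho$, and then invoke Theorem \ref{piconvergence} together with its $\bP$/$\bF$-swapped version. The only (harmless) difference is that you track the orbit of the reference point $p$ via the $4\e$ diameter bound and a bounded-distance argument based on Lemma \ref{nonincreasing}(1), whereas the paper bases the limits at $\pi v$ and $\pi w$ so that the relevant distances tend to $0$.
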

\begin{proof}
Let $U$ be the interior of $\bF_\theta$. Then $\bF_\rho\subset U$. We claim that there exists some $t_0> 0$ such that for all $t\ge t_0$ and $\c\in \Gamma$, if $S_\rho\cap \phi^{-t}\c B_\rho^\a\neq \emptyset$, then $\c (\bF_\theta)\subset U$.

Let us prove the claim. Assume not. Then for each $i$, there exist $t_i\to \infty$ and $\c_i\in \C$ such that $v_i\in S_\rho\cap \phi^{-t_i}\c_iB_\rho^\a$, but $\c_i(\bF_\theta) \nsubseteq U$. Clearly, for any $x\in X$, $\c_ix$ goes to infinity. By passing to a subsequence, let us assume that $\c_ix\to \xi\in \pX$.

Let $v_i\in S_\rho\cap \phi^{-t_i}\c_iB_\rho^\a$. By Lemma \ref{diameter}, $S_\rho$ and $B_\rho^\a$ are both compact. By passing to a subsequence, we may assume that $v_i\to v\in S_\rho$ and $\c_i^{-1}\phi^{t_i}v_i\to w\in B_\rho^\a$. Note that $\c_i \pi w\to \xi\in \pX$. Since $d(\c_iw, \phi^{t_i}v_i)\to 0$, we have $\xi=\lim_i \pi \phi^{t_i}v_i\in \bF_\rho$.

We may assume that $\c_i^{-1}\pi v\to \eta\in \pX$. Let $w_i=\c_i^{-1}\phi^{t_i}v_i\in B_\rho^\a$. Then $d(\c_i^{-1}v, \phi^{-t_i}w_i)=d(\c_i^{-1}v, \c_i^{-1}v_i)\to 0$, and thus $d(\c_i^{-1}\pi v, \pi \phi^{-t_i}w_i)\to 0$. We then see that $\eta=\lim_i \pi \phi^{-t_i}w_i\in \bP_\rho$.

Now we have $\c_i\pi v\to \xi\in \bF_\rho$ and $\c_i^{-1}\pi v\to \eta\in \bP_\rho$. Applying Theorem \ref{piconvergence}, we have $\c_i(\bF_\theta) \subset U$ for all $i$ sufficiently large. A contradiction and the claim follows.

By the claim, there exists some $t_0> 0$ such that for all $t\ge t_0$ and $\c\in \C_\rho(t,\a)$, we have $\c \bF_{\theta} \subset U\subset \bF_{\theta}$.

Analogously, by reversing the roles of $\bP$ and $\bF$, $S_\rho$ and $B_\rho^\a$, and the roles of $\c$ and $\c^{-1}$, we can prove that $\c^{-1} \bP_{\theta} \subset \bP_{\theta}$. Thus $\c\in \C_\theta^*$ and the proof of the lemma is completed.
\end{proof}

\section{Using scaling and mixing}
In this section, we use the scaling and mixing properties of Knieper measure $m$, to give an asymptotic estimates of $\#\C^*(t,\a)$ and $\#\C(t,\a)$.
\subsection{Depth of intersection}
To start, we want to show the relation between $t$ and $|\c|$ when $\c\in \C^*(t,\a)$.

Given $\xi\in \pX$ and $\c\in \C$, define $b_\xi^\c:=b_\xi(\c p, p)$.

\begin{lemma}\label{intersection1}
Let $\xi,\eta\in \bP$ and $c\in \C(t,\a)$ with $t>0$. Then $|b_\xi^\c-b_\eta^\c|<\e^2$.
\end{lemma}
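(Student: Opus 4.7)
The plan is to reduce this to Corollary \ref{equicon} applied at the point $q = \c p$. The hypotheses on $\xi, \eta \in \bP_\theta$ are given, so the only nontrivial step is verifying that $\c p$ lies within $4\e$ of the set $\pi H^{-1}(\bP_\theta \times \bF_\theta \times [0, \infty))$.

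To produce a witness point in this set close to $\c p$, I would exploit the hypothesis $\c \in \C(t,\a)$ by picking some $v \in S_\theta \cap \phi^{-t} \c_* B_\theta^\a$. Writing $\phi^t v = \c_* w$ for some $w \in B_\theta^\a$, the equivariance $\pi \circ \c_* = \c \circ \pi$ together with the fact that $\c$ is an isometry gives
\[
d(\c p, \pi(\phi^t v)) = d(\c p, \c(\pi w)) = d(p, \pi w).
\]
Since $v_0 \in B_\theta^\a$ (because $H(v_0) = (v_0^-, v_0^+, 0) \in \bP \times \bF \times [0,\a]$), we have $p = \pi v_0 \in \pi B_\theta^\a$, so Lemma \ref{diameter}(3) bounds the right-hand side by $4\e$.

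Next I need $\pi(\phi^t v)$ to actually lie in $\pi H^{-1}(\bP \times \bF \times [0,\infty))$. Since $v \in S_\theta = H^{-1}(\bP \times \bF \times [0,\e^2])$, the flow equivariance $s(\phi^t v) = s(v) + t$ (together with $\phi$-invariance of $v^{\pm}$) places $\phi^t v$ in $H^{-1}(\bP \times \bF \times [t, t+\e^2])$, which is contained in $H^{-1}(\bP \times \bF \times [0,\infty))$ for any $t > 0$. Thus $\c p$ lies within $4\e$ of $\pi H^{-1}(\bP \times \bF \times [0,\infty))$, and Corollary \ref{equicon} applied with $q = \c p$ delivers $|b_\xi(\c p, p) - b_\eta(\c p, p)| < \e^2$, which is exactly $|b_\xi^\c - b_\eta^\c| < \e^2$.

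There is really no serious obstacle here; the content of the lemma is essentially bookkeeping to match the hypotheses of Corollary \ref{equicon}. The only point requiring care is checking that $p$ itself lies in $\pi B_\theta^\a$ so that Lemma \ref{diameter}(3) gives a true bound on $d(p,\pi w)$ rather than merely a diameter bound.
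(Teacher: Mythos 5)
Your proof is correct and follows exactly the route the paper indicates (an application of Corollary~\ref{equicon} at $q=\c p$, with the $4\e$-proximity supplied by Lemma~\ref{diameter}(3) and the fact that $p=\pi v_0\in\pi B_\theta^\a$); this is precisely the computation of \cite[Lemma 4.11]{CKW2} that the paper omits. No gaps.
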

\begin{proof}
The proof is an application of Corollary \ref{equicon}. The computation is completely parallel to that in \cite[Lemma 4.11]{CKW2}, and hence omitted here.
\end{proof}

\begin{lemma}\label{intersection2}
Let $c$ be an axis of $\c\in \C$ and $\xi=c(-\infty)$. Then $b_\xi^\c=|\c|$.
\end{lemma}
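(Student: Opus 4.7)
The plan is to use $\c$-equivariance of the Busemann function together with the fact that $\c$ fixes $\xi$, in order to reduce the computation to a point lying on the axis itself. First I would verify that $\c\xi=\xi$: since $\c(c(t))=c(t+|\c|)$, the isometry $\c$ preserves the axis $c$ setwise (just reparametrizing it), and hence fixes both its endpoints at infinity. Combining this with the $\c$-equivariance identity $b_{\c\xi}(\c x,\c y)=b_\xi(x,y)$ and the cocycle relation $b_\xi(q_1,q_3)=b_\xi(q_1,q_2)+b_\xi(q_2,q_3)$ (both of which follow from the standard characterization $b_\xi(q,p)=\lim_t(d(q,\alpha(t))-d(p,\alpha(t)))$ along any ray $\alpha$ pointing at $\xi$), I would set $q:=c(0)$ and expand
\[
b_\xi(\c p,p)=b_\xi(\c p,\c q)+b_\xi(\c q,q)+b_\xi(q,p)=b_\xi(p,q)+b_\xi(\c q,q)+b_\xi(q,p)=b_\xi(\c q,q),
\]
reducing the problem to computing $b_\xi(\c q,q)$ for a single point $q$ on the axis.

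The remaining step is a direct substitution into the defining limit. By Lemma \ref{nonincreasing}(3), the unique geodesic from $q=c(0)$ asymptotic to $\xi=c(-\infty)$ is the reversed axis $\tilde c(t):=c(-t)$, so that $c_{q,\xi}=\tilde c$. Since $\c q=c(|\c|)$ and $c$ is parametrized by arc length,
\[
b_\xi(\c q,q)=\lim_{t\to\infty}\bigl(d(c(|\c|),c(-t))-t\bigr)=\lim_{t\to\infty}\bigl((|\c|+t)-t\bigr)=|\c|,
\]
which gives the claim. There is no substantive obstacle here; the only point requiring care is the verification that $\c$ fixes $\xi$ as a point of $\pX$ (not merely that it preserves the underlying set $c(\RR)$), without which the cocycle reduction would fail.
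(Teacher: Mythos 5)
Your proof is correct and is essentially the same computation as the one the paper defers to (\cite[Lemma 4.12]{CKW2}): use that $\c$ fixes $\xi=c(-\infty)$, reduce $b_\xi(\c p,p)$ to $b_\xi(\c q,q)$ for $q$ on the axis via the cocycle and $\c$-equivariance identities (both justified here by Lemma \ref{continuity1}), and evaluate the limit directly along the reversed axis. No gaps; your closing remark about verifying $\c\xi=\xi$ as a point of $\pX$ is exactly the right point of care.
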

\begin{proof}
The proof involves computation of Busemann functions, which is completely parallel to that in \cite[Lemma 4.12]{CKW2}. Hence it is omitted here.
\end{proof}

\begin{lemma}\label{intersection3}
Given any $\c\in \C$ and any $t\in \RR$, we have
$$S\cap \phi^{-t}\c B^\a=\{w\in E^{-1}(\bP\times \c\bF): s(w)\in [0,\e^2]\cap (b_{w^{-}}^\c-t+[0,\a])\}.$$
\end{lemma}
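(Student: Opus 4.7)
The plan is to unpack the definitions of $S$, $B^\a$, and the Hopf map, and rewrite the conditions defining $S\cap\phi^{-t}\c B^\a$ in terms of the Hopf coordinates $(w^-,w^+,s(w))$ of $w$ itself. By definition $w\in S$ is equivalent to $(w^-,w^+)\in\bP\times\bF$ and $s(w)\in[0,\e^2]$. The other condition $w\in\phi^{-t}\c B^\a$ says that $u:=\c^{-1}\phi^tw\in B^\a$, i.e.\ $(u^-,u^+)\in\bP\times\bF$ and $s(u)\in[0,\a]$. I must therefore translate the coordinates of $u$ back to those of $w$.

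The boundary piece is immediate: $\c$ is an isometry of $X$ whose boundary extension satisfies $(\c v)^{\pm}=\c(v^{\pm})$, and the flow preserves asymptote classes, so $u^{\pm}=\c^{-1}(w^{\pm})$. Hence $u^+\in\bF$ is equivalent to $w^+\in\c\bF$ (and analogously for $u^-$), accounting for the $\c\bF$ factor in the statement.

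The substantive step is the $s$-coordinate identity. Using the flow identity $s(\phi^tw)=s(w)+t$, the cocycle relation $b_\eta(q_1,p)=b_\eta(q_1,q_2)+b_\eta(q_2,p)$, and the $\C$-equivariance $b_{\c^{-1}\eta}(\c^{-1}q,\c^{-1}p)=b_\eta(q,p)$ of Busemann functions, I compute
\[
s(\c^{-1}\phi^tw)=b_{\c^{-1}w^-}(\c^{-1}\pi\phi^tw,p)=b_{\c^{-1}w^-}(\c^{-1}\pi\phi^tw,\c^{-1}p)+b_{\c^{-1}w^-}(\c^{-1}p,p).
\]
The first term equals $b_{w^-}(\pi\phi^tw,p)=s(\phi^tw)=s(w)+t$ by equivariance, while the second is $-b_{\c^{-1}w^-}(p,\c^{-1}p)=-b_{w^-}(\c p,p)=-b_{w^-}^\c$ by a further application of equivariance. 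Combining,
\[
s(\c^{-1}\phi^tw)=s(w)+t-b_{w^-}^\c,
\]
so $s(u)\in[0,\a]$ is equivalent to $s(w)\in b_{w^-}^\c-t+[0,\a]$. Intersecting with $s(w)\in[0,\e^2]$ from the $S$-condition yields the displayed $s$-constraint, and the lemma follows.

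The proof is essentially a bookkeeping exercise and I do not anticipate a genuine obstacle; the only place to be attentive is the direction of equivariance (a $\c$ versus $\c^{-1}$) and the sign in the cocycle identity, both of which I have traced above. The identity $s(\c^{-1}\phi^tw)=s(w)+t-b_{w^-}^\c$ is the main content and shows why the depth of the intersection is controlled by the Busemann quantity $b_{w^-}^\c$, which is exactly what the subsequent counting arguments will exploit.
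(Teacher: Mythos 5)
Your proof is correct and is precisely the computation the paper omits by deferring to \cite[Lemma 4.13]{CKW2}: unpack the Hopf coordinates of $w$ and of $\c^{-1}\phi^t w$, use equivariance of endpoints under isometries, and derive the key identity $s(\c^{-1}\phi^t w)=s(w)+t-b_{w^-}^{\c}$ from the cocycle and $\Gamma$-equivariance properties of Busemann functions. One small remark: your bookkeeping in fact produces the conditions $w^-\in\bP\cap\c\bP$ and $w^+\in\bF\cap\c\bF$, while the displayed right-hand side records only $w^-\in\bP$ and $w^+\in\c\bF$; this gap lies in the lemma's statement (inherited from \cite{CKW2}) rather than in your argument, and is harmless because the inclusion ``$\supset$'' is only ever invoked for $\c\in\C^*$, where $\c\bF\subset\bF$ and $\bP\subset\c\bP$ make the omitted conditions automatic.
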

\begin{proof}
The proof is completely parallel to that in \cite[Lemma 4.13]{CKW2}, and hence omitted here.
\end{proof}
\begin{lemma}\label{depth}
If $\c\in \C^*(t,\a)$, then $|\c|\in [t-\a-\e^2, t+2\e^2]$.
\end{lemma}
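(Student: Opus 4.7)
The plan is to combine the three preceding lemmas of this section (Lemmas \ref{intersection1}, \ref{intersection2}, and \ref{intersection3}) with the $\C^{*}$ condition. Lemma \ref{intersection3} converts the hypothesis $\c\in \C(t,\a)$ into a two-sided constraint on the Busemann cocycle $b_{w^{-}}^{\c}$ for any $w\in S\cap \phi^{-t}\c B^{\a}$; Lemma \ref{intersection2} identifies $b_{\xi}^{\c}$ with $|\c|$ whenever $\xi$ is the $(-\infty)$-endpoint of an axis of $\c$; and Lemma \ref{intersection1} lets us replace $w^{-}$ by such a $\xi$ with error $<\e^{2}$, provided both lie in $\bP$. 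The task thus reduces to producing an axis of $\c$ whose backward endpoint lies in $\bP$.

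Concretely, the steps I would carry out are as follows. First, fix any $w\in S\cap \phi^{-t}\c B^{\a}$ (which exists because $\c\in \C(t,\a)$). By Lemma \ref{intersection3}, the intervals $[0,\e^{2}]$ and $[b_{w^{-}}^{\c}-t,\, b_{w^{-}}^{\c}-t+\a]$ must meet, which forces
\[
b_{w^{-}}^{\c}\in [t-\a,\, t+\e^{2}].
\]
Second, since $\c\in \C^{*}$ we have $\c\bF\subset \bF$ and $\c^{-1}\bP\subset \bP$. Via the visual identification at $p$, each of $\bP$ and $\bF$ is a closed spherical cap in $\pX$ of half-angle $\theta<\theta_{1}$, hence homeomorphic to a closed disk. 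The action of $\c$ on $\pX$ is continuous (it extends continuously to $\overline{X}$), so $\c|_{\bF}:\bF\to \bF$ and $\c^{-1}|_{\bP}:\bP\to \bP$ are continuous self-maps of disks; Brouwer's fixed point theorem supplies $\eta\in \bF$ with $\c\eta=\eta$ and $\xi\in \bP$ with $\c\xi=\xi$. By Proposition \ref{crucial} there is a unique geodesic $c$ with $c(-\infty)=\xi$ and $c(+\infty)=\eta$; applying $\c$ gives another geodesic with the same endpoints, so by uniqueness $\c$ preserves $c$ setwise. Since $\Gamma$ acts freely on $X$, $\c$ acts on $c$ by a nontrivial translation, whose length is $|\c|$ by definition; thus $c$ is an axis of $\c$ with $c(-\infty)=\xi\in \bP$. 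Lemma \ref{intersection2} now gives $b_{\xi}^{\c}=|\c|$, and Lemma \ref{intersection1} (both $\xi,w^{-}\in \bP$ and $\c\in \C(t,\a)$) gives $|b_{\xi}^{\c}-b_{w^{-}}^{\c}|<\e^{2}$. Combining with the first step,
\[
t-\a-\e^{2} \;\le\; b_{w^{-}}^{\c}-\e^{2} \;<\; |\c| \;<\; b_{w^{-}}^{\c}+\e^{2} \;\le\; t+2\e^{2},
\]
which is the desired conclusion.

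The main obstacle is the axis construction in the second step: one needs a genuine axis of $\c$ whose backward endpoint actually lies in $\bP$, not merely some axis guaranteed by axiality of $\c$. The $\C^{*}$ hypothesis is designed precisely to make Brouwer applicable on the caps $\bP$ and $\bF$, and the uniqueness clause in Proposition \ref{crucial} (valid because $\theta<\theta_{1}$) promotes the two boundary fixed points into a single $\c$-invariant geodesic; freeness of the deck action then forces this invariant geodesic to be an axis. Once this structural step is in place, combining the earlier intersection lemmas is purely mechanical.
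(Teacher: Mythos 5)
Your argument follows the route the paper intends (the paper simply defers to \cite[Lemma 4.14]{CKW2}): Lemma \ref{intersection3} pins $b_{w^-}^{\c}$ into $[t-\a,\,t+\e^2]$, the $\C^*$ condition is used to produce an axis of $\c$ with backward endpoint $\xi\in\bP$, and Lemmas \ref{intersection2} and \ref{intersection1} finish the arithmetic. Your Brouwer fixed-point step on the disks $\bP$, $\bF$ is the right substitute for the unique attracting/repelling fixed points that \cite{CKW2} gets from the Morse Lemma but which, as the introduction of this paper stresses, are unavailable without focal points. The final chain of inequalities is correct.

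The one place you assert more than you prove is ``thus $c$ is an axis of $\c$ with $c(-\infty)=\xi\in\bP$.'' An axis is by definition oriented so that $\c$ translates in the \emph{positive} direction, and Brouwer gives you two fixed points but no information about which one is the forward endpoint of the translation: you must exclude that $\c$ translates the invariant geodesic toward $\xi$, in which case the correctly oriented axis has backward endpoint $\eta\in\bF$ and your appeal to Lemma \ref{intersection2} at $\xi$ is not licensed. This is precisely where the absence of North--South dynamics on $\pX$ bites. The gap closes with one extra line using your own first step: if $\c$ translated toward $\xi$, then $\xi$ is the \emph{forward} endpoint of the oriented axis, and the same Busemann cocycle computation that proves Lemma \ref{intersection2} (using $\c\xi=\xi$) yields $b_\xi^{\c}=-|\c|<0$; but Lemma \ref{intersection1} together with $b_{w^-}^{\c}\ge t-\a$ forces $b_\xi^{\c}>t-\a-\e^2>0$ once $t>\a+\e^2$, which is the only regime in which the lemma is ever invoked. (Alternatively, since $c$ is rank one by Proposition \ref{crucial}, \cite[Theorem 6.11]{Wat} shows the wrong orientation would make $\c^{-n}$ push $\bP$ minus a small neighborhood of $\xi$ into any neighborhood of $\eta$, contradicting $\c^{-n}\bP\subset\bP$.) With that addition your proof is complete and matches the paper's intended argument.
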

\begin{proof}
The proof is completely parallel to that in \cite[Lemma 4.14]{CKW2}, and hence omitted here. We just notice that the proof uses Lemmas \ref{intersection1}, \ref{intersection2} and \ref{intersection3} and also the fact that $\c\in \C^*$.
\end{proof}

The following lemma implies that the intersections also have product structure.
\begin{lemma}\label{depth1}
If $\c\in \C^*(t,\a)$, then
$$S\cap \phi^{-(t+2\e^2)}\c B^{\a+4\e^2}\supset H^{-1}(\bP\times \c\bF\times [0,\e^2]):=S^\c.$$
\end{lemma}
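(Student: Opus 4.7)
My plan is to unpack the definitions of $S$, $B^{\a+4\e^2}$, and $S^\c$ via the Hopf map, and then verify each of the three coordinates of $H(\c^{-1}\phi^{t+2\e^2}w)$ for $w\in S^\c$. The first two coordinates ($w^-$ and $w^+$) will be immediate from the assumption $\c\in \C^*$, so the real work is bounding the $s$-coordinate, which will reduce to a Busemann-function computation controlled by Lemma \ref{intersection1} and the existence of some $v\in S\cap\phi^{-t}\c B^\a$ coming from $\c\in \C(t,\a)$.

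First I would observe that any $w\in S^\c$ already lies in $S$: by definition $w^-\in \bP$, $s(w)\in[0,\e^2]$, and $w^+\in \c\bF\subset \bF$, the last inclusion using $\c\in \C^*$. Next, for the $\c$-translate: $(\c^{-1}\phi^{t+2\e^2}w)^- = \c^{-1}w^- \in \c^{-1}\bP\subset \bP$ (again by $\c\in \C^*$), and $(\c^{-1}\phi^{t+2\e^2}w)^+ = \c^{-1}w^+\in \c^{-1}(\c\bF)=\bF$. So the only nontrivial claim is
\[
s(\c^{-1}\phi^{t+2\e^2}w) \in [0,\a+4\e^2].
\]
Using isometry-invariance and the cocycle identity for Busemann functions,
\[
s(\c^{-1}\phi^{t+2\e^2}w) = b_{w^-}(\pi\phi^{t+2\e^2}w,\c p) = s(w) + t + 2\e^2 - b_{w^-}^\c,
\]
where $b_{w^-}^\c = b_{w^-}(\c p,p)$.

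The key step is then to locate $b_{w^-}^\c$. Since $\c\in \C(t,\a)$, pick any $v\in S\cap\phi^{-t}\c B^\a$. The condition $\c^{-1}\phi^t v\in B^\a$ rewrites (by the same cocycle computation as above) as $s(v)+t-b_{v^-}^\c \in [0,\a]$, so $b_{v^-}^\c \in [t-\a,\,t+\e^2]$. Since $v^-\in \bP$ and also $w^-\in \bP$, Lemma \ref{intersection1} gives $|b_{w^-}^\c - b_{v^-}^\c|<\e^2$, hence
\[
b_{w^-}^\c \in [t-\a-\e^2,\,t+2\e^2].
\]
Plugging this range together with $s(w)\in[0,\e^2]$ into the expression $s(w)+t+2\e^2-b_{w^-}^\c$ yields the two-sided bound $[0,\a+4\e^2]$, finishing the proof.

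There is no genuine obstacle here; the only thing that needs care is matching the shifts ($+2\e^2$ on the flow time, $+4\e^2$ on the box depth) to the asymmetric interval $[t-\a-\e^2,t+2\e^2]$ given by Lemma \ref{depth}/Lemma \ref{intersection1}. The shifts $2\e^2$ and $4\e^2$ are chosen precisely so that this arithmetic closes up, which is what guarantees the \emph{inclusion} of the entire product slice $S^\c$ (rather than merely a nonempty intersection). Once the inclusion $S^\c\subset S$ and the $s$-coordinate bound are both in hand, the statement $S^\c\subset S\cap \phi^{-(t+2\e^2)}\c B^{\a+4\e^2}$ follows at once.
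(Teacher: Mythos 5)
Your proof is correct and follows essentially the same route as the paper: the paper packages your cocycle computation for $s(\c^{-1}\phi^{t}w)$ into Lemma \ref{intersection3} and phrases the arithmetic as interval containments, but the substance — using some $v\in S\cap\phi^{-t}\c B^\a$ to locate $b_{v^-}^\c$, transferring to all of $\bP$ via Lemma \ref{intersection1}, and checking that the $2\e^2$ time shift and $4\e^2$ depth enlargement absorb the resulting $[t-\a-\e^2,\,t+2\e^2]$ window — is identical.
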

\begin{proof}
The proof is completely parallel to that in \cite[Lemma 5.1]{CKW2}. We reproduce the proof since it reflects that advantage of the use of the slice $S$.

Let $\c\in \C^*(t,\a)$, then $S\cap g^{-t}\c B^\a\neq \emptyset$. By Lemma \ref{intersection3}, there exists $\eta\in \bP$ such that
$$[0,\e^2]\cap (b_{\eta}^\c-t+[0,\a])\neq \emptyset.$$
It follows that $[0,\e^2]\subset (b_{\eta}^\c-t-\e^2+[0,\a+2\e^2]).$ Then by Lemma \ref{intersection1}, for any $\xi\in \bP$ we have
$$[0,\e^2]\cap (b_{\xi}^\c-t-\e^2+[0,\a+2\e^2])\neq \emptyset,$$
which in turn implies that
$$[0,\e^2]\subset  (b_{\xi}^\c-t-2\e^2+[0,\a+4\e^2]).$$
We are done by Lemma \ref{intersection3}.
\end{proof}
\subsection{Scaling and mixing calculation}
We use the following notations in the asymptotic estimates.
\begin{equation*}
\begin{aligned}
f(t)=e^{\pm C}g(t)&\Leftrightarrow e^{-C}g(t)\le f(t)\le e^{C}g(t) \text{\ for all\ } t;\\
f(t) \lesssim g(t) &\Leftrightarrow \limsup_{t\to \infty}\frac{f(t)}{g(t)}\le 1;\\
f(t) \gtrsim g(t) &\Leftrightarrow \liminf_{t\to \infty}\frac{f(t)}{g(t)}\ge 1;\\
f(t) \sim g(t) &\Leftrightarrow \lim_{t\to \infty}\frac{f(t)}{g(t)}= 1;\\
f(t)\sim e^{\pm C}g(t)&\Leftrightarrow e^{-C}g(t)\lesssim f(t)\lesssim e^{C}g(t).
\end{aligned}
\end{equation*}

\begin{lemma}\label{scaling1}
If $\c\in \C^*$, then
$$m(S^\c)=e^{\pm 2h\e}e^{-h|\c|}m(S).$$
\end{lemma}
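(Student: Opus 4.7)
The plan is to exploit the Hopf parametrization of $m$ together with the $\Gamma$-equivariance of the Patterson--Sullivan density.

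First, I would establish the product-slice identities $m(S^\c) = \e^2 \overline{\mu}(\bP \times \c\bF)$ and $m(S) = \e^2 \overline{\mu}(\bP \times \bF)$. Since $\c \in \C^*$ gives $\c\bF \subset \bF$, every pair $(\xi,\eta) \in \bP \times \c\bF$ lies in $\bP \times \bF$ and by Proposition \ref{crucial} is joined by a unique geodesic. Combined with $s(\phi^t v) = s(v) + t$, this forces $\pi(P^{-1}(\xi,\eta) \cap S^\c)$ to be a geodesic arc of length exactly $\e^2$, so formula \eqref{mme2} delivers the identity.

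Next, I would change variables $\eta = \c\eta'$ in $\overline{\mu}(\bP \times \c\bF)$. Using $\c_*\mu_p = \mu_{\c p}$ together with the Radon--Nikodym relation $d\mu_{\c^{-1}p}/d\mu_p(\eta') = e^{-h b_{\eta'}(\c^{-1}p, p)}$, this produces
\[
\overline{\mu}(\bP \times \c\bF) = \int_\bP \int_\bF e^{h\beta_p(\xi, \c\eta')}\, e^{-h b_{\eta'}(\c^{-1}p, p)}\, d\mu_p(\xi)\, d\mu_p(\eta').
\]
The ratio $\overline{\mu}(\bP \times \c\bF)/\overline{\mu}(\bP \times \bF)$ is then controlled by two uniform estimates: (a) $|\beta_p(\xi, \eta)| \le \e/5$ on $\bP \times \bF$ (and hence on $\bP \times \c\bF$), since Proposition \ref{crucial} places a point $q$ on the connecting geodesic within $\e/10$ of $p$ and $|b_\xi(q,p)| \le d(p,q)$; and (b) $b_{\eta'}(\c^{-1}p, p) = |\c| \pm \e^2$ uniformly in $\eta' \in \bF$.

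The crucial step is (b). I would extract an axis $c$ of $\c$ with $c(-\infty) \in \bP$ and $c(+\infty) \in \bF$ via Brouwer's theorem: since $\c\bF \subset \bF$ and $\c^{-1}\bP \subset \bP$, with $\bP, \bF$ compact disks in the visual topology via $f_p$, the continuous maps $\c\colon\bF\to\bF$ and $\c^{-1}\colon\bP\to\bP$ have fixed points $\eta_\pm$, and Proposition \ref{crucial} supplies the unique connecting geodesic $c$, which must be an axis of $\c$. Lemma \ref{intersection2} applied to $\c^{-1}$ with axis $\tilde c(t):=c(-t)$ gives $b_{c(+\infty)}(\c^{-1}p, p) = |\c^{-1}| = |\c|$. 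Since $p$ lies within $\e/10$ of $c$ and $\c^{-1}$ preserves $c$, also $d(\c^{-1}p, c) \le \e/10$; and $b_{c(+\infty)}(\c^{-1}p, p) = |\c| > 0$ places the projection of $\c^{-1}p$ onto $c$ in the half-axis contained in $\pi H^{-1}(\bF \times \bP \times [0,\infty))$, so Corollary \ref{equicon} (with the roles of $\bP$ and $\bF$ reversed) yields $|b_{\eta'}(\c^{-1}p, p) - |\c|| < \e^2$ for every $\eta' \in \bF$. Plugging (a) and (b) into the integral formula gives $\overline{\mu}(\bP \times \c\bF) = e^{-h|\c|} e^{\pm h(\e/5 + \e^2)} \mu_p(\bP)\mu_p(\bF)$ and $\overline{\mu}(\bP \times \bF) = e^{\pm h\e/5} \mu_p(\bP)\mu_p(\bF)$; since $\e < 1/8$, the combined error $h(2\e/5 + \e^2)$ is bounded by $2h\e$. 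The main obstacle is this step (b): without the CAT$(0)$ comparison used in \cite{CKW2}, extracting the axis with the right endpoints at infinity and verifying that Corollary \ref{equicon} applies at $q = \c^{-1}p$ demands combining Brouwer's theorem, Proposition \ref{crucial}, and the geometry of no-focal-points manifolds.
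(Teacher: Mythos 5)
Your proposal is correct and follows essentially the same route as the paper: the product identity $m(S^\c)=\e^2\bar\mu(\bP\times\c\bF)$, the change of variables via $\Gamma$-equivariance and the Radon--Nikodym relation, the bound on the Gromov product $\beta_p(\xi,\eta)$ from Proposition \ref{crucial}/Lemma \ref{diameter}, and the estimate $b_{\eta}(\c^{-1}p,p)=|\c|\pm\e^2$ via Lemma \ref{intersection2} and Corollary \ref{equicon}. Your Brouwer fixed-point argument simply makes explicit the existence of an axis of $\c$ with endpoints in $\bP\times\bF$, a point the paper's proof takes for granted; this is a welcome clarification rather than a different method.
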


\begin{proof}
The proof is completely parallel to that in \cite[Lemma 5.2]{CKW2}. The main work is to estimate $\b_p(\xi,\eta)$ and $b_\eta(\c^{-1}p,p)$ given $\xi\in \bP, \eta\in \bF$.

Firstly, take $q$ lying on the geodesic connecting $\xi$ and $\eta$ such that $b_\xi(q,p)=0$. Then
$$|\b_p(\xi,\eta)|=|b_\xi(q,p)+b_\eta(q,p)|=|b_\eta(q,p)|\le d(q,p)<\frac{\e}{2}$$
where we used Lemma \ref{diameter} in the last inequality.

Secondly, since $\c\in \C^*$, we know $\c^{-1}$ has a rank one axis $c$. By Lemma \ref{intersection2}, $b_{c(-\infty)}(\c^{-1}p,p)=|\c^{-1}|=|\c|$. Then by Corollary \ref{equicon}, we have $|b_\eta(\c^{-1}p,p)-|\c||<\e^2$ for any $\eta\in \bF$.

Notice that since $\c\bF\subset \bF$, we have $\c\eta\in \bF$ if $\eta\in \bF$. Thus we have
\begin{equation*}
\begin{aligned}
\frac{m(S^\c)}{m(S)}=&\frac{\e^2\bar \mu(\bP \times \c\bF)}{\e^2\bar \mu(\bP \times \bF)}=\frac{e^{\pm h\e/2}\mu_p(\bP)\mu_p(\c\bF)}{e^{\pm h\e/2}\mu_p(\bP)\mu_p(\bF)}\\
=&e^{\pm h\e}\frac{\mu_{\c^{-1}p}(\bF)}{\mu_p(\bF)}=e^{\pm h\e}\frac{\int_{\bF}e^{-hb_\eta(\c^{-1}p,p)}d\mu_p(\eta)}{\mu_p(\bF)}\\
=&e^{\pm h\e}\e^{\pm h\e^2}e^{-h|\c|}=e^{\pm 2h\e}e^{-h|\c|}.
\end{aligned}
\end{equation*}
\end{proof}

Combining Lemma \ref{depth} and Lemma \ref{scaling1}, we have
\begin{corollary}[Scaling]\label{scaling2}
Given $\a\le \frac{3}{2}\e$ and $\c\in \C^*(t,\a)$, we have $|t-|\c||\le 2\e$, and thus
$$m(S^\c)=e^{\pm 4h\e}e^{-ht}m(S).$$
\end{corollary}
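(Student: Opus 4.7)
The plan is to combine the two preceding lemmas directly. Lemma \ref{depth} gives $|\c|\in [t-\a-\e^2, t+2\e^2]$ whenever $\c\in \C^*(t,\a)$, and Lemma \ref{scaling1} gives $m(S^\c)=e^{\pm 2h\e}e^{-h|\c|}m(S)$ whenever $\c\in \C^*$. So the task is really to show (i) $|t-|\c||\le 2\e$ and (ii) replace $|\c|$ by $t$ in the exponent of the scaling relation at the cost of an additional factor $e^{\pm 2h\e}$.

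First I would verify the length estimate. By Lemma \ref{depth}, $|\c|-t\in [-\a-\e^2,\, 2\e^2]$. Using the standing assumption $\a\le \tfrac{3}{2}\e$ and the fact that $\e<\tfrac{1}{8}$ (from Section 2.4), we have
\[\a+\e^2\;\le\;\tfrac{3}{2}\e+\e\cdot\tfrac{1}{8}\;<\;2\e,\qquad 2\e^2\;<\;2\e,\]
so $|t-|\c||\le 2\e$, which proves the first claim.

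Next I would apply Lemma \ref{scaling1}, which is available since $\C^*(t,\a)\subset \C^*$. It yields $m(S^\c)=e^{\pm 2h\e}e^{-h|\c|}m(S)$. Using the bound $|t-|\c||\le 2\e$ just established, we have $e^{-h|\c|}=e^{h(t-|\c|)}e^{-ht}=e^{\pm 2h\e}e^{-ht}$. Multiplying the two $e^{\pm 2h\e}$ factors produces an overall $e^{\pm 4h\e}$, giving
\[m(S^\c)\;=\;e^{\pm 4h\e}e^{-ht}m(S),\]
which is the claimed scaling.

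There is essentially no obstacle here: the two ingredients are already in place, and the only thing to check is the elementary arithmetic comparing $\a+\e^2$ and $2\e^2$ with $2\e$ under the standing assumptions on $\e$ and $\a$. The corollary is genuinely a corollary and the proof is a one-line combination.
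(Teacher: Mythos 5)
Your proof is correct and is exactly the paper's argument: the paper states the corollary as an immediate combination of Lemma \ref{depth} and Lemma \ref{scaling1}, and your arithmetic check that $\a+\e^2\le \tfrac{3}{2}\e+\e^2<2\e$ and $2\e^2<2\e$ (using $\e<\tfrac{1}{8}$) together with the substitution $e^{-h|\c|}=e^{\pm 2h\e}e^{-ht}$ is precisely what is implicitly being done. Nothing is missing.
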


\begin{remark}
It is clear that the conclusions in Lemma \ref{scaling1} and Corollary \ref{scaling2} hold if $m, S, S^\c$ are replaced by $\lm, \lS, {\lS}^\c$ respectively.
\end{remark}

Finally, we combine scaling and mixing properties of Knieper measure to obtain the following asymptotic estimates. The proof is a repetition of \cite[Section 5.2]{CKW2}. Since it is a key step, we provide a detailed proof here.
\begin{proposition}\label{asymptotic}
We have
\begin{equation*}
\begin{aligned}
e^{-4h\e}\lesssim &\frac{\#\C^*_\theta(t,\a)}{e^{ht}m(B_\theta^\a)}\lesssim e^{4h\e}(1+\frac{4\e^2}{\a}),\\
e^{-4h\e}\lesssim &\frac{\#\C_\theta(t,\a)}{e^{ht}m(B_\theta^\a)}\lesssim e^{4h\e}(1+\frac{4\e^2}{\a}).
\end{aligned}
\end{equation*}
\end{proposition}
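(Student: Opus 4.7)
The plan is to run the classical Margulis counting scheme in the form developed in \cite[Section 5.2]{CKW2}, using three ingredients: scaling (Corollary \ref{scaling2}), the product-box containment (Lemma \ref{depth1}), and mixing of the Knieper measure $\lm$ (from \cite{LLW}). The choice $\e<\inj(M)/4$ together with Lemma \ref{diameter}(3) makes $\pi$ injective on $S$ and on every box $B^\beta$ with $\beta\le 3\e/2$; the translates $\{\c B^\beta\}_{\c\in\Gamma}$ are then pairwise disjoint in $SX$, giving the \emph{unfolding identity}
\[
\lm\bigl(\lS\cap\phi^{-s}\lB^\beta\bigr)=\sum_{\c\in\Gamma} m\bigl(S\cap\phi^{-s}\c B^\beta\bigr),
\]
valid for all $s\in\RR$ and $\beta\in(0,3\e/2]$, in which only $\c\in\C(s,\beta)$ contribute. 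By mixing, the left side converges to $\lm(\lS)\lm(\lB^\beta)=m(S)\,m(B^\beta)$ as $s\to\infty$.

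For the \emph{upper bound} on $\#\C^*(t,\a)$, Lemma \ref{depth1} gives $S^\c\subset S\cap\phi^{-(t+2\e^2)}\c B^{\a+4\e^2}$ and Corollary \ref{scaling2} gives $m(S^\c)\ge e^{-4h\e}e^{-ht}m(S)$, so summing and applying the unfolding identity with $s=t+2\e^2$, $\beta=\a+4\e^2$ yields
\[
\#\C^*(t,\a)\le e^{4h\e}\,e^{ht}\,\frac{\lm(\lS\cap\phi^{-(t+2\e^2)}\lB^{\a+4\e^2})}{m(S)}.
\]
Taking $t\to\infty$ and using mixing together with the product formula $m(B^\beta)=\beta\,\bar\mu(\bP\times\bF)$ (so that $m(B^{\a+4\e^2})/m(B^\a)=1+4\e^2/\a$) gives the claimed $\lesssim e^{4h\e}(1+4\e^2/\a)$.

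For the \emph{lower bound} on $\#\C(t,\a)$, I upper-bound each summand in the unfolding identity. For $\c\in\C^*(t,\a)$, Lemma \ref{intersection3} expresses $m(S\cap\phi^{-t}\c B^\a)$ as an integral over $\bP\times\c\bF$ of an interval of length $\le\min(\e^2,\a)$; the scaling computation of Lemma \ref{scaling1} (via Corollary \ref{equicon} and Lemma \ref{intersection2}) together with Lemma \ref{depth} gives the uniform estimate
\[
m(S\cap\phi^{-t}\c B^\a)\le e^{4h\e}\,e^{-ht}\min\{m(S),m(B^\a)\}.
\]
To extend this bound to all $\c\in\C(t,\a)$, I pass to a smaller scale $\rho<\theta$ and invoke the closing Lemma \ref{closing}, which puts $\C_\rho(t,\a)\subset\C^*_\theta$, so the scaling computation at scale $\theta$ dominates every summand. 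Summing and passing to the limit,
\[
\#\C_\rho(t,\a)\cdot e^{4h\e}e^{-ht}\min\{m(S_\rho),m(B_\rho^\a)\}\ge \lm\bigl(\lS_\rho\cap\phi^{-t}\lB_\rho^\a\bigr)\to m(S_\rho)m(B_\rho^\a),
\]
so $\#\C_\rho(t,\a)/(e^{ht}m(B_\rho^\a))\gtrsim e^{-4h\e}$ (the cases $\a\le\e^2$ and $\a\ge\e^2$ both collapse to this via the $\min$).

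The remaining two estimates come from transferring via the closing lemma together with the continuity choice \eqref{e:choice}. Since $\C^*\subset\C$, the lower bound on $\#\C_\rho$ above yields the lower bound on $\#\C^*_\theta$ once we pass $\rho\uparrow\theta$, using $m(B^\a_\rho)\to m(B^\a_\theta)$; conversely, pick $\theta'>\theta$ so that $\C_\theta(t,\a)\subset\C^*_{\theta'}(t,\a)$ by Lemma \ref{closing}, apply the upper-bound argument at scale $\theta'$, and let $\theta'\downarrow\theta$. The main obstacle is the lower-bound step: the scaling estimate for $m(S\cap\phi^{-t}\c B^\a)$ was derived in Lemma \ref{scaling1} only for $\c\in\C^*$ (where the axis of $\c$ is known to lie in $\bP\times\bF$), and the closing lemma is precisely what allows that estimate to apply to every $\c\in\C(t,\a)$ after a mild scale reduction.
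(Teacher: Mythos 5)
Your proposal is correct and follows essentially the same route as the paper: the sandwich between $\lm(\lS_\rho\cap\phi^{-t}\lB_\rho^\a)$ and $\lm(\lS_\theta\cap\phi^{-(t+2\e^2)}\lB_\theta^{\a+4\e^2})$ via Lemmas \ref{closing} and \ref{depth1}, the scaling estimate of Corollary \ref{scaling2}, mixing of $\lm$, and the continuity choice \eqref{e:choice} with $\rho\nearrow\theta$ for $\C^*_\theta$ and $\rho\searrow\theta$ for $\C_\theta$. The only cosmetic difference is that you bound each summand $m(S\cap\phi^{-t}\c B^\a)$ directly through Lemma \ref{intersection3}, whereas the paper packages the same estimate as the inclusion $S\cap\phi^{-t}\c B^\a\subset S^\c$; also note that the correct justification for the lower bound on $\#\C^*_\theta(t,\a)$ is the inclusion $\C_\rho(t,\a)\subset\C^*_\theta(t,\a)$ from the closing lemma (not merely $\C^*\subset\C$), which you do invoke.
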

\begin{proof}
Recall that $\a\in (0, \frac{3\e}{2}]$. By Lemmas \ref{closing} and \ref{depth1}, for any $0<\rho<\theta$ and $t$ large enough, we have
$$\lS_\rho\cap \phi^{-t}\lB_\rho^{\a}\subset \bigcup_{\c\in \C^*_\theta(t,\a)}\lS_\theta^\c \subset \lS_\theta \cap \phi^{-(t+2\e^2)}\lB_\theta^{\a+4\e^2}.$$
By Corollary \ref{scaling2}, $\lm(\lS_\theta^\c)=e^{\pm 4h\e}e^{-ht}\lm(\lS_\theta).$ Thus we have
\begin{equation*}
\begin{aligned}
e^{-4h\e}\lm(\lS_\rho\cap \phi^{-t}\lB_\rho^\a)&\le \#\C^*_\theta(t,\a)e^{-ht}\lm(\lS_\theta)\\
&\le e^{4h\e}\lm(\lS_\theta\cap \phi^{-(t+2\e^2)}\lB_\theta^{\a+4\e^2}).
\end{aligned}
\end{equation*}
Dividing by $\lm(\lS_\theta)\lm(\lB_\theta^\a)$ and using mixing of $\lm$, we get
\begin{equation}\label{e:sim}
\begin{aligned}
e^{-4h\e}\frac{m(S_\rho)m(B_\rho^\a)}{m(S_\theta)m(B_\theta^\a)} \lesssim \frac{\#\C^*_\theta(t,\a)}{e^{ht}m(B_\theta^\a)}
\lesssim e^{4h\e}\frac{m(B_\theta^{\a+4\e^2})}{m(B_\theta^\a)}.
\end{aligned}
\end{equation}
By \eqref{e:choice}, letting $\rho \nearrow \theta$, we obtain the first equation in the proposition.

To prove the second equation, we consider $\theta<\rho< \theta_0$. Then by Lemma \ref{closing}, $\C^*_\theta(t,\a)\subset \C_\theta(t,\a) \subset \C^*_\rho(t,\a).$ By \eqref{e:sim},
\begin{equation*}
\begin{aligned}
e^{-4h\e}\frac{m(S_\rho)m(B_\rho^\a)}{m(S_\theta)m(B_\theta^\a)} &\lesssim \frac{\#\C^*_\theta(t,\a)}{e^{ht}m(B_\theta^\a)}\lesssim \frac{\#\C_\theta(t,\a)}{e^{ht}m(B_\theta^\a)}\\
&\lesssim \frac{\#\C^*_\rho(t,\a)}{e^{ht}m(B_\theta^\a)}\lesssim e^{4h\e}\frac{m(B_\rho^{\a+4\e^2})}{m(B_\theta^\a)}.
\end{aligned}
\end{equation*}
Letting $\rho\searrow \theta$ and by \eqref{e:choice}, we get the second equation in the proposition.
\end{proof}

\section{Measuring along periodic orbits}
Recall that $C(t)$ is any maximal set of pairwise non-free-homotopic closed geodesics with length $(t-\e,t]$ in $M$. We obtain in this section an upper bound and a lower bound respectively for $\#C(t)$.
\subsection{Upper bound for $\#C(t)$}
By definition of $\nu_t$, we have
$$\#C(t)=\frac{\sum_{\lc\in C(t)} Leb_\lc(\lB_\theta^\a)}{t\nu_t(\lB_\theta^\a)}.$$
Define
$$\Pi(t):=\{\dot{\lc}(s)\in \pr H^{-1}(\bP\times \bF)\times \{0\}: \lc\in C(t), s\in \RR\}.$$
Then we have
\begin{equation}\label{e:ct}
\begin{aligned}
\#C(t)=\frac{\a\#\Pi(t)}{t\nu(B_\theta^\a)}.
\end{aligned}
\end{equation}

Now we define a map $\Theta: \Pi(t)\to \C(t,\e)$ as follows. Given $\lv\in \Pi(t)$, let $\ell=\ell(\lv)\in (t-\e,t]$ be such that $\phi^\ell\lv=\lv$. Let $v$ be the unique lift of $\lv$ such that $v\in H^{-1}(\bP\times \bF)\times \{0\}\subset B_\theta^\a$.
Define $\Theta(v)$ to be the unique axial isometry of $X$ such that $\phi^\ell v=\Theta(v) v$. Then $|\Theta(v)|=\ell$. If $\c=\Theta(v)$, then $\phi^tv=\phi^{t-\ell}\c v\in \c B_\theta^\e$. So $v\in S_\theta\cap \phi^{-t}\c B_\theta^\e$, and we get
\begin{equation}\label{e:theta}
\begin{aligned}
\Theta(\Pi(t))\subset \C(t,\e).
\end{aligned}
\end{equation}

To estimate $\#\Pi(t)$, we first show that $\Theta$ is injective.
\begin{lemma}\label{injective}
$\Theta$ is injective.
\end{lemma}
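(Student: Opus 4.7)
The plan is to suppose $\Theta(\lv_1) = \Theta(\lv_2) = \c$ for some $\lv_1, \lv_2 \in \Pi(t)$ and show that the unique lifts $v_1, v_2 \in H^{-1}(\bP \times \bF \times \{0\})$ coincide. By construction $\phi^{\ell_i} v_i = \c v_i$ with $\ell_i \in (t-\e, t]$, so both $c_{v_1}$ and $c_{v_2}$ are axes of $\c$. Since $v_i^- \in \bP$ and $v_i^+ \in \bF$, Proposition \ref{crucial} guarantees that each axis is regular (rank one). Because an axial isometry translates along any of its axes by the same minimal positive length, it follows that $\ell_1 = \ell_2 = |\c|$.

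Next I would establish bi-asymptoticity of $c_{v_1}$ and $c_{v_2}$. Setting $f(t) := d(c_{v_1}(t), c_{v_2}(t))$, the relation $\c c_{v_i}(t) = c_{v_i}(t+|\c|)$ combined with the isometric action of $\c$ gives
\begin{equation*}
f(t+|\c|) = d(\c c_{v_1}(t), \c c_{v_2}(t)) = f(t),
\end{equation*}
so $f$ is $|\c|$-periodic and hence bounded on all of $\RR$. This yields bi-asymptoticity of $c_{v_1}$ and $c_{v_2}$.

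The Flat Strip Lemma (Lemma \ref{nonincreasing}(4)) then produces an isometric embedding $\Psi \colon [0,a] \times \RR \to X$ with $\Psi(0, \cdot)$ and $\Psi(a, \cdot)$ equal to $c_{v_1}$ and $c_{v_2}$ up to parametrization. The principal obstacle is ruling out $a > 0$: if $a > 0$, then $\Psi_*(\partial/\partial x)$ restricted to $c_{v_1}$ is a nonzero parallel vector field perpendicular to $\dot c_{v_1}$, and since the curvature vanishes on the flat strip, it is a nontrivial perpendicular parallel Jacobi field. Together with the tangential field $\dot c_{v_1}$ this forces $\text{rank}(v_1) \geq 2$, contradicting the regularity of $c_{v_1}$. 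Hence $a = 0$, so $c_{v_1}$ and $c_{v_2}$ coincide as oriented geodesics, giving $v_2 = \phi^{t_0} v_1$ for some $t_0 \in \RR$.

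Finally, since $s(\phi^{t_0} v_1) = s(v_1) + t_0$ and both lifts satisfy $s(v_i) = 0$ by construction, we obtain $t_0 = 0$. Hence $v_1 = v_2$ and consequently $\lv_1 = \lv_2$. The crux of the argument is the rank-one rigidity step via the Flat Strip Lemma; the remaining steps are routine manipulations of the Hopf coordinates and the $\c$-equivariance of the geodesic flow.
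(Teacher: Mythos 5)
Your proof is correct and follows essentially the same route as the paper: show the two axes of $\c$ are bi-asymptotic, invoke the Flat Strip Lemma together with regularity of vectors in $B_\theta^\a$ (Proposition \ref{crucial}) to rule out a genuine strip, and then use the normalization $s(v)=0$ to pin down the lift. The only cosmetic difference is that you obtain bi-asymptoticity from the $|\c|$-periodicity of $t\mapsto d(c_{v_1}(t),c_{v_2}(t))$, whereas the paper argues that $\c^n c_v(0)$ and $\c^n c_w(0)$ stay a bounded distance apart and hence converge to the same point at infinity.
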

\begin{proof}
Suppose that $\lv, \lw\in \Pi(t)$ are such that $\Theta(\lv)=\Theta(\lw):=\c$.
Let $v,w\in B_\theta^\a$ be the lifts of $\lv, \lw$ respectively. Then by definition, both $c_v$ and $c_w$ are axes of $\c$.

We claim that $v^+=w^+$ and $v^-=w^-$. Indeed, first notice that $\c^n c_v(0)\to v^+$ and $\c^n c_w(0)\to w^+$. On the other hand,
$$d(\c^n c_v(0), \c^nc_w(0))=d(c_v(0), c_w(0))$$
for all $n\ge 0$. So $\c^n c_v(0)$ and $\c^n c_w(0)$ converge to the same point at infinity by the property of no focal points. Thus we get $v^+=w^+$. Similarly, $v^-=w^-$.

It follows that $c_v$ and $c_w$ are bi-asymptotic. If $c_v$ and $c_w$ are geometrically distinct, then they bound a flat strip by Lemma \ref{nonincreasing}(4). So $v$ and $w$ are singular vectors, which is a contradiction by the definition of $B_\theta^\a$ and Proposition \ref{crucial}. So $v$ and $w$ lie on a common geodesic. As $v,w\in H^{-1}(\bP\times \bF\times \{0\})$, we have $v=w$ and hence $\lv=\lw$. So $\Theta$ is injective.
\end{proof}

\begin{proposition}\label{upper}
We have
$$\#C(t)\le \frac{\e \#\C(t,\e)}{t\nu_t(\lB^\e)}.$$
\end{proposition}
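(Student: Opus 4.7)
The plan is to simply combine the identity \eqref{e:ct}, the containment \eqref{e:theta}, and the injectivity of $\Theta$ from Lemma \ref{injective}, all specialised to the case $\a = \e$.

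First I would specialise \eqref{e:ct} to depth $\a=\e$, which gives
\[
\#C(t) \;=\; \frac{\e\,\#\Pi(t)}{t\,\nu_t(\lB_\theta^{\e})}.
\]
So the upper bound for $\#C(t)$ reduces to an upper bound for $\#\Pi(t)$.

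Next, recall that $\Theta\colon \Pi(t)\to \C(t,\e)$ was constructed so that $\Theta(\Pi(t))\subset \C(t,\e)$ (this is precisely \eqref{e:theta}), and by Lemma \ref{injective} the map $\Theta$ is injective. Therefore
\[
\#\Pi(t) \;=\; \#\Theta(\Pi(t)) \;\le\; \#\C(t,\e).
\]

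Substituting this into the displayed identity above yields
\[
\#C(t) \;=\; \frac{\e\,\#\Pi(t)}{t\,\nu_t(\lB_\theta^{\e})} \;\le\; \frac{\e\,\#\C(t,\e)}{t\,\nu_t(\lB_\theta^{\e})},
\]
which is the claimed inequality. There is no serious obstacle here: all of the work has already been done in establishing \eqref{e:ct}, \eqref{e:theta}, and the injectivity of $\Theta$ (the latter being the only nontrivial input, relying on the Flat Strip Lemma and Proposition \ref{crucial} to rule out that two distinct lifts in $B_\theta^\e$ could determine the same deck transformation). The proposition is thus essentially a bookkeeping consequence of the preceding lemmas.
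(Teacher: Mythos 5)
Your proof is correct and is exactly the paper's argument: the proposition is deduced by combining the identity \eqref{e:ct} with $\a=\e$, the inclusion $\Theta(\Pi(t))\subset\C(t,\e)$ from \eqref{e:theta}, and the injectivity of $\Theta$ from Lemma \ref{injective}. No differences to note.
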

\begin{proof}
The proposition follows from \eqref{e:ct}, \eqref{e:theta} and Lemma \ref{injective}.
\end{proof}
\subsection{Lower bound for $\#C(t)$}

First we deal with the multiplicity of $\c\in \C$. Given $\c\in \C$, let $d=d(\c)\in \NN$ be maximal such that $\c= \b^d$ for some $\b\in \C$. $\c\in \C$ is called \emph{primitive} if $d(\c)=1$, i.e., $\c\neq \b^d$ for any $\b\in \C$ and any $d\ge 2$.

Define $\C_2(\bP, \bF, t)$ to be the set of all $\c\in \C$ such that
\begin{enumerate}
  \item $\c$ has an axis $c$ with $c(-\infty)\in \bP, c(\infty)\in \bF$;
  \item $|\c|\in (t-\e,t]$;
  \item $d(\c)\ge 2$.
\end{enumerate}

\begin{lemma}\label{multi}
There exists $K>0$ such that for any $t>0$ we have
$$\sum_{\c\in \C_2(\bP, \bF, t)}d(\c)\le Ke^{\frac{2}{3}ht}.$$
\end{lemma}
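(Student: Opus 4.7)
The plan is to parametrize the contributions to $\sum_\c d(\c)$ by pairs (divisor, root) and then invoke Proposition~\ref{asymptotic}. For $\c \in \C_2(\bP, \bF, t)$ with $d(\c) = d_0 \ge 2$, to each divisor $d \ge 2$ of $d_0$ corresponds a unique $\b \in \C$ with $\b^d = \c$: any two such elements must share the rank one axis of $\c$ provided by Proposition~\ref{crucial} and translate by $|\c|/d$ along it, hence coincide. Since $d_0$ itself is a divisor $\ge 2$ of $d_0$, we have $d(\c) \le \sum_{d \mid d_0,\, d \ge 2} d$, so
\[
\sum_{\c \in \C_2(\bP, \bF, t)} d(\c) \;\le\; \sum_{d \ge 2} d \cdot N_d(t), \qquad N_d(t) := \#\{\b \in \C : \b^d \in \C_2(\bP, \bF, t)\}.
\]

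Next I would estimate $N_d(t)$ by reducing it to the counting provided by Proposition~\ref{asymptotic}. If $\b^d \in \C_2(\bP, \bF, t)$, then $\b$ shares its axes with $\b^d$, so $\b$ has an axis with endpoints in $\bP$ and $\bF$, and $|\b| = |\b^d|/d \in ((t-\e)/d, t/d]$. Taking the axis vector $v$ at the basepoint where $s(v) = 0$, we have $v \in H^{-1}(\bP \times \bF \times \{0\}) \subset S_\theta$ and $\phi^{|\b|} v = \b v \in \b B_\theta^\a$, so $\b \in \C_\theta(|\b|, \a)$ for any $\a>0$. Compactness of $M$ gives a positive lower bound $\ell_{\min}$ on the translation length of every axial element of $\C$, so only $2 \le d \le t/\ell_{\min}$ can contribute. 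Covering the window $((t-\e)/d, t/d]$ by boundedly many length-$\e$ intervals and summing the asymptotic bound $\#\C_\theta(s, \a) \le C' e^{hs}$ from Proposition~\ref{asymptotic} (with the finitely many axial elements of very short translation length absorbed into the constant) yields $N_d(t) \le C_1 e^{ht/d}$ with $C_1$ independent of $t$ and $d$.

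Combining these ingredients gives
\[
\sum_{\c \in \C_2(\bP, \bF, t)} d(\c) \;\le\; C_1 \sum_{d=2}^{\lfloor t/\ell_{\min}\rfloor} d \, e^{ht/d}.
\]
The $d=2$ term contributes $2 C_1 e^{ht/2}$ and dominates, while the remaining terms with $d \ge 3$ contribute at most $C_1 (t/\ell_{\min})^2 e^{ht/3}$. Since $t^2 e^{ht/2} \le e^{\frac{2}{3}ht}$ for $t$ large, one obtains $\sum_\c d(\c) \le K e^{\frac{2}{3}ht}$ with $K$ enlarged to handle any bounded initial range of $t$. The step I expect to be most delicate is proving a genuinely uniform-in-$d$ bound $N_d(t) \le C_1 e^{ht/d}$: Proposition~\ref{asymptotic} is only an asymptotic in $t$, so both the small-$s$ regime (where $s = t/d$ is comparable to $\ell_{\min}$) and the scale-$\e$ covering of the window $((t-\e)/d, t/d]$ need to be handled uniformly, using the compactness of $M$ to control the finitely many short-axis contributions.
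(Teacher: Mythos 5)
Your argument is correct, and it reaches the bound by a genuinely different route from the paper's. The paper's proof is essentially a reduction to \cite[Lemma 4.5]{CKW2}: the only new content it supplies is the structural step that a root $\b$ of $\c\in\C_2(\bP,\bF,t)$ inherits the unique rank one axis of $\c$ (via the flat strip argument from Lemma \ref{injective}), after which the quantitative counting of roots is done as in \cite{CKW2} using the Freire--Ma\~n\'e volume entropy formula $h=\lim_{r\to\infty}\frac1r\log\Vol(B(q,r))$, i.e.\ the orbit count $\#\{\b\in\C: d(\b p,p)\le R\}\lesssim e^{hR}$. You handle the structural step in exactly the same way (uniqueness of the axis, the root translating along it, plus freeness of the deck group to get uniqueness of the $d$-th root), but you replace the volume-growth input by Proposition \ref{asymptotic}: each root $\b$ with $|\b|\in((t-\e)/d,t/d]$ lies in $\C_\theta(s,\a)$ for $s$ in an interval of length $\a$ above $|\b|$, so covering the window by boundedly many such $s$ gives $N_d(t)\lesssim e^{ht/d}$, and the sum over $2\le d\le t/\ell_{\min}$ is dominated by $t^2e^{ht/2}\le e^{\frac23 ht}$. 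This buys self-containedness (no appeal to \cite{FrMa} or to the external counting lemma), at the price of the uniformity issue you correctly flag: Proposition \ref{asymptotic} is only a $\limsup$ statement, so you must convert it to a bound $\#\C_\theta(s,\a)\le C e^{hs}$ valid for \emph{all} $s>0$. That conversion does work --- any $\c\in\C_\theta(s,\a)$ satisfies $d(\c p,p)\le s+O(\e)$ by Lemma \ref{diameter}, so proper discontinuity bounds $\#\C_\theta(s,\a)$ uniformly on any compact range of $s$ --- but you should state it explicitly rather than gesture at ``finitely many short-axis contributions''. With that point made precise, your proof is a valid and arguably cleaner alternative to the one in the paper.
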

\begin{proof}
Since $X$ is a simply connected manifold with no focal points, we have by \cite{FrMa}
$$h=\lim_{r\to \infty}\frac{1}{r}\log \text{Vol} (B(q,r))$$
where $B(q,r)$ is the ball of radius $r$ around $q$ in $X$. So the proof is almost analogous to that in \cite[Lemma 4.5]{CKW2} with only minor modifications as follows.

If $\c=\b(\c)^{d(\c)}$ for some $\b(\c)\in \C$, we must argue that $\b(\c)$ has an axis with endpoints in $\bP$ and $\bF$, so that we can choose $v\in H^{-1}(\bP\times \bF\times \{0\})$ tangent to such an axis with $\phi^{|\b(\c)|}v=\b(\c)v$. We cannot use \cite[Lemma 2.10]{CKW2} in our setting. Nevertheless, since $\c$ has a rank one axis with endpoints in $\bP$ and $\bF$, we know from the proof of Lemma \ref{injective} that this is the only axis for $\c$. Every axis of $\b(\c)$ is an axis of $\c$, so  $\b(\c)$ also has a unique axis, which is the axis of $\c$, as we want.

Repeat the remaining part of the proof of \cite[Lemma 4.5]{CKW2} and we are done.
\end{proof}

Recall that $\C'(t,\a):=\{\c\in \C^*(t,\a): \c\neq \b^n \text{\ for any\ } \b\in \C, n\ge 2\}$.

\begin{lemma}\label{lower}
Consider $\a=\e-4\e^2$. Then $\Theta(\Pi(t))\supset \C'(t-2\e^2, \a)$ and
$$\#C(t)\ge \frac{\a \#\C'(t-2\e^2,\a)}{t\nu_t(\lB^\a)}.$$
\end{lemma}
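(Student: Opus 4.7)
The plan is to first establish the set inclusion $\Theta(\Pi(t))\supset \C'(t-2\e^2,\a)$, then deduce the asserted lower bound on $\#C(t)$ from \eqref{e:ct} together with the injectivity of $\Theta$ (Lemma~\ref{injective}). Together with Lemma~\ref{depth}, the specific value $\a=\e-4\e^2$ is engineered precisely so that the lengths of elements of $\C^*(t-2\e^2,\a)$ land in the correct interval $(t-\e,t]$.

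To prove the inclusion, fix $\c\in \C'(t-2\e^2,\a)$. First, Lemma~\ref{depth} applied with parameters $(t-2\e^2,\a)$ gives
\[|\c|\in[\,t-2\e^2-\a-\e^2,\ t-2\e^2+2\e^2\,]=[\,t-\e+\e^2,\,t\,]\subset(t-\e,t\,].\]
Next I will show $\c$ has a unique rank one axis with endpoints in $\bP$ and $\bF$. Since $\c\in \C^*$, the inclusions $\c\bF\subset \bF$ and $\c^{-1}\bP\subset\bP$, combined with the fact that $\bF$ and $\bP$ are closed topological disks, supply by the Brouwer fixed-point theorem a fixed point $\eta\in \bF$ of $\c$ and a fixed point $\xi\in \bP$ of $\c^{-1}$ (hence of $\c$). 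Proposition~\ref{crucial} then furnishes a unique, necessarily regular, geodesic $c$ joining $\xi$ to $\eta$; since $\c$ fixes both endpoints it permutes the geodesics connecting them, and uniqueness forces $\c(c)=c$, so $c$ is an axis of $\c$. The same uniqueness statement makes $c$ the only axis of $\c$.

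Next I produce the preimage in $\Pi(t)$. Every axis of a conjugate $\tau\c\tau^{-1}$ equals $\tau(c)$ and projects to $\lc:=\pr c$, so $\lc$ is the unique closed geodesic in its free-homotopy class. Since $\lc$ has length $|\c|\in(t-\e,t]$ and is primitive (because $\c$ is), any maximal set $C(t)$ of pairwise non-free-homotopic closed geodesics of length in $(t-\e,t]$ must contain $\lc$. I then pick $v$ on $c$ with $s(v)=b_{c(-\infty)}(\pi v,p)=0$, so $v\in H^{-1}(\bP\times\bF\times\{0\})$, and set $\lv:=\pr v\in\Pi(t)$. By construction $\phi^{|\c|}v=\c v$, hence $\Theta(\lv)=\c$, which proves the inclusion.

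Finally, combining $\Theta(\Pi(t))\supset\C'(t-2\e^2,\a)$ with the injectivity of $\Theta$ (Lemma~\ref{injective}) gives $\#\Pi(t)\ge\#\C'(t-2\e^2,\a)$, and substituting into \eqref{e:ct} yields the stated inequality. The main delicate point is the uniqueness of the axis $c$, where the rank one hypothesis is essential: without it the free-homotopy class of $\lc$ could contain further closed geodesics, any one of which might be the chosen $C(t)$-representative, so the preimage construction would not automatically produce an element of $\Pi(t)$ with $\Theta$-image equal to the original $\c$.
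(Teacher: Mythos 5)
Your proposal follows the same route as the paper's proof: use Lemma \ref{depth} to place $|\c|$ in $(t-\e,t]$, produce a vector $v\in H^{-1}(\bP\times\bF\times\{0\})$ tangent to an axis of $\c$, show the projected closed geodesic is the \emph{only} closed geodesic in its free-homotopy class (hence belongs to $C(t)$, giving $\lv\in\Pi(t)$ with $\Theta(\lv)=\c$), and conclude from \eqref{e:ct}. Your Brouwer fixed-point argument makes explicit the existence step that the paper leaves implicit, and that part is sound.

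The one step that does not hold as written is ``the same uniqueness statement makes $c$ the only axis of $\c$.'' Proposition \ref{crucial} only asserts uniqueness among geodesics joining a point of $\bP$ to a point of $\bF$; it does not exclude that $\c$ fixes other points of $\pX$ outside $\bP\cup\bF$ and has further axes joining those, so your subsequent claim that every axis of a conjugate $\tau\c\tau^{-1}$ equals $\tau(c)$ is not yet justified. The repair is the argument the paper actually uses here (and in Lemma \ref{injective}): any two axes of $\c$ --- equivalently, any lift of another closed geodesic in the free-homotopy class, chosen to be an axis of $\c$ itself --- stay a bounded distance apart under the iterates of $\c$, hence are bi-asymptotic, hence bound a flat strip by Lemma \ref{nonincreasing}(4), contradicting the regularity of $c$ guaranteed by Proposition \ref{crucial}. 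With that substitution your proof coincides with the paper's. (A cosmetic remark: the injectivity of $\Theta$ is not needed for the final inequality, since the inclusion $\Theta(\Pi(t))\supset\C'(t-2\e^2,\a)$ already yields $\#\Pi(t)\ge\#\C'(t-2\e^2,\a)$.)
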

\begin{proof}
Let $\c\in \C'(t-2\e^2, \a)$.
Then there exists $v\in H^{-1}(\bP\times \bF\times \{0\})$ such that $\phi^{|\c|}v=\c v$.
By Lemma \ref{depth}, we have
\begin{equation*}
\begin{aligned}
|\c|&\ge (t-2\e^2)-\a-\e^2=t-\e+\e^2>t-\e,\\
|\c|&\le (t-2\e^2)+2\e^2=t.
\end{aligned}
\end{equation*}
It follows that $\underline c_{\lv}$ is a closed geodesic with length $|\c|\in (t-\e,t]$.
Note that if $\underline c$ is another closed geodesic in the free-homotopic class of $\underline c_{\lv}$, then we can lift $\underline c$ to a geodesic $c$ such that $c$ and $c_v$ are bi-asymptotic. So $c$ and $c_v$ bound a flat strip by Lemma \ref{nonincreasing}(4), which is a contradiction since $v$ is rank one. It follows that $\underline c_{\lv}$ is the only geodesic in its free-homotopic class. Thus $\underline c_{\lv}\in C(t)$.

As a consequence, $v\in \Pi(t)$ and $\c=\Theta(v)$. So $\Theta(\Pi(t))\supset \C'(t-2\e^2, \a)$ and thus by \eqref{e:ct},
$$\#C(t)\ge \frac{\a \#\C'(t-2\e^2,\a)}{t\nu_t(\lB^\a)}.$$
\end{proof}

\begin{proposition}\label{lower1}
Consider $\a=\e-4\e^2$. We have
$$\#C(t)\ge \frac{\a }{t\nu_t(\lB^\a)}\cdot (\#\C^*(t-2\e^2,\a)-Ke^{\frac{2}{3}ht}).$$
\end{proposition}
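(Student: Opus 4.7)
The strategy is to combine Lemma \ref{lower} with the multiplicity bound of Lemma \ref{multi}. Since Lemma \ref{lower} already gives
$$\#C(t)\ge \frac{\a \#\C'(t-2\e^2,\a)}{t\nu_t(\lB^\a)},$$
it suffices to establish
$$\#\C'(t-2\e^2,\a) \ge \#\C^*(t-2\e^2,\a) - Ke^{\frac{2}{3}ht}.$$
The set $\C^*(t-2\e^2,\a)\setminus \C'(t-2\e^2,\a)$ consists precisely of the non-primitive elements of $\C^*(t-2\e^2,\a)$, so the key reduction is the inclusion
$$\C^*(t-2\e^2,\a)\setminus \C'(t-2\e^2,\a)\subset \C_2(\bP, \bF, t),$$
after which Lemma \ref{multi} yields
$$\#\bigl(\C^*(t-2\e^2,\a)\setminus \C'(t-2\e^2,\a)\bigr)\le \sum_{\c\in \C_2(\bP,\bF,t)}d(\c)\le Ke^{\frac{2}{3}ht}.$$

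To verify the three conditions defining $\C_2(\bP,\bF,t)$ for a given non-primitive $\c\in \C^*(t-2\e^2,\a)$: the multiplicity condition $d(\c)\ge 2$ is immediate, and the length condition $|\c|\in (t-\e,t]$ follows from Lemma \ref{depth} applied with parameter $t-2\e^2$, which gives $|\c|\in [t-\e+\e^2,\,t]\subset (t-\e,t]$ after substituting $\a=\e-4\e^2$. The remaining condition---existence of an axis with endpoints in $\bP$ and $\bF$---is the main technical point, and I would argue as follows. Since $\c\in \C^*$, the map $\c:\bF\to \bF$ is a continuous self-map of a set homeomorphic (via $f_p^{-1}$) to the closed ball $\{w\in S_pX : \angle_p(w,v_0)\le \theta\}$, so Brouwer's theorem produces a fixed point $\eta\in \bF$; the symmetric argument for $\c^{-1}:\bP\to \bP$ yields a fixed point $\xi\in \bP$ of $\c$. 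Proposition \ref{crucial} then provides a unique rank-one geodesic $c$ from $\xi$ to $\eta$. Because $\c\circ c$ is another geodesic from $\xi$ to $\eta$, uniqueness forces $\c c(s)=c(s+t_0)$---the orientation-reversing alternative would collapse $\xi=\eta$, contradicting the disjointness of $\bP$ and $\bF$ (guaranteed for small $\theta$). Finally $t_0\ne 0$ since $\Gamma$ acts freely on $X$ while $|\c|>0$. Thus $c$ is an axis of $\c$ with the required endpoints, and the inclusion is proved.

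The main obstacle is precisely this axis-endpoint verification. In the nonpositive curvature or CAT$(0)$ setting one would invoke the north-south dynamics of $\c$ on $\pX$ coming from the Morse Lemma, but that tool is unavailable here (cf.\ the introduction and Remark \ref{fact1}). The Brouwer fixed point argument above, combined with the uniqueness of rank-one connecting geodesics from Proposition \ref{crucial}, is the replacement that survives in the no-focal-points category and makes the proof go through.
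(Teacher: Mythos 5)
Your decomposition is exactly the paper's: invoke Lemma \ref{lower}, reduce to the inclusion $\C^*(t-2\e^2,\a)\setminus \C'(t-2\e^2,\a)\subset \C_2(\bP,\bF,t)$, and apply Lemma \ref{multi}. The paper's own proof is just these three lines, delegating the length and axis conditions to the proof of Lemma \ref{lower} (which in turn asserts without proof that every $\c\in\C^*(t-2\e^2,\a)$ admits $v\in H^{-1}(\bP\times\bF\times\{0\})$ with $\phi^{|\c|}v=\c v$); your Brouwer fixed-point argument combined with the uniqueness part of Proposition \ref{crucial} is the correct way to supply that missing justification, and your verification of the length condition via Lemma \ref{depth} matches the paper's computation.

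One loose end in your axis argument: knowing only $t_0\neq 0$ does not place the endpoints correctly. If $t_0<0$, then the axis of $\c$ (in the paper's convention, which requires a positive translation) is the reversed geodesic $s\mapsto c(-s)$, whose $(-\infty)$-endpoint lies in $\bF$ and $(+\infty)$-endpoint in $\bP$, so condition (1) of $\C_2(\bP,\bF,t)$ would fail. To rule this out you must use membership in $\C(t-2\e^2,\a)$, not just in $\C^*$: by Lemmas \ref{intersection3} and \ref{intersection1}, $b_\xi^\c\ge (t-2\e^2)-\a-\e^2>0$ for $t$ large, whereas $t_0<0$ would give $b_\xi^\c=-|\c|<0$ by the computation of Lemma \ref{intersection2} applied to the reversed axis. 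With that sentence added, the proof is complete and coincides with the paper's intended argument.
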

\begin{proof}
From the proof of Lemma \ref{lower}, we also see that $|\c|\in (t-\e,t]$ if $\c\in \C^*(t-2\e^2,\a)$. Thus
$$\C^*(t-2\e^2,\a)\setminus \C'(t-2\e^2,\a)\subset \C_2(\bP, \bF, t).$$
Then by Lemma \ref{multi},
$$\#\C^*(t-2\e^2,\a)- \#\C'(t-2\e^2,\a)\le Ke^{\frac{2}{3}ht}.$$
This together with Lemma \ref{lower} proves the proposition.
\end{proof}

\section{Equidistribution and completion of the proof}
The following result is standard in ergodic theory, which is a corollary of the classical proof of variational principle \cite[Theorem 9.10]{W}.
\begin{lemma}\cite[Proposition 4.3.12]{FH}\label{equilemma}
Let $Y$ be a compact metric space and $\phi$ a continuous flow on $Y$. Fix $\e > 0$ and suppose that $E_t\subset Y$ is a $(t,\e)$-separated set for all sufficiently large $t$. Define the measures $\mu_t$ by
$$\mu_t(A) :=\frac{1}{\#E_t}\sum_{v\in E_t}\frac{1}{t}\int_0^t\chi_A(\phi^sv)ds.$$
If $t_k\to \infty$ and the weak$^*$ limit $\mu=\lim_{k\to \infty}\mu_{t_k}$
exists, then
$$h_\mu(\phi^1)\ge \limsup_{k\to \infty}\frac{1}{t_k}\log \#E_{t_k}.$$
\end{lemma}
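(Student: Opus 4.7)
This is the hard half of the variational principle, adapted to empirical measures along a $(t,\e)$-separated set of a continuous flow. I would follow the classical Misiurewicz argument (Walters, Theorem~8.6), performed for the time-one map $\phi^1$ and transferred back to the flow by averaging over one time unit.

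\emph{Step 1 (good partition).} Pick a finite Borel partition $\mathcal{Q}=\{Q_1,\dots,Q_k\}$ of $Y$ with $\mathrm{diam}(Q_i)<\e/2$ and $\mu(\partial Q_i)=0$ for every $i$. Such a $\mathcal{Q}$ exists because, for any centre $y\in Y$, the set of radii $r>0$ with $\mu(\partial B(y,r))>0$ is countable; assemble $\mathcal{Q}$ out of intersections of small balls whose boundary spheres are $\mu$-null. The null-boundary condition guarantees weak$^*$ continuity of $\nu\mapsto H_\nu(\mathcal{Q}^n)$ at $\mu$.

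\emph{Step 2 (continuous-to-discrete separation).} Set $N:=\lfloor t\rfloor$ and $\sigma_t:=\frac{1}{\#E_t}\sum_{v\in E_t}\delta_v$. Uniform continuity of $(s,v)\mapsto\phi^s v$ on $[0,1]\times Y$ produces a modulus $\delta<\e/2$ such that $d(v,w)<\delta$ and $s\in[0,1]$ imply $d(\phi^s v,\phi^s w)<\e/2$. Hence any $(t,\e)$-separated set is, at a loss that is absorbed by the partition diameter, $(N,\e/2)$-separated for $\phi^1$, so the $(t,\e)$-separation witness at some $s_0\in[0,t]$ can be replaced by a nearby integer $j\in\{0,\dots,N-1\}$. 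Combined with $\mathrm{diam}(\mathcal{Q})<\e/2$, this shows that distinct elements of $E_t$ lie in distinct atoms of $\mathcal{Q}^N:=\bigvee_{j=0}^{N-1}\phi^{-j}\mathcal{Q}$; therefore $H_{\sigma_t}(\mathcal{Q}^N)=\log\#E_t$.

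\emph{Step 3 (Misiurewicz averaging).} Put $\tau_t:=\frac{1}{N}\sum_{j=0}^{N-1}\phi^j_*\sigma_t$. The standard averaging identity (combine subadditivity of $H_{\sigma_t}$ across the $n$ cosets $j\bmod n$ with concavity of $\nu\mapsto H_\nu(\mathcal{Q}^n)$ in $\nu$ and Jensen) applied to $\sigma_t$, $\mathcal{Q}^N$, and any fixed $n$ yields
\[
\frac{1}{n}H_{\tau_t}(\mathcal{Q}^n)\ge \frac{1}{N}H_{\sigma_t}(\mathcal{Q}^N)-\frac{2n\log k}{N}=\frac{\log\#E_t}{N}-\frac{2n\log k}{N}.
\]

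\emph{Step 4 (limit).} Uniform continuity of the flow on $[0,1]\times Y$ gives $\mu_t(f)-\tau_t(f)\to 0$ for every $f\in C(Y)$, so $\tau_{t_k}\to\mu$ weakly as well. Because $\mu(\partial \mathcal{Q}^n)=0$ for every $n$, weak$^*$ convergence entails $H_{\tau_{t_k}}(\mathcal{Q}^n)\to H_\mu(\mathcal{Q}^n)$. Taking $\limsup_{k\to\infty}$ in Step~3 and then $n\to\infty$, the Kolmogorov--Sinai formula gives
\[
h_\mu(\phi^1)\ge h_\mu(\phi^1,\mathcal{Q})=\lim_{n\to\infty}\frac{1}{n}H_\mu(\mathcal{Q}^n)\ge \limsup_{k\to\infty}\frac{1}{t_k}\log\#E_{t_k}.
\]

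\emph{Main obstacle.} The only nonstandard point is Step~2: the witness time $s_0$ of $(t,\e)$-separation for the flow need not be an integer, so landing inside atoms of $\mathcal{Q}^N$ requires using uniform continuity of $\phi$ on $[0,1]\times Y$ to replace $s_0$ by a nearby integer at the cost of shrinking $\e$, and then choosing $\mathrm{diam}(\mathcal{Q})$ correspondingly small in Step~1. Once this bookkeeping is handled, the rest is the textbook Misiurewicz argument together with weak$^*$-continuity of partition entropy on null-boundary partitions.
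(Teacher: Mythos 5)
The paper offers no proof of this lemma: it is quoted from \cite[Proposition 4.3.12]{FH} with a pointer to the classical proof of the variational principle, so your proposal has to be judged on its own. Steps 1--3 are the correct Misiurewicz-type scheme, and you correctly isolate the one nonstandard point (converting $(t,\e)$-separation for the flow into separation by atoms of $\mathcal{Q}^N$ for $\phi^1$). Two bookkeeping remarks there: the requirement is $\diam \mathcal{Q}<\delta$, the modulus coming from uniform continuity of the flow, not $\diam\mathcal{Q}<\e/2$ as written in Step 1 (you acknowledge this in your closing paragraph); and the witness time $s_0$ may lie in $[N,t]$, so you need uniform continuity on $[0,2]\times Y$ rather than $[0,1]\times Y$.

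Step 4, however, contains a genuine error: uniform continuity of the flow does \emph{not} give $\mu_t(f)-\tau_t(f)\to 0$. The continuous-time average $\frac{1}{t}\int_0^t f(\phi^s v)\,ds$ and the discrete average $\frac{1}{N}\sum_{j=0}^{N-1}f(\phi^j v)$ along the same orbit segment need not be close; e.g.\ for $Y=\RR/\ZZ$, $\phi^s(x)=x+s$ and $f(x)=\cos(2\pi x)$, the discrete average at an integer point is $1$ while the continuous one tends to $0$. What is true is that $\mu_t-\int_0^1\phi^u_*\tau_t\,du\to 0$ weakly, so after passing to a further subsequence with $\tau_{t_k}\to\tau$ one gets $\mu=\int_0^1\phi^u_*\tau\,du$, where $\tau$ is only $\phi^1$-invariant and in general $\tau\neq\mu$. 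The standard repair is: run Steps 1--3 with a partition whose boundary is $\tau$-null (chosen after extracting the subsequence) to obtain $h_\tau(\phi^1)\ge\limsup_k\frac{1}{t_k}\log\#E_{t_k}$, and then use affinity of the entropy map together with $h_{\phi^u_*\tau}(\phi^1)=h_\tau(\phi^1)$ (since $\phi^u$ conjugates $\phi^1$ with itself) to conclude $h_\mu(\phi^1)=\int_0^1 h_{\phi^u_*\tau}(\phi^1)\,du=h_\tau(\phi^1)$. With that substitution the argument closes; as written, the passage from $\tau_{t_k}$ to $\mu$ is a gap.
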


\begin{proof}[Proof of Theorem \ref{equi}]
We claim that the set $\{\underline{\dot  c}(0): c\in C(t)\}$ is $(t,\e)$-separated for any $0<\e<\inj(M)/2$.

Indeed, if it were not, then $C(t)$ would contain two closed geodesics $\underline c_1, \underline c_2$ in distinct free-homotopic classes
such that $d(\underline c_1(s), \underline c_2(s))\le \e< \inj(M)/2$ for all $s\in [0,t]$. Define $\lv=\underline{\dot c}_1(0)$ and $\lw=\underline{\dot c}_2(0)$.
We can lift  $\lv, \lw$ to $v,w\in SX$, and $\underline c_1, \underline c_2$ to $c_1,c_2$ respectively, such that $d(c_1(s), c_2(s))\le \e$ for all $s\in [0,t]$.
Moreover, there exist $\c_1,\c_2\in \C$ and $t_1,t_2\in (t-\e,t]$ such that $\c_1v=g^{t_1}v$ and $\c_2w=g^{t_2}w$. Then
\begin{equation*}
\begin{aligned}
d(\c_2^{-1}\c_1 c_1(0), c_2(0))=&d(\c_2^{-1}c_1(t_1), \c_2^{-1}c_2(t_2))=d(c_1(t_1), c_2(t_2))\\
\le &d(c_1(t_1), c_2(t_1))+|t_2-t_1|.
\end{aligned}
\end{equation*}
Hence $d(\c_2^{-1}\c_1 c_1(0), c_2(0))\le 3\e<2 \inj(M)$, which is possible only if $\c_1=\c_2$. Then $c_1$ and $c_2$ are both axes for a common $\c:=\c_1=\c_2$. By the proof of Lemma \ref{injective}, we see that $c_1$ and $c_2$ must be bi-asymptotic and consequently bound a flat strip. It follows that $\underline c_1$ and $\underline c_2$ are free-homotopic. A contradiction, so the claim holds.

Now by Propositions \ref{lower1} and \ref{asymptotic}, we know
\begin{equation*}
\begin{aligned}
\#C(t)\ge  &\frac{\a }{t\nu_t(\lB^\a)}\cdot (\#\C^*(t-2\e^2,\a)-Ke^{\frac{2}{3}ht})\\
\gtrsim  &\frac{\a }{t\nu_t(\lB^\a)}\cdot (e^{-4h\e}e^{ht}m(B_\theta^\a)-Ke^{\frac{2}{3}ht}).
\end{aligned}
\end{equation*}
So $\liminf_{t\to \infty}\frac{1}{t}\log \#C(t)\ge h.$ Applying Lemma \ref{equilemma}, we know any limit measure of $\nu_t$ has entropy equal to $h$, and thus it must be $m$, the unique MME. This proves Theorem \ref{equi}.
\end{proof}

\begin{proposition}\label{sumup}
We have
$$\#C(t)\sim e^{\pm Q\e}\frac{\e}{t}e^{ht}$$
where $Q>0$ is a universal constant depending only on $h$.
\end{proposition}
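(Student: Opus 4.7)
The plan is to combine the upper bound of Proposition \ref{upper}, the lower bound of Proposition \ref{lower1}, the counting estimates of Proposition \ref{asymptotic}, and the equidistribution of Theorem \ref{equi}. All three ingredients are now in place, so this proposition is essentially a bookkeeping exercise in which the constants must be tracked carefully.

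First, for the upper bound I would apply Proposition \ref{upper} with depth $\a=\e$, which gives
\[
\#C(t)\le \frac{\e\,\#\C(t,\e)}{t\,\nu_t(\lB^\e)}.
\]
By Proposition \ref{asymptotic} (second inequality, with $\a=\e$),
\[
\#\C_\theta(t,\e)\lesssim e^{4h\e}(1+4\e)\,e^{ht}\,m(B_\theta^\e).
\]
Since $\theta$ has been chosen so that $m(\partial B_\theta^\e)=0$, Theorem \ref{equi} yields $\nu_t(\lB_\theta^\e)\to m(B_\theta^\e)$ as $t\to\infty$. Dividing through, the $m(B_\theta^\e)$ factors cancel, and using $1+4\e\le e^{4\e}$, one obtains
\[
\#C(t)\lesssim e^{(4h+4)\e}\,\frac{\e}{t}\,e^{ht}.
\]

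Second, for the lower bound I would apply Proposition \ref{lower1} with $\a=\e-4\e^2$, giving
\[
\#C(t)\ge \frac{\a}{t\,\nu_t(\lB^\a)}\bigl(\#\C^*(t-2\e^2,\a)-Ke^{\frac{2}{3}ht}\bigr).
\]
By Proposition \ref{asymptotic} (first inequality, applied at time $t-2\e^2$),
\[
\#\C^*_\theta(t-2\e^2,\a)\gtrsim e^{-4h\e}\,e^{h(t-2\e^2)}\,m(B_\theta^\a).
\]
Since $e^{\frac{2}{3}ht}=o(e^{ht})$, the subtracted term is negligible. Using equidistribution $\nu_t(\lB^\a)\to m(B^\a)$ again, and the elementary bound $\a/\e = 1-4\e\ge e^{-c\e}$ for a universal $c>0$ (valid when $\e$ is small), we get
\[
\#C(t)\gtrsim e^{-(4h+2h\e+c)\e}\,\frac{\e}{t}\,e^{ht}\ge e^{-(4h+c+1)\e}\,\frac{\e}{t}\,e^{ht}
\]
for $\e$ small.

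Combining the two bounds gives $\#C(t)\sim e^{\pm Q\e}\,\frac{\e}{t}e^{ht}$ for $Q$ a constant depending only on $h$ (taking $Q=4h+c+4$ or similar). No step should be delicate: the only thing worth checking is that the continuity-point choice \eqref{e:choice} of $\theta$ makes $\lB_\theta^\e$ and $\lB_\theta^{\e-4\e^2}$ continuity sets of $m$, which is what allows the weak$^*$ limit $\nu_t\to m$ from Theorem \ref{equi} to pass to the evaluation on $\lB_\theta^\a$; this is where the whole chain from closing lemma to equidistribution gets cashed in.
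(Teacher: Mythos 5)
Your proposal is correct and follows essentially the same route as the paper: upper bound via Proposition \ref{upper} and the second inequality of Proposition \ref{asymptotic} with $\a=\e$, lower bound via Proposition \ref{lower1} and the first inequality of Proposition \ref{asymptotic} with $\a=\e-4\e^2$, with Theorem \ref{equi} and the continuity-set choice \eqref{e:choice} supplying $\nu_t(\lB^\a)\to m(B^\a)$. The constant tracking (absorbing $1+4\e$, $1-4\e$, and $e^{-2h\e^2}$ into $e^{\pm Q\e}$) matches the paper's computation.
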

\begin{proof}
By Theorem \ref{equi} and \eqref{e:choice}, we have $\nu_t(\lB^\a)\to \lm(\lB^\a)=m(B^\a)$ for $\a=\e$ and $\a=\e-4\e^2$.

Consider $\a=\e$. By Propositions \ref{upper} and \ref{asymptotic}, we have
$$\#C(t)\lesssim \frac{\e \#\C(t,\e)}{tm(\lB^\e)} \lesssim e^{4h\e}(1+4\e)\frac{\e}{t}e^{ht}.$$

Now consider $\a=\e-4\e^2$. By Propositions \ref{lower1} and \ref{asymptotic},
\begin{equation*}
\begin{aligned}
\#C(t)\gtrsim &\frac{\a }{tm(\lB^\a)}\cdot (\#\C^*(t-2\e^2,\a)-Ke^{\frac{2}{3}ht})\\
\gtrsim &(1-4\e)e^{-4h\e}\frac{\e}{t}e^{-2h\e^2}e^{ht}.
\end{aligned}
\end{equation*}

As $0<\e<\frac{1}{8}$, there exists a universal $Q>0$ depending only on $h$ such that
$\#C(t)\sim e^{\pm Q\e}\frac{\e}{t}e^{ht}$.
\end{proof}

\begin{proof}[Proof of Theorem \ref{margulis}]
The last step of the proof of Theorem \ref{margulis} is to estimate $\#P(t)$ via $\#C(t)$ and a Riemannian sum argument.
Indeed, a verbatim repetition of the proof in \cite[Section 6.2]{CKW2} gives
$$\#P(t)\sim e^{\pm 2(Q+h)\e}\frac{e^{ht}}{ht}.$$
Since $\e>0$ can be arbitrarily small, we get $\#P(t)\sim \frac{e^{ht}}{ht}$ which completes the proof of Theorem \ref{margulis}.
\end{proof}

\ \
\\[-2mm]
\textbf{Acknowledgement.}
This work is supported by NSFC Nos. 12071474 and 11701559.

\section{Appendix: Bernoulli property of Knieper measure}

We prove Theorem \ref{bernoulli} in this appendix. To start, we recall some necessary definitions.
\begin{definition}[Kolmogorov property]
Let $T: X\to X$ be an invertible measure-preserving transformation in a Lebesgue space $(X,\mathcal{B},\mu)$. We say that $T$ has the \emph{Kolmogorov property}, or that $T$ is \emph{Kolmogorov}, if there is a sub-$\sigma$-algebra $\mathcal{K}$ of $\mathcal{B}$ which satisfies
\begin{enumerate}
  \item $\mathcal{K}\subset T\mathcal{K}$,
  \item  $\bigvee_{i=0}^{\infty}T^{i}\mathcal{K}=\mathcal{B}$,
  \item $\bigcap_{i=0}^{\infty}T^{-i}\mathcal{K}=\{\emptyset,X\}$.
\end{enumerate}
\end{definition}

Consider $k\in \NN$ and a probability vector $p=(p_0,\cdots, p_{k-1})$, i.e., $p_i\ge 0, i=0,\cdots, k-1$ and $\sum_{i=0}^{k-1}p_i=1$. A \emph{Bernoulli shift} is a transformation $\sigma$ on the space $\Sigma_k:=\{0,\cdots,k-1\}^{\ZZ}$ given by
$\sigma((x_n)_{n\in \ZZ})=(x_{n+1})_{n\in \ZZ}$ which preserves the Bernoulli measure $p^{\ZZ}$ on $\Sigma_k$.

\begin{definition}[Bernoulli property]
Let $T: X\to X$ be an invertible measure-preserving transformation in a Lebesgue space $(X,\mathcal{B},\mu)$. We say that $T$ has the \emph{Bernoulli property} or is called \emph{Bernoulli} if it is measurably isomorphic to a Bernoulli shift.
\end{definition}
Obviously, the Bernoulli property implies the Kolmogorov property.
\begin{definition}
A measure-preserving flow $(X,\phi,\mu)$ where $\phi=(\phi^t)_{t\in \RR}$ is said to have the \emph{Kolmogorov/Bernoulli property} if for every $t\neq 0$, the invertible measure-preserving transformation $(X, \phi^t, \mu)$ has the Kolmogorov/Bernoulli property.
\end{definition}

For the the geodesic flow on manifolds of nonpositive curvature, Call and Thompson \cite{CT} showed that the unique equilibrium state for certain potential has the Kolmogorov property, based on the earlier work of \cite{Led}. Recently, this result is extended to manifolds without focal points in \cite{CKP2}, via methods developed in \cite{CT}. In particular, we have
\begin{proposition}\cite[Theorem B]{CKP2}
Let $M$ be a rank one closed Riemannian manifold without focal points, then
the Knieper measure $m$ is Kolmogorov.
\end{proposition}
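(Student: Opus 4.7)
The plan is to follow the framework of Call and Thompson \cite{CT}, adapted to the no focal points setting by Chen, Kao and Park \cite{CKP2}. The aim is to show that the Pinsker $\sigma$-algebra $\mathcal{P}$ of $(SM, \phi^1, m)$ is trivial modulo $m$; the same argument applied to every $\phi^t$ with $t \neq 0$ will then yield the Kolmogorov property of the flow.

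I would proceed in three main steps. \emph{Step 1: hyperbolic structure on $\text{Reg}$.} The Knieper measure $m$ is concentrated on the regular set: by \cite{LWW, LLW} one has $m(\text{Sing}) = 0$, because any ergodic invariant probability measure supported on $\text{Sing}$ has entropy strictly less than $h$. On $\text{Reg}$ all transverse Lyapunov exponents are nonzero, so nonuniform hyperbolicity (Pesin) theory provides measurable families of stable and unstable manifolds $W^{s}(v), W^{u}(v)$ tangent to the horospherical foliations. \emph{Step 2: local product structure.} In the flow boxes $B_\theta^\a$ of Section~2.4, the Hopf parametrization $H$ together with the construction \eqref{mme1}--\eqref{mme2} exhibits $m$ locally as a product of the Patterson-Sullivan measures $\mu_p$ on $\bP_\theta$ and $\bF_\theta$ (along the stable and unstable directions, respectively) with Lebesgue measure along the flow. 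In particular, the conditional measures of $m$ on local unstable leaves are non-atomic and equivalent to $\mu_p|_{\bF_\theta}$, and similarly for stable leaves. \emph{Step 3: Hopf--Ledrappier argument.} Since $\mathcal{P}$ is $\phi^t$-invariant for every $t$, and since contraction along $W^s$ (respectively expansion along $W^u$) is uniform on Pesin blocks with uniform rates, a standard Hopf-type argument forces the atoms of $\mathcal{P}$ to be, modulo $m$, saturated by both local stable leaves and local unstable leaves. Combined with the product structure from Step~2 and the ergodicity of $m$ (known from \cite{LLW}), this shows $\mathcal{P}$ is trivial.

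The main obstacle will be making Step~2 rigorous in the no focal points setting. In nonpositive curvature one could appeal to convexity of the distance function between asymptotic geodesics to control the transverse intersections of stable and unstable horospheres inside $H^{-1}(\bP \times \bF \times [0,\a])$; in our setting, by Remark~\ref{lack}, this convexity is unavailable. One must instead use Proposition~\ref{crucial} (uniqueness of the rank one geodesic connecting any $(\xi,\eta) \in \bP_\theta \times \bF_\theta$), Corollary~\ref{equicon} (equicontinuity of Busemann functions on the box), and Lemma~\ref{diameter} to obtain transversality and a well-defined local product. A secondary obstacle is verifying the non-uniform specification / closing property needed for the Hopf argument in Step~3; for this one would invoke the closing lemma established in Section~3 (Lemma~\ref{closing}) in lieu of the CAT$(0)$ shadowing used in \cite{CT}. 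With these geometric inputs in hand, the measure-theoretic machinery of \cite{CT} goes through essentially unchanged, as carried out in \cite{CKP2}.
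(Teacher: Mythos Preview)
The paper does not prove this proposition; it is quoted directly from \cite{CKP2}. What you outline, however, is not the Call--Thompson strategy that \cite{CKP2} actually implements. The approach of \cite{CT}, adapted in \cite{CKP2}, proceeds via Ledrappier's criterion \cite{Led}: one verifies that the \emph{product} flow $\phi\times\phi$ on $SM\times SM$ again satisfies the Climenhaga--Thompson non-uniform specification and expansivity hypotheses, so that $m\times m$ is the unique MME for the product system and hence ergodic; Ledrappier's theorem then yields completely positive entropy, i.e.\ the K-property. Specification enters precisely at this point---to secure uniqueness on the product---and not inside any Hopf-type argument.

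Your Steps 1--3 instead sketch a direct Pesin-style route: show that the Pinsker $\sigma$-algebra is saturated by both stable and unstable leaves, and then use the Patterson--Sullivan local product structure to force triviality. That is a legitimate alternative (and closer in spirit to how the paper afterwards lifts K to Bernoulli via \cite{CH}), but it requires an ingredient you do not name: the identification, for hyperbolic measures of $C^{1+\alpha}$ systems, of the Pinsker algebra with the tail $\sigma$-algebra of the stable (resp.\ unstable) partition. The closing lemma of Section~3 plays no role in such an argument, and your remark that specification is ``needed for the Hopf argument in Step~3'' conflates the two strategies. If you want to pursue your outline as a genuine alternative proof, drop the reference to Lemma~\ref{closing} and instead invoke the Pesin/Ledrappier $\pi$-partition theorem to justify the saturation claim in Step~3; the product structure you describe in Step~2 (which is exactly what the paper uses in Lemma~\ref{covering}) then finishes the job.
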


A classical argument in \cite{OW1} showed that the Kolmogorov property implies Bernoulli property for smooth invariant measures of Anosov flows. The argument was also carried out by Chernov and Haskell \cite{CH} for smooth invariant measures of suspension flows over some nonuniformly hyperbolic maps with singularities. It also works for hyperbolic invariant measures with local product structure. See also \cite{PV, PTV} for an introduction to terminology and ideas of the Kolmogorov-Bernoulli equivalence.

Based on the argument in Chernov and Haskell \cite{CH}, Call and Thompson \cite[Section $7$]{CT} proved that the Knieper measure $m$ is Bernoulli when $M$ has nonpositive curvature. Following \cite{CT}, we show that the Knieper measure $m$ is Bernoulli for manifolds without focal points. The main step is to show the existence of $\e$-regular coverings as follows.

\begin{definition}
A \emph{rectangle} in $SM$ is a measurable set $R\subset SM$, equipped with a distinguished point $z \in R$ with the property that for all points $x, y \in R$ the local weak stable manifold $W_\loc^{0s}(x)$ and the local unstable $W_\loc^{u}(y)$ intersect each other at a single point, denoted by $[x,y]$, which lies in $R$.
\end{definition}
Notice that a rectangle $R$ can be thought of as the Cartesian product of $W_\loc^{0s}(z)\cap R$ and $W_\loc^{u}(z)\cap R$, where a point $y\in R$ is given by $[W_\loc^{0s}(y)\cap W_\loc^{u}(z), W_\loc^{u}(y) \cap W_\loc^{0s}(z)]$.

Given a probability measure $\mu$ on $SM$, there is a natural product measure
$$\mu^p_R:=\mu^u_z\times \tilde\mu^{0s}_z,$$
where $\mu^u_z$ is the conditional measure induced by $\mu$ on $W_\loc^u(z)\cap R$ with respect to the measurable partition of $R$ into local unstable manifolds, and $\tilde\mu^{0s}_z$ is the corresponding factor measure on $W^{0s}_\loc(z)$.

Below is Chernov and Haskell's definition of $\e$-regular coverings.
\begin{definition}\label{coveringdef}
Given any $\e> 0$, we define an \emph{$\e$-regular covering} for a probability measure $\mu$ of $SM$ to be a finite collection of disjoint rectangles $\mathcal{R}=\mathcal{R}_\e$ such that
\begin{enumerate}
  \item $\mu(\cup_{R\in \mathcal{R}}R) > 1-\e$;
  \item Given any two points $x, y\in R \in \mathcal{R}$, which lie in the same local unstable or local weak stable manifold, there is a smooth curve on that manifold which connects $x$
and $y$ and has length less than $100\cdot \text{diam}R$;
  \item For every $R\in \mathcal{R}$ with a distinguished point $z\in R$, the product measure $\mu^p_R:=\mu^u_z\times \tilde\mu^{0s}_z$ satisfies
$$\left|\frac{\mu^p_R(R)}{\mu(R)}-1\right| <\e.$$
Moreover, $R$ contains a subset $G$ with $\mu(G) > (1-\e)\mu(R)$ such that for all $x\in G$,
$$\left|\frac{d\mu^p_R}{d\mu}(x)-1\right| <\e.$$
\end{enumerate}
\end{definition}

\begin{lemma}\label{covering}
For any $\d>0$ and $\e>0$, there exists an $\e$-regular covering of connected rectangles  $\mathcal{R}_{\e}$ for Knieper measure $m$ of $SM$, with $\diam(R)<\d$ for any $R\in \mathcal{R}_{\e}$.
\end{lemma}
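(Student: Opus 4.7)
The plan is to build the desired $\e$-regular covering out of the local flow boxes $B_\theta^\a$ from Section $2.4$, exploiting the explicit product structure of the Knieper measure given by \eqref{mme1}--\eqref{mme2}. Since $m$ is hyperbolic and ergodic on $\mathrm{Reg}$, I first fix a compact set $K\subset \mathrm{Reg}$ with $m(K)>1-\e/2$ on which the local weak stable and local unstable leaves vary continuously and the local product map $[\cdot,\cdot]$ is well defined and uniformly biLipschitz. For each base point $v_0\in K$ and for $\theta$ sufficiently small (depending on $\d$), Proposition \ref{crucial} together with Lemma \ref{diameter} produces a flow box $B_\theta^{\a}(v_0)$ of diameter less than $\d$ whose $(\bP,\bF,s)$-coordinates identify it canonically with $\bP_\theta\times \bF_\theta \times [0,\a]$; after intersecting with $\mathrm{pr}^{-1}(K)$ and removing the boundary sliver (which has $m$-measure zero by \eqref{e:choice}), this intersection is a rectangle $R(v_0)$ in the sense above, with distinguished point $v_0$.

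Next I would run a Besicovitch/Vitali-type argument: cover $K$ by such rectangles, extract a finite disjoint subfamily $\{R_i\}$ with $m(\bigcup_iR_i)>1-\e$, and declare $\mathcal{R}_{\e}:=\{R_i\}$. The connectedness and length condition (2) of Definition \ref{coveringdef} follows because within $B_\theta^\a$ the local weak stable and local unstable leaves are connected graph-like disks over $\bF$ and $\bP$ respectively, and by Lemma \ref{diameter}(3) and the uniform continuity of $W^{0s}_\loc$ and $W^u_\loc$ on $K$, their intrinsic diameters along the leaf are comparable to the ambient diameter; shrinking $\theta$ if necessary makes the comparison constant at most $100$.

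The heart of the proof, and the step I expect to be hardest, is verifying the product-measure estimate (3). The explicit formulas \eqref{mme1}--\eqref{mme2} identify the Knieper measure on $R_i$ with
\[
dm \;=\; e^{h\beta_p(\xi,\eta)}\,d\mu_p(\xi)\,d\mu_p(\eta)\,ds,
\]
while $m^p_{R_i}$ is the product of the $\xi$-marginal and the $(\eta,s)$-marginal through the distinguished point $z_i$. Thus the Radon--Nikodym density $dm^p_{R_i}/dm$ is an exponential of a combination of Gromov products and Busemann functions, evaluated at $(\xi,\eta)$ versus at $z_i$. The equicontinuity of Busemann functions (Lemma \ref{equicon1} and Corollary \ref{equicon}), combined with the diameter bound $\diam\pi B_\theta^\a<4\e$ and the Hopf-coordinate depth $\a<\d/8$, bounds this density by $e^{\pm O(\e)}$ on each sufficiently small rectangle. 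Choosing $\theta,\a$ small enough that this exponential lies in $(1-\e/2,1+\e/2)$ gives the global estimate $|m^p_{R_i}(R_i)/m(R_i)-1|<\e$.

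For the pointwise estimate, I would define $G_i\subset R_i$ to be the set where $|dm^p_{R_i}/dm-1|<\e$. The computation above shows this set is exactly the set of $x\in R_i$ whose $(\xi(x),\eta(x),s(x))$-coordinates differ little from those of $z_i$ in the Busemann/Gromov-product sense; making $\diam R_i$ small forces almost every point of $R_i$ into $G_i$, so $m(G_i)>(1-\e)m(R_i)$ after a final shrinking of $\theta$ and $\a$. The main technical obstacle throughout is that, in contrast to the CAT$(0)$ setting, Busemann functions are only continuous rather than Lipschitz; all quantitative estimates must therefore be extracted from the equicontinuity results of Section $2$ rather than from any explicit modulus, which forces a careful separate shrinking of $\theta$ for each prescribed $\e$ before the covering is assembled.
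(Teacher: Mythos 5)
Your verification of condition (3) is essentially the paper's argument: both proofs reduce to the explicit density $dm=e^{h\beta_p(\xi,\eta)}\,d\mu_p(\xi)\,d\mu_p(\eta)\,ds$ and the (uniform) continuity of the Gromov product, so that on a rectangle of small diameter the ratio $dm^p_R/dm$ is $e^{\pm O(\e)}$; the paper organizes this as an estimate on the Jacobian of the weak stable holonomy between conditionals on unstable leaves, but the content is the same. (Minor slip: the unstable conditional lives in the $\eta=w^{+}$ coordinate and the weak stable factor in the $(\xi,s)$ coordinates, not the other way around as you wrote.)

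The genuine gap is in how you produce the finite \emph{disjoint} family of rectangles covering measure $>1-\e$. You invoke a ``Besicovitch/Vitali-type argument,'' but neither theorem applies off the shelf here: Besicovitch requires metric balls in a directionally limited space, and Vitali requires a doubling measure, whereas your covering sets are Hopf-coordinate flow boxes and $m$ is not known to be doubling. Without one of these, a finite disjoint subfamily extracted from a cover need not retain measure close to $1$. The obvious repair---noting that an intersection of two Hopf boxes is again a product set and carving the overlaps into disjoint product pieces---destroys exactly the properties you need elsewhere: the pieces need not be connected, need not contain a usable distinguished point, and the comparability constant in condition (2) is no longer controlled. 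The paper avoids this entirely by using Pesin theory (the hyperbolicity of $m$, which follows from $m(\mathrm{Reg})=1$, plus the rectangle constructions of Pesin and Barreira--Pesin) to obtain directly a finite disjoint collection of small rectangles covering a Pesin set of measure $>1-\e$; your Hopf boxes then only enter in the verification of condition (3). To complete your version you would need either to verify a covering-theorem hypothesis for $m$ and the flow boxes, or to substitute the Pesin-theoretic construction for this step.
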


\begin{proof}
Because $m(\text{Reg}) = 1$, $m$ has no zero Lyapunov exponent except in the flow direction. Thus $m$ is a hyperbolic measure. By \cite[Lemma 8.3]{Pe1}, \cite[Lemma 1.8]{Pe2} and \cite[Lemma 9.5.7]{BP}, for a hyperbolic measure $m$, we can find a finite
collection of disjoint rectangles $R$ covering a Pesin set for the Lyapunov regular
points for $m$. Choose a Pesin set with $m$-measure at
least $1-\e$ gives the first condition in Definition \ref{coveringdef}. Moreover, the rectangles $R$ can be chosen such that $\diam R<\d$. Since the metrics on local unstable or local weak stable manifold are uniformly equivalent to the Riemannian distance if $\d$ is small enough, the second condition in Definition \ref{coveringdef} is also satisfied.

To verify the third condition in Definition \ref{coveringdef}, we use the local production structure of $m$ provided by \eqref{mme1} and \eqref{mme2}. If $v\in \text{Reg}$, $P^{-1}(v^-, v^+)$ consists of the unit vectors tangent to geodesic $c_v$, and thus in the term $\text{Vol}\{\pi(P^{-1}(\xi,\eta)\cap A)$ in \eqref{mme2}, $\text{Vol}$ just becomes the length along the geodesic $c_v$. Since $m(\text{Reg}) = 1$, we in fact have for $m \ae v$,
$$dm(v)=f(\xi, \eta)d\mu_p(\xi)d\mu_p(\eta)dt,$$
where $P(v)=(\xi,\eta)$ and the density function $f(\xi,\eta):=e^{h\cdot \beta_{p}(\xi,\eta)}$. By Corollary \ref{continuous}, $f$ is continuous.

Let $R$ be a rectangle of sufficiently small diameter constructed above. We lift all objects to the universal cover $X$. Take a lift of $R$, which is still denoted by $R$. Since the local weak stable
and local unstable manifolds at $v$ intersect transversally if and only $v\in \text{Reg}$, it follows
that $R\subset \text{Reg}$. Notice that for $x\in \text{Reg}$, there exists a continuous map $\varphi_x: W_\loc^u(x)\to \pX$ given by $\varphi_x(v)=v^+$ where $v\in W_\loc^u(x)$. The conditional measure $\mu^u_x$ on $R\cap W_\loc^u(x)$ is given by
\begin{equation}\label{conditional}
d\mu^u_x(v) = \frac{f(x^-,\phi_xv) d\mu_p(\phi_xv)}{\int_{R\cap W_\loc^u(x)}f(x^-,\phi_xv) d\mu_p(\phi_xv)}
\end{equation}
and $\int d m(v)=c\int d\mu^u_x(v) d\mu_p(x^-)dt$ for some normalization constant $c>0$.

Given two points $x, y\in R$, the local weak stable holonomy map $\pi^{0s}_{xy}: W_\loc^{u}(x)\cap R \to W_\loc^{u}(y)\cap R$ is defined by
$$\pi_{xy}^{0s}(w):=[w,y]=W_\loc^{u}(y)\cap W_\loc^{0s}(w), \quad w\in W_\loc^{u}(x).$$
Note that $\phi_x(w)=\phi_{y}(\pi^{0s}_{xy}w):=\eta_w$. By \eqref{conditional}, the Jacobian of the holonomy map \begin{equation*}
\begin{aligned}
\left|\frac{d(\pi_{yx}^{0s})_*\mu^u_y}{d\mu^u_x}(w)\right|=\left|\frac{f(y^-,\eta_w)}{f(x^-,\eta_w)}\cdot\frac{\int_{R\cap W_\loc^u(x)}f(x^-,\eta_w) d\mu_p(\eta_w)}{\int_{R\cap W_\loc^u(x)}f(y^-,\eta_w) d\mu_p(\eta_w)}\right|.
\end{aligned}
\end{equation*}
Since $f$ is uniformly continuous and bounded away from $0$, by taking $\text{diam\ }R<\d$ small enough, we have that
$\left|\frac{d(\pi_{yx}^{0s})_*\mu^u_y}{d\mu^u_x}(w)-1\right| <\e$ for $x,y\in R$.
By definition of conditional measures (see Lemma 2.12 in \cite{CPZ}), this implies the third condition in Definition \ref{coveringdef}.
\end{proof}

\begin{proof}[Proof of Theorem \ref{bernoulli}]
As in \cite{CH}, if a smooth invariant measure or a hyperbolic measure $\mu$ is Kolmogorov and there exists an $\e$-regular covering
with non-atomic conditionals for $\mu$ for any $\e > 0$, then any finite partition $\xi$ of the
phase space with piecewise smooth boundary and a constant $C > 0$ such that
$\mu(B(\partial \xi, \d)) \le C\d$ for all $\d > 0$ is \emph{Very Weak Bernoulli} (see e.g. \cite{PTV, PV} for definition). A refining sequence of such
partitions with diameter going to $0$ suffices to conclude the Bernoulli property for
$\mu$. Such a sequence of partitions exist in this setting by \cite[Lemma 4.1]{OW2}.

As for Knieper measure $m$, we have showed in Lemma \ref{covering} that $\e$-regular coverings for $m$ exist for all $\e > 0$. Notice that the conditional measure of $m$ is non-atomic, since $m$ is a hyperbolic ergodic measure with positive entropy. We conclude that $m$ is Bernoulli.
\end{proof}

\end{document}